\documentclass[11pt]{article}
\usepackage[margin=2.5cm]{geometry}
\usepackage{pdflscape}
\usepackage{amsfonts}
\usepackage{amsmath}
\usepackage{amsthm}
\usepackage{amssymb}
\usepackage{amsmath,amscd}
\usepackage{graphicx}
\usepackage[rightcaption]{sidecap}
\usepackage{enumerate}
\usepackage{subcaption}
\graphicspath{ {./Figure/} }
\usepackage[pdffitwindow=false,
colorlinks=true,linkcolor=black,urlcolor=black,citecolor=black]{hyperref}
\usepackage{authblk}

\newcommand*{\email}[1]{%
    \normalsize\href{mailto:#1}{#1}\par
    }

\newtheorem{theorem}{Theorem}[section]
\newtheorem{lemma}[theorem]{Lemma}
\theoremstyle{definition}
\newtheorem{definition}[theorem]{Definition}

\newtheorem{proposition}[theorem]{Proposition}
\newtheorem{corollary}[theorem]{Corollary}
\newtheorem{remark}[theorem]{Remark}
\numberwithin{equation}{section}


\newcommand{\spin}{\mathrm{Spin}}

\newcommand{\trace}{\mathrm{tr}}

\newcommand{\ricci}{\mathrm{Ric}}

\begin{document}
\title {Positive Einstein metrics with $\mathbb{S}^{4m+3}$ as principal orbit}
\author[]{Hanci Chi}
\affil[]{Department of Foundational Mathematics, Xi’an Jiaotong-Liverpool University\\
\email{hanci.chi@xjtlu.edu.cn}}

\date{\today}

\maketitle
\abstract
We prove that there exists at least one positive Einstein metric on $\mathbb{HP}^{m+1}\sharp \overline{\mathbb{HP}}^{m+1}$ for $m\geq 2$. Based on the existence of the first Einstein metric, we give a criterion to check the existence of a second Einstein metric on $\mathbb{HP}^{m+1}\sharp \overline{\mathbb{HP}}^{m+1}$. We also investigate the existence of cohomogeneity one positive Einstein metrics on $\mathbb{S}^{4m+4}$ and prove the existence of a non-standard Einstein metric on $\mathbb{S}^8$.

\section{Introduction}
A Riemannian manifold $(M,g)$ is \emph{Einstein} if its Ricci curvature is a constant multiple of $g$:
$$
\ricci(g)=\Lambda g.
$$
The metric $g$ is then called an \emph{Einstein metric} and $\Lambda$ is the \emph{Einstein constant}. Depending on the sign of $\Lambda$, we call $g$ a positive Einstein ($\Lambda>0$) metric, a negative Einstein ($\Lambda<0$) metric or a Ricci-flat ($\Lambda=0$) metric. A positive Einstein manifold is compact by Myers' Theorem \cite{myers_riemannian_1941}.

In this article, we investigate the existence of positive Einstein metrics of \emph{cohomogeneity one}. A Riemannian manifold $(M, g)$ is of cohomogeneity one if a Lie Group $\mathsf{G}$ acts isometrically on $M$ such that the principal orbit $\mathsf{G}/\mathsf{K}$ is of codimension one. The first example of an inhomogeneous positive Einstein metric was constructed in \cite{page_compact_1978}. The metric is defined on $\mathbb{CP}^{2}\sharp \overline{\mathbb{CP}}^{2}$ and is of cohomogeneity one. The result was later generalized in \cite{berard-bergery_sur_1982}, \cite{koiso_non-homogeneous_1986}, \cite{sakane_examples_1986}, \cite{Page_1987}, \cite{koiso_non-homogeneous_1988} and \cite{wang_einstein_1998}. A common feature shared by positive Einstein metrics constructed in this series of works is the principal orbits being principal $U(1)$-bundles over either a Fano manifold or a product of Fano manifolds. From this perspective, one can view the Einstein metric on $\mathbb{HP}^{2}\sharp \overline{\mathbb{HP}}^{2}$ in \cite{bohm_inhomogeneous_1998} as another type of generalization to the Page's metric, whose principal orbit is a principal $Sp(1)$-bundle over $\mathbb{HP}^1$. 

A natural question arises whether there exists a positive Einstein metric of cohomogeneity one on $\mathbb{HP}^{m+1}\sharp \overline{\mathbb{HP}}^{m+1}$ with $m\geq 2$, where the principal orbit is the total space of the quaternionic Hopf fibration formed by the following group triple:
\begin{equation}
\label{eqn:_group triple}
(\mathsf{K},\mathsf{H},\mathsf{G})=(Sp(m)\Delta Sp(1), Sp(m)Sp(1)Sp(1), Sp(m+1)Sp(1)).
\end{equation}
The condition of being $\mathsf{G}$-invariant reduces the Einstein equations to an ODE system defined on the 1-dimensional orbit space. The solution takes the form of 
$g=dt^2+g(t),
$
where $g(t)$ is a $\mathsf{G}$-invariant metric on $\mathbb{S}^{4m+3}$ for each $t$. One looks for a $g(t)$ that is defined on a closed interval $[0, T]$ with an initial condition and a terminal condition. If $g(t)$ collapses to the quaternionic-K\"ahler metric on the singular orbit $\mathbb{HP}^m$ at $t=0$ and $t=T$, then $g$ defines a positive Einstein metric on the connected sum $\mathbb{HP}^{m+1}\sharp \overline{\mathbb{HP}}^{m+1}$, or equivalently, an $\mathbb{S}^4$-bundle over $\mathbb{HP}^m$. It has been conjectured that such an Einstein metric exists on $\mathbb{HP}^{m+1}\sharp \overline{\mathbb{HP}}^{m+1}$ for all $m\geq 2$, as indicated by numerical evidence provided in \cite{page_einstein_1986}, \cite{bohm_inhomogeneous_1998} and \cite{dancer_cohomogeneity_2013}.

Some well-known Einstein metrics are realized as integral curves to the cohomogeneity one Einstein equation. For example, the standard sphere metric, the sine cone over Jensen's sphere, and the quaternionic-K\"ahler metric on $\mathbb{HP}^{m+1}$ are represented by integral curves to the cohomogeneity one system. Furthermore, the cone solution is an attractor to the system. It was realized in \cite{bohm_inhomogeneous_1998} that the winding of integral curves around the cone solution plays an important role in the existence problem described above. To investigate the winding, one studies a quantity (denoted as $\sharp C_w(\bar{h})$ in \cite{bohm_inhomogeneous_1998}) that is assigned to each local solution that does not globally define a complete Einstein metric on $\mathbb{HP}^{m+1}\sharp \overline{\mathbb{HP}}^{m+1}$. From the point of view of geometry, the quantity records the number of times that the principal orbit becomes isoparametric while its mean curvature remains positive. In general, an estimate for $\sharp C_w(\bar{h})$ can be obtained from the linearization along the cone solution. For $m=1$, the estimate is good enough to prove the global existence. This is not the case, however, if $m\geq 2$. For higher dimensional cases, it is from the global analysis of the system that we obtain a further estimate for $\sharp C_w(\bar{h})$ and we prove the following existence theorem.
\begin{theorem}
\label{thm: first Einstein metric}
On each $\mathbb{HP}^{m+1}\sharp \overline{\mathbb{HP}}^{m+1}$ with $m\geq 2$, there exists at least one positive Einstein metric with $\mathsf{G}/\mathsf{K}=\mathbb{S}^{4m+3}$ as its principal orbit.
\end{theorem}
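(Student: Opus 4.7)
The plan is to realize the Einstein metrics as integral curves of the cohomogeneity one Einstein ODE attached to the group triple \eqref{eqn:_group triple}, via a shooting argument that tracks how trajectories wind around the Jensen cone solution. I would first write down the reduced system explicitly in terms of the two natural summands of the quaternionic Hopf fibration $\mathbb{S}^3 \to \mathbb{S}^{4m+3} \to \mathbb{HP}^m$, one $4m$-dimensional (horizontal) and one three-dimensional (vertical), together with the radial variable $t$. The smoothness condition at $t=0$ forces each admissible integral curve to emanate from the singular orbit $\mathbb{HP}^m$ along a one-parameter family of local solutions $\{\gamma_{\bar h}\}$, indexed by the ratio $\bar h$ of the two metric components at the moment of collapse. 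The aim is to find $\bar h=\bar h_{*}$ such that $\gamma_{\bar h_{*}}$ exists on a closed interval $[0,T]$ and terminates with the same quaternionic-K\"ahler collapse at $t=T$, thus producing a complete Einstein metric on the $\mathbb{S}^4$-bundle $\mathbb{HP}^{m+1}\sharp\overline{\mathbb{HP}}^{m+1}$.

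Next I would set up B\"ohm's framework: after an appropriate rescaling one obtains an autonomous flow on a compactified phase space in which the Jensen cone is a stationary attractor on the Einstein invariant locus. Any local solution $\gamma_{\bar h}$ that does not globally close up on the connected sum will spiral into this attractor, and B\"ohm's quantity $\sharp C_w(\bar h)$ records the number of times the principal orbit becomes isoparametric with positive mean curvature along the spiral. A linearization at the cone fixed point bounds $\sharp C_w(\bar h)$ for $\bar h$ near the cone parameter $\bar h_c$; for $m=1$ this bound is sharp enough to force the parity needed for an intermediate value argument, but for $m\geq 2$ it is too weak. I would therefore search for monotone or conserved functionals of the full nonlinear Einstein system — natural candidates include a Hamiltonian/energy built from the Ricci and mean curvature data, and functionals measuring proximity to the cone that remain meaningful far from it — in order to upgrade the local linearized bound to a bound on $\sharp C_w(\bar h)$ that is uniform over the whole admissible parameter interval.

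With such a uniform winding estimate secured, I would run the shooting argument on the parameter $\bar h$. On one side of the admissible interval the local solution is explicit or near-explicit (e.g.\ close to the quaternionic-K\"ahler or to the round sphere value) and its terminal behavior is easy to track; on the other side the trajectory winds close to the cone and its asymptotics are pinned down by the new bound. Choosing a terminal invariant such as the signed ratio of the collapsing metric components at the second singular time, and showing that it continuously takes values of opposite sign across the interval, an intermediate value argument then produces a $\bar h_{*}$ for which $\gamma_{\bar h_{*}}$ closes up correctly, giving the desired Einstein metric.

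The hardest step will be the global analysis powering the refined estimate on $\sharp C_w(\bar h)$. Since the linearization at the cone is insufficient for $m\geq 2$, one must exploit the nonlinear structure: finding functionals that are monotone across the whole relevant parameter range, excluding escape to non-singular-orbit asymptotes, and controlling the trajectory during its potentially long spiral phase are all delicate. Matching these dynamical estimates to the rigid smoothness conditions at both singular orbits so that the shooting argument yields a genuine Einstein metric, rather than merely a limiting candidate, is the technical heart of what I expect to have to carry out.
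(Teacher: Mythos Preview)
Your overall framework matches the paper's: parametrize local solutions $\gamma_{s_1}$ emanating from $p_0^+$, assign to each the winding count $\sharp C(\gamma_{s_1})$, note that $\sharp C(\gamma_0)=0$ for the quaternionic-K\"ahler solution, and invoke B\"ohm's Lemma~4.4 once you can exhibit some $s_1$ with $\sharp C(\gamma_{s_1})\geq 1$. The gap is in how you propose to obtain that last estimate.

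You plan to push $\bar h$ toward the cone value and control the spiral using functionals ``measuring proximity to the cone.'' The paper explicitly explains (Remark~\ref{rem: motivation for S}) why this route is unlikely to succeed for $m\geq 2$: the cone point $p_2^+$ is a stable \emph{node} in the Ricci-flat subsystem, not a focus, so there is no reason the winding angle of $\gamma_{s_1}$ around $\Phi$ should be monotone, or even bounded below by $1$, as $s_1\to\infty$. Indeed, in Section~\ref{sec: Limiting Winding Angle} the paper computes the limiting winding angle $\theta_*$ and shows that for $m\geq 2$ one numerically has $\theta_*<\pi$, i.e.\ $\lim_{s_1\to\infty}\sharp C(\gamma_{s_1})=0$. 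So the two ends of your shooting interval would both give winding count $0$, and no intermediate-value argument is available.

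What the paper does instead is construct an explicit compact polynomial trapping region
\[
\mathcal{S}=\mathcal{E}\cap\{X_1-X_2\geq 0\}\cap\{X_2\geq 0\}\cap\{A\geq 0\}\cap\{B\geq 0\}\cap\{P\geq 0\},
\]
with $A,B,P$ specific cubic and quartic polynomials, such that (i) $\gamma_{s_1}$ enters $\mathcal{S}$ for $s_1$ in a \emph{bounded} interval $\bigl(\tfrac{3}{m-1},\tfrac{9(5m+3)(4m^2+4m+3)}{n^2(2m+3)(m-1)}\bigr)$, and (ii) the only exit face is $\{X_1-X_2=0\}\cap\{H\geq 0\}$, forcing $\sharp C(\gamma_{s_1})\geq 1$ there. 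The hard step is Proposition~\ref{prop: Q is nonnegative}: verifying that a fourth polynomial $Q$ (which controls $P'$ on $\{P=0\}$) is non-negative on $\mathcal{S}\cap\{P=0\}$. This is done by slicing $\mathcal{S}\cap\{P=0\}$ by the ratio $\kappa=X_2/X_1$, reducing to quadratics $\tilde Q_{m,\kappa}$ and $\tilde B_{m,\kappa}$ in $Z/Y$, and checking via an explicit resultant computation that they share no root for $m>1$; the degenerate case $m=1$ (where $\mathcal{S}$ collapses onto the $\spin(7)$ curve) serves as the base of this continuity argument. None of this is captured by ``monotone or conserved functionals built from Ricci and mean curvature data''; the polynomials $A,B,P,Q$ are ad hoc barriers whose discovery and verification are the technical heart of the proof.
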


Numerical studies in \cite{bohm_inhomogeneous_1998} and \cite{dancer_cohomogeneity_2013} indicate that there exists another Einstein metric on $\mathbb{HP}^{m+1}\sharp \overline{\mathbb{HP}}^{m+1}$ with $m\geq 2$. Based on Theorem \ref{thm: first Einstein metric}, an estimate for $\sharp C_w(\bar{h})$ in a limiting subsystem (essentially obtained from the linearization along the cone solution) helps us propose a criterion to check the existence of the second Einstein metric. Let $n$ be the dimension of $\mathsf{G}/\mathsf{K}$. Such a criterion only depends on $n$ (or $m$). 
\begin{theorem}
\label{thm: second Einstein metric}
Let $\theta_\Psi$ be the solution to the following initial value problem:
\begin{equation}
\label{eqn:_determine existence}
\frac{d\theta}{d\eta}=\frac{n-1}{2n}\tanh\left(\frac{\eta}{n}\right)\sin(2\theta)+\frac{2}{n}\sqrt{\frac{(2m+1)(2m+2)(2m+3)}{(2m+3)^2+2m}},\quad \theta(0)=0.
\end{equation}
Let $\Omega=\lim\limits_{\eta\to \infty}\theta_\Psi$. For $m\geq 2$, there exist at least two positive Einstein metrics on $\mathbb{HP}^{m+1}\sharp \overline{\mathbb{HP}}^{m+1}$ if $\Omega < \frac{3\pi}{4}$.
\end{theorem}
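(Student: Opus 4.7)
The plan is to refine the winding analysis underlying Theorem~\ref{thm: first Einstein metric} by passing to a limiting subsystem of the cohomogeneity one Einstein equations and tracking the angular motion of trajectories around the cone attractor; the ODE~(\ref{eqn:_determine existence}) will emerge as the angular equation in suitable polar coordinates centered at the cone, and the criterion $\Omega<\frac{3\pi}{4}$ is what this estimate yields once translated back into the integer count $\sharp C_w(\bar{h})$.

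First I would work with the one-parameter family of local solutions emanating from the smooth singular orbit $\mathbb{HP}^m$, parametrized by the scalar $\bar{h}$ used in \cite{bohm_inhomogeneous_1998}, and let $\bar{h}_1$ denote the value producing the Einstein metric of Theorem~\ref{thm: first Einstein metric}. The integer $\sharp C_w(\bar{h})$ counts the number of times the principal orbit becomes isoparametric with positive mean curvature, which in phase-plane language is the number of half-revolutions that the trajectory makes around the cone solution. A second complete Einstein metric will follow from an intermediate value argument on the interval $(\bar{h}_1,\infty)$ together with the closing-up condition on the opposite singular orbit, once one shows that $\limsup_{\bar{h}\to\infty}\sharp C_w(\bar{h})>\sharp C_w(\bar{h}_1)$ and that the extra half-revolution persists under perturbation.

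Next I would rescale the Einstein system by $\bar{h}$ and extract a limiting subsystem in which the cone solution is still a hyperbolic attractor with a complex conjugate pair of eigenvalues. Introducing polar-type coordinates $(r,\theta)$ around the cone, one reparametrizes the radial variable as $r=r_0\,\mathrm{sech}(\eta/n)$; this substitution produces the factor $\tanh(\eta/n)$ in~(\ref{eqn:_determine existence}) and converts the angular equation into the stated form. The constant $\frac{2}{n}\sqrt{\frac{(2m+1)(2m+2)(2m+3)}{(2m+3)^2+2m}}$ is the modulus of the imaginary part of the cone eigenvalues, and the term $\frac{n-1}{2n}\tanh(\eta/n)\sin(2\theta)$ records the anisotropy of the approach along the stable manifold; the initial condition $\theta(0)=0$ is fixed by the launching direction of the limiting trajectory from the singular orbit. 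The limit $\Omega=\lim_{\eta\to\infty}\theta_\Psi$ is therefore the total angular displacement accumulated by the limiting solution.

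Finally I would compare $\Omega$ with the angular displacement required to accumulate a second isoparametric crossing beyond the one produced at $\bar{h}_1$; the threshold works out to $3\pi/4$ from the geometric placement of the isoparametric and mean-curvature loci relative to the cone direction. Combined with the continuity of $\sharp C_w(\bar{h})$ away from the discrete set of $\bar{h}$ producing complete Einstein metrics, the inequality $\Omega<\frac{3\pi}{4}$ forces a second closing-up value $\bar{h}_2>\bar{h}_1$. The principal obstacle is the rigorous passage to the limiting subsystem: one must control both the degeneration of the full ODE as $\bar{h}\to\infty$ over the finite stretch of time where the winding accumulates, and the non-hyperbolic part of the approach to the cone, so that no winding is lost when perturbing back to finite $\bar{h}$. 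Equally delicate is certifying that the additional half-revolution predicted by~(\ref{eqn:_determine existence}) corresponds to a \emph{complete} Einstein metric and not merely to a trajectory that skims the closing-up condition on $\mathbb{HP}^m$.
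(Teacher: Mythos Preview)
Your proposal has the central inequality running in the wrong direction, and this is not a cosmetic slip: it inverts the entire mechanism of the proof. You claim that the second Einstein metric will follow from $\limsup_{\bar h\to\infty}\sharp C_w(\bar h)>\sharp C_w(\bar h_1)$, i.e.\ from the limiting trajectory winding \emph{more} than the one producing the first metric. In the paper the logic is the opposite. From the compact set $\mathcal{S}$ built in the proof of Theorem~\ref{thm: first Einstein metric} one already knows $\sharp C(\gamma_{s_1})\ge 1$ for $s_1$ in a bounded interval above $s_\star$. The hypothesis $\Omega<\tfrac{3\pi}{4}$ is used to show that the limiting winding is \emph{small}: it forces the midpoint angle $\theta_*$ of the heterocline $\Pi$ emanating from $A_0^+$ to satisfy $\theta_*<\pi$, whence $\lim_{s_1\to\infty}\sharp C(\gamma_{s_1})=0$. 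The drop from $\ge 1$ to $0$ is what triggers B\"ohm's Lemma~4.4 and produces $s_{\star\star}$.

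Two further points show that the picture you are working from is off. First, the cone point $p_2^+$ is a stable \emph{node} in the Ricci-flat subsystem---its eigenvalues $\delta_1,\delta_2$ are real for every $m\ge1$---so the constant in \eqref{eqn:_determine existence} is not ``the modulus of the imaginary part of the cone eigenvalues''; it is $2\sqrt{(2m+2)(2m+3)}\,z_0$, coming directly from the $\theta$-equation of the restricted system \eqref{eqn:_rotational subsystem} on $\{r=0\}$, where $H=-\tanh(\eta/n)$ is obtained by solving the $H$-equation explicitly (not by a reparametrization of $r$). Second, the solution $\theta_\Psi$ of \eqref{eqn:_determine existence} is \emph{not} the limiting trajectory of the $\gamma_{s_1}$. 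The limiting trajectory is $\Pi$, which starts at the critical point $A_0^+$; the curve $\Psi$ is a separate integral curve launched from the symmetric point $(H,\theta)=(0,0)$ and used as a \emph{barrier}. The threshold $\tfrac{3\pi}{4}$ arises because $b_1^-\in(\tfrac{\pi}{2},\tfrac{3\pi}{4})$: the inequality $\Omega<\tfrac{3\pi}{4}$ forces $\Psi$ to converge to $B_1^-$, and then the translate $\Psi+(0,\pi)$ traps $\Pi$ so that $\theta_*<\pi$. Without this barrier-and-symmetry argument there is no way to pass from $\Omega$ to a statement about $\theta_*$, and without the correct sign of the $\sharp C$ jump there is no way to conclude existence.
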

The upper bound for $\Omega$ in Theorem \ref{thm: second Einstein metric} is not sharp. Although it is difficult to solve the initial value problem \eqref{eqn:_determine existence} explicitly, one can use the Runge--Kutta 4-th order algorithm to approximate $\Omega$. Since the RHS of \eqref{eqn:_determine existence} does not vanish at $\eta=0$, the initial Runge--Kutta step is well-defined. Our numerical study shows that $\Omega< \frac{3\pi}{4}$ for integers $m\in[2,100]$.

We also look into the case where $\mathsf{G}/\mathsf{K}$ completely collapses at two ends of a compact manifold. In that case, the cohomogeneity one space is $\mathbb{S}^{4m+4}$. No new Einstein metric is found on $\mathbb{S}^{4m+4}$ for $m\geq 2$. For $m=1$, however, we obtained a non-standard positive Einstein metric on $\mathbb{S}^8$. Such a metric is inhomogeneous by the classification in \cite{ziller_homogeneous_1982}.
\begin{theorem}
\label{thm: 8-sphere}
There exists a non-standard $Sp(2)Sp(1)$-invariant positive Einstein metric $\hat{g}_{\mathbb{S}^8}$ on $\mathbb{S}^{8}$.
\end{theorem}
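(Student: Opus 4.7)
The plan is to construct $\hat g_{\mathbb{S}^8}$ as an integral curve of the cohomogeneity one Einstein system that realizes a second smooth closure of the principal orbit $\mathsf{G}/\mathsf{K}=\mathbb{S}^7$ to a single point, beyond the one furnished by the round metric. The strategy parallels the shooting-and-winding framework developed for Theorem~\ref{thm: first Einstein metric}, but with the boundary conditions replaced so that both endpoints correspond to a complete collapse of the principal orbit rather than to $\mathbb{HP}^m$.

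First, I would write the Einstein ODE in terms of the three metric functions given by the isotropy decomposition of $\mathsf{K}$ on $\mathsf{G}/\mathsf{K}$ and characterize the local analytic solutions near a pole at which the full $\mathbb{S}^7$ collapses smoothly. The linearized model at such a pole is $\mathbb{R}^8$ with the standard linear $Sp(2)Sp(1)$-action, which forces the two metric functions parametrizing the $\mathbb{HP}^1$-direction to match and determines their initial derivatives, while the metric function parametrizing the Hopf $\mathbb{S}^3$-direction contributes an additional free shape parameter $s$. After quotienting by translation in $t$ and overall rescaling, this yields a one-parameter family of local smooth solutions emanating from the pole, with a distinguished value $s_0$ corresponding to the round metric.

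Next, I would follow each local solution globally. Applying the phase-variable formalism used in Theorems~\ref{thm: first Einstein metric} and \ref{thm: second Einstein metric}, I monitor the winding of the trajectory around the cone attractor and define a continuous terminal-closure function $\Phi(s)$ whose zeros correspond to smooth closures of the principal orbit. The existence of a non-standard Einstein metric then reduces to finding $s \neq s_0$ with $\Phi(s)=0$, which I would establish by an intermediate value argument on the $s$-line.

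The hardest step will be the global analysis of trajectories away from $s_0$: I must rule out that every non-round trajectory either escapes in finite time, asymptotes to the cone attractor, or fails to meet the singular locus with the correct linearization. Combining the $\sharp C_w(\bar h)$ estimates from the linearization along the cone with the global monotone quantities developed for Theorem~\ref{thm: first Einstein metric} should suffice in the $m=1$ regime, where the low dimension permits a sharp winding bound that makes the sign change of $\Phi$ unambiguous. Consistent with the absence of analogous metrics for $m\geq 2$, I expect this step to rely on explicit computations specific to the $m=1$ reduction of the ODE system. Once such an $s$ is located, inhomogeneity of $\hat g_{\mathbb{S}^8}$ follows immediately from the classification of $Sp(2)Sp(1)$-invariant homogeneous Einstein metrics in \cite{ziller_homogeneous_1982}.
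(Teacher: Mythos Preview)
Your high-level strategy---one-parameter family of smooth pole solutions, winding count around the cone, intermediate-value argument---is the same scaffolding the paper uses, and the final inhomogeneity step via \cite{ziller_homogeneous_1982} is correct. But two points need correction.

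First, a factual slip: for the group triple \eqref{eqn:_group triple} with $m=1$, the isotropy representation of $\mathsf{K}$ on $\mathfrak{g}/\mathfrak{k}$ has \emph{two} irreducible summands, not three, so the metric has two functions $f_1,f_2$. (Three summands arise only for the related triple with $\mathsf{K}=Sp(m)\Delta U(1)$; see the remark following \eqref{eqn:_original conservation}.) The pole family $\zeta_{s_2}$ is thus already one-dimensional, with the round sphere at $s_2=0$ and the quaternionic-K\"ahler metric at $s_2=\tfrac{1}{2m+6}$.

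Second, and more seriously, the tools you propose to import do not apply here. The compact set $\mathcal{S}$ and the $\sharp C$ estimate of Section~\ref{sec: Existence of the first Einstein metric} are tailored to the curves $\gamma_{s_1}$ emanating from $p_0^+$ (collapse to $\mathbb{HP}^m$), not to the pole curves $\zeta_{s_2}$ from $p_1^+$; moreover, for $m=1$ the set $\mathcal{S}$ degenerates to a one-dimensional locus (Proposition~\ref{prop: S1 for m=1}) and yields nothing. The paper's actual mechanism is different: one passes to cylindrical coordinates $(H,r,\theta)$ about the sine-cone $\Phi$ and reduces to the two-dimensional subsystem \eqref{eqn:_rotational subsystem} on $\{r=0\}$. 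By Lemma~\ref{lem: limiting angle determines the existence}, a second Einstein metric on $\mathbb{S}^{4m+4}$ exists once the limiting winding angle satisfies $\theta_*>\pi$, equivalently once the trajectory $\Psi$ of \eqref{eqn:_rotational subsystem} through the origin has $\Omega=\lim_{\eta\to\infty}\theta(\Psi)>\pi$. For $m=1$ this is established rigorously by an explicit piecewise barrier $\Theta(H)$ joining the origin to the saddle $A_1^-$ (Lemma~\ref{lem: for thm 1.3}): one checks by direct calculation that the planar vector field points upward across $\{\theta=\Theta(H)\}$, trapping $\Psi$ above $A_1^-$ and forcing convergence to some $O_i^-$ with $i\geq 2$. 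This barrier construction is the entire technical content of the proof; your plan defers it to ``explicit computations specific to the $m=1$ reduction,'' but without a concrete mechanism at this step there is no argument.
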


It is worth mentioning that all new solutions found are symmetric. Metrics that are represented by these solutions all have a totally geodesic principal orbit.

This article is structured as follows. In Section \ref{sec: Cohomogeneity one system}, we present the dynamical system for positive Einstein metrics of cohomogeneity one with $\mathsf{G}/\mathsf{K}$ as the principal orbit. Then we apply a coordinate change that makes the cohomogeneity one Ricci-flat system serve as a limiting subsystem. Initial conditions and terminal conditions are transformed into critical points of the new system. The new system admits $\mathbb{Z}_2$-symmetry. By a sign change, one can transform initial conditions into terminal conditions. Hence the problem of finding globally defined positive Einstein metrics boils down to finding heteroclines that join two different critical points.

In Section \ref{sec: Linearization at critical points}, we compute linearizations of the critical points mentioned above and obtain two 1-parameter families of locally defined positive Einstein metrics. One family is defined on a tubular neighborhood around $\mathbb{HP}^m$, represented by a 1-parameter family of integral curves $\gamma_{s_1}$. The other family is defined on a neighborhood of a point in $\mathbb{S}^{4m+4}$, represented by another 1-parameter family of integral curves $\zeta_{s_2}$.

In Section \ref{sec: Existence of the first Einstein metric}, we make a little modification on the quantity $\sharp C_w(\bar{h})$ in \cite{bohm_inhomogeneous_1998} and it is assigned to both $\gamma_{s_1}$ and $\zeta_{s_2}$ (hence denoted as $\sharp C(\gamma_{s_1})$ and $\sharp C({\zeta_{s_2}})$). We construct a compact set to obtain an estimate for $\sharp C(\gamma_{s_1})$ of some local solutions. Then we apply Lemma 4.4 in \cite{bohm_inhomogeneous_1998} and prove Theorem \ref{thm: first Einstein metric}.  

In Section \ref{sec: Limiting Winding Angle}, we apply another coordinate change that allows us to obtain more information on $\sharp C(\gamma_{s_1})$ and $\sharp C(\zeta_{s_2})$, which is encoded in the initial value problem \eqref{eqn:_determine existence} in Theorem \ref{thm: second Einstein metric}. We also prove Theorem \ref{thm: 8-sphere}. 

Visual summaries of Theorem \ref{thm: first Einstein metric}-\ref{thm: 8-sphere} are presented at the end of this article.

\textbf{Acknowledgments} The research is funded by NSFC (No. 12071489), the Foundation for Young Scholars of Jiangsu Province, China (BK-20220282), and XJTLU Research Development Funding (RDF-21-02-083). The author is grateful to McKenzie Wang for his constant support and encouragement. The author would like to thank Christoph B\"ohm for his helpful suggestions and remarks on this project. The author also thanks Cheng Yang and Wei Yuan for many inspiring discussions.

\section{Cohomogeneity one system}
\label{sec: Cohomogeneity one system}
Consider the group triple $(\mathsf{K},\mathsf{H},\mathsf{G})$ in \eqref{eqn:_group triple}. The isotropy representation $\mathfrak{g}/\mathfrak{k}$ consists of two inequivalent irreducible summands $\mathfrak{p}_1=\mathfrak{h}/\mathfrak{k}$ and $\mathfrak{p}_2=\mathfrak{g}/\mathfrak{h}$. Let the standard sphere metric $g_{\mathbb{S}^{4m+3}}$ on $\mathsf{G}/\mathsf{K}=\mathbb{S}^{4m+3}$ be the background metric. As any $\mathsf{G}$-invariant metric on $\mathsf{G}/\mathsf{K}$ is determined by its restriction to one tangent space $\mathfrak{g}/\mathfrak{k}$, the metric has the form of
$$
f_1^2 \left.g_{\mathbb{S}^{4m+3}}\right|_{\mathfrak{p}_1}+f_2^2 \left.g_{\mathbb{S}^{4m+3}}\right|_{\mathfrak{p}_2}.
$$
Let $f_1$ and $f_2$ be functions that are defined on the 1-dimensional orbit space. We consider Einstein equations for the cohomogeneity one metric
$$
g:=dt^2+f_1^2 \left.g_{\mathbb{S}^{4m+3}}\right|_{\mathfrak{p}_1}+f_2^2 \left.g_{\mathbb{S}^{4m+3}}\right|_{\mathfrak{p}_2}.
$$
By \cite{eschenburg_initial_2000}, the metric $g$ is an Einstein metric on $(t_*-\epsilon,t_*+\epsilon)\times \mathsf{G}/\mathsf{K}$ if $(f_1,f_2)$ is a solution to
\begin{equation}
\label{eqn:_Original Einstein}
\begin{split}
&\frac{\ddot{f_1}}{f_1}-\left(\frac{\dot{f_1}}{f_1}\right)^2=-\left(3 \frac{\dot{f_1}}{f_1}+4m\frac{\dot{f_2}}{f_2}\right)\frac{\dot{f_1}}{f_1}+2\frac{1}{f_1^2}+4m\frac{f_1^2}{f_2^4}-\Lambda,\\
&\frac{\ddot{f_2}}{f_2}-\left(\frac{\dot{f_2}}{f_2}\right)^2=-\left(3 \frac{\dot{f_1}}{f_1}+4m\frac{\dot{f_2}}{f_2}\right)\frac{\dot{f_2}}{f_2}+(4m+8)\frac{1}{f_2^2}-6\frac{f_1^2}{f_2^4}-\Lambda,
\end{split}
\end{equation}
with a conservation law
\begin{equation}
\label{eqn:_original conservation}
3\left(\frac{\dot{f_1}}{f_1}\right)^2+4m\left(\frac{\dot{f_2}}{f_2}\right)^2-\left(d_1 \frac{\dot{f_1}}{f_1}+d_2\frac{\dot{f_2}}{f_2}\right)^2+6\frac{1}{f_1^2}+4m(4m+8)\frac{1}{f_2^2}-12m\frac{f_1^2}{f_2^4}-(n-1)\Lambda=0.
\end{equation}
To fix homothety, we set $\Lambda=n$ in this article. We leave $\Lambda$ in the equations for readers to trace the Einstein constant.

\begin{remark}
If we replace the principal orbit $\mathsf{G}/\mathsf{K}$ by $\mathbb{S}^{4m+3}=[Sp(m+1)U(1)]/[Sp(m)\Delta U(1)]$, then the isotropy representation $\mathfrak{g}/\mathfrak{k}$ consists of three inequivalent irreducible summands. The principal orbit can collapse either as $\mathbb{HP}^m$ or $\mathbb{CP}^{2m+1}$, depending on the choice of intermediate group. For such a principal orbit, the dynamical system of cohomogeneity one Einstein metrics involves three functions and has \eqref{eqn:_Original Einstein} as its subsystem. A numerical solution in \cite{Hiragane_2003} indicates the existence of a positive Einstein metric where $\mathsf{G}/\mathsf{K}$ collapse to $\mathbb{HP}^m$ on one end and $\mathbb{CP}^{2m+1}$ on the other end.
\end{remark}

We consider \eqref{eqn:_Original Einstein} and \eqref{eqn:_original conservation} with the following two initial conditions. By \cite{eschenburg_initial_2000}, for the metric $g$ to extend smoothly to the singular orbit $\mathbb{HP}^{m}$, we have 
\begin{equation}
\label{eqn:_initial condition 1}
\lim_{t\to 0}\left(f_1,f_2,\dot{f}_1,\dot{f}_2\right)=\left(0,f,1,0\right)
\end{equation}
for some $f>0$.
On the other hand, for $g$ to extend smoothly to a point where $\mathsf{G}/\mathsf{K}$ fully collapses, one considers
\begin{equation}
\label{eqn:_initial condition 2}
\lim_{t\to 0}\left(f_1,f_2,\dot{f}_1,\dot{f}_2\right)=\left(0,0,1,1\right).
\end{equation}
By Myers' theorem, any solution obtained from \eqref{eqn:_Original Einstein} that represents an Einstein metric on $\mathbb{HP}^{m+1}\sharp\overline{\mathbb{HP}}^{m+1}$ must be defined on $[0,T]$ for some finite $T>0$. Specifically, one looks for solutions with the initial condition \eqref{eqn:_initial condition 1} and the terminal condition 
\begin{equation}
\label{eqn:_terminal condition 1}
\lim_{t\to T}\left(f_1,f_2,\dot{f}_1,\dot{f}_2\right)=\left(0,\bar{f},-1,0\right)
\end{equation}
for some $\bar{f}>0$.
Similarly, to construct an Einstein metric on $\mathbb{S}^{4m+4}$, one looks for solutions with the initial condition \eqref{eqn:_initial condition 2} and the terminal condition
\begin{equation}
\label{eqn:_terminal condition 2}
\lim_{t\to T}\left(f_1,f_2,\dot{f}_1,\dot{f}_2\right)=\left(0,0,-1,-1\right).
\end{equation}

\begin{remark}
In \cite{koiso_hypersurfaces_1981}, one takes a non-collapsed principal orbit $\mathsf{G}/\mathsf{K}$ as the initial data. Specifically, consider 
$$\left(f_1,f_2,\dot{f}_1,\dot{f}_2\right)=\left(\bar{f}_1,\bar{f}_2,\bar{h}_1,\bar{h}_2\right)$$ 
for some positive $\bar{f}_i$'s. To construct a positive Einstein metric, one looks for a solution that extends backward and forward smoothly to either $\mathbb{HP}^m$ or a point on $\mathbb{S}^{4m+4}$ in finite time.
\end{remark}

Inspired by a personal communication with Wei Yuan, we introduce a coordinate change that transforms \eqref{eqn:_Original Einstein} to a polynomial ODE system. Let $L$ be the shape operator of principal orbit. Define 
$$
X_1:=\frac{\frac{\dot{f_1}}{f_1}}{\sqrt{(\trace{L})^2+n\Lambda}},\quad X_2:=\frac{\frac{\dot{f_2}}{f_2}}{\sqrt{(\trace{L})^2+n\Lambda}},
\quad Y:=\frac{\frac{1}{f_1}}{\sqrt{(\trace{L})^2+n\Lambda}}, \quad Z:=\frac{\frac{f_1}{f_2^2}}{\sqrt{(\trace{L})^2+n\Lambda}}.
$$
Also, define 
\begin{equation*}
\begin{split}
&H:=3X_1+4mX_2, \quad G:=3X_1^2+4mX_2^2,\\
&R_1:=2Y^2+4mZ^2,\quad R_2:=(4m+8)YZ-6Z^2.
\end{split}
\end{equation*}
Consider $d\eta=\sqrt{\trace(L)^2+n\Lambda}dt$. Let $'$ denote taking the derivative with respect to $\eta$. Then \eqref{eqn:_Original Einstein} becomes
\begin{equation}
\label{eqn:_new Positive Einstein system}
\begin{bmatrix}
X_1\\
X_2\\
Y\\
Z
\end{bmatrix}'= V(X_1,X_2,Y,Z)=
\begin{bmatrix}
X_1H\left(G+\frac{1}{n} (1-H^2)-1\right)+R_1-\frac{1}{n}(1-H^2)\\
X_2H\left(G+\frac{1}{n} (1-H^2)-1\right)+R_2-\frac{1}{n} (1-H^2)\\
Y\left(H\left(G+\frac{1}{n} (1-H^2)\right)-X_1\right)\\
Z\left(H\left(G+\frac{1}{n} (1-H^2)\right)+X_1-2X_2\right)
\end{bmatrix}.
\end{equation}
The conservation law \eqref{eqn:_original conservation} becomes
\begin{equation}
\label{eqn:_new positive conservation law 1}
\mathcal{C}_{\Lambda \geq 0}: G+\frac{1}{n}(1-H^2)+6Y^2+4m(4m+8)YZ-12mZ^2=1.
\end{equation}
Or equivalently,
\begin{equation}
\label{eqn:_new positive conservation law2}
\mathcal{C}_{\Lambda \geq 0}: \frac{12m}{n}(X_1-X_2)^2+6Y^2+4m(4m+8)YZ-12mZ^2=1-\frac{1}{n}.
\end{equation}
We can retrieve the original system by
\begin{equation}
\label{eqn:_how to get back to the original system 2}
t= \int_{\eta_*}^\eta \sqrt{\frac{1-H^2}{n\Lambda}} d\tilde{\eta}, \quad f_1=\frac{1}{Y}\sqrt{\frac{1-H^2}{n\Lambda}},\quad  f_2=\frac{1}{\sqrt{YZ}}\sqrt{\frac{1-H^2}{n\Lambda}}.
\end{equation}

It is clear that $H^2\leq  1$ by the definition of $H$ and $X_i$'s. However, such a piece of information can be obtained from the new system alone without \eqref{eqn:_Original Einstein} and \eqref{eqn:_original conservation}.
Note that
\begin{equation}
\label{eqn:_derivative of H}
\begin{split}
H'&=\langle\nabla H, V\rangle\\
&=H^2\left(G+\frac{1}{n}(1-H^2)-1\right)+6Y^2+4m(4m+8)YZ-12mZ^2-(1-H^2)\\
&=H^2\left(G+\frac{1}{n}(1-H^2)-1\right)+1-G-\frac{1}{n}(1-H^2)-(1-H^2)\quad \text{by \eqref{eqn:_new positive conservation law 1}}\\
&=(H^2-1)\left(G+\frac{1}{n}(1-H^2)\right)=(H^2-1)\left(\frac{1}{n}+\frac{12m}{n}(X_1-X_2)^2\right).
\end{split}
\end{equation}
Therefore, the following algebraic surface in $\mathbb{R}^4$ with boundary 
$$
\mathcal{E}:=\mathcal{C}_{\Lambda \geq 0}\cap \{Y,Z\geq 0\} \cap \{H^2\leq 1\}
$$
is invariant. Moreover,
$\mathcal{E}\cap \{H=\pm 1\}$ are two invariant sets of lower dimension. The $\mathbb{Z}_2$-symmetry on the sign of $(X_1,X_2)$ gives a one to one correspondence between integral curves on $\mathcal{E}\cap \{H= 1\}$ and those on $\mathcal{E}\cap \{H= -1\}$.

\begin{remark}
\label{rem: H^2=1 invariant}
The restricted system of \eqref{eqn:_new Positive Einstein system} on $\mathcal{E}\cap \{H= 1\}$ is in fact \eqref{eqn:_Original Einstein} with $\Lambda=0$ under the coordinate change $d\eta=(\trace{L}) dt$. The dynamical system is essentially the same as the one that appears in \cite{wink_cohomogeneity_2017}. An integral curve on the subsystem is known for representing a complete Ricci-flat metric defined on the non-compact manifold $\mathbb{HP}^{m+1}\backslash\{*\}$ \cite{bohm_inhomogeneous_1998}. The Ricci-flat metric on $\mathbb{HP}^{m+1}\backslash\{*\}$ is the limit cone for locally defined positive Einstein metrics on the tubular neighborhood around $\mathbb{HP}^m$.
\end{remark} 

\begin{remark}
\label{rem: fix the guage}
If an integral curve to \eqref{eqn:_new Positive Einstein system} enters $\mathcal{E}\cap \{H<1\}$ and is defined on $\mathbb{R}$, then from \eqref{eqn:_derivative of H} it must cross $\mathcal{E}\cap\{H=0\}$ transversally. The crossing point corresponds to the \emph{turning point} in \cite{bohm_inhomogeneous_1998}. For any integral curve to \eqref{eqn:_new Positive Einstein system} that has a turning point, we choose the $\eta_*$ in \eqref{eqn:_how to get back to the original system 2} so that $t_*:= \int_{\eta_*}^0 \sqrt{\frac{1-H^2}{n\Lambda}} d\tilde{\eta}$ is the value at which $\trace{L}$ vanishes. By our choice of $\eta_*$, the integral curve crosses $\mathcal{E}\cap\{H=0\}$ at $\eta=0$. There are cohomogeneity one Einstein systems with additional geometric structure, e.g. the one considered in \cite{foscolo_new_2017}, where every trajectory has a turning point. 
\end{remark}

\begin{remark}
\label{rem: w-intersect}
From \eqref{eqn:_new positive conservation law2}, the inequality $6Y^2+4m(4m+8)YZ-12mZ^2\leq 1-\frac{1}{n}$ is always valid. Therefore, the set
$\mathcal{E}\cap \{Z-\rho Y\leq 0\}$ is compact for any fixed $\rho\in \left[0,\frac{m(4m+8)+\sqrt{m^2(4m+8)^2+18m}}{6m}\right)$. If the maximal interval of existence of an integral curve to \eqref{eqn:_new Positive Einstein system} is $(-\infty,\bar{\eta})$ for some $\bar{\eta}\in \mathbb{R}$, it must escape $\mathcal{E}\cap \{Z-\rho Y\leq 0\}$. The crossing point corresponds to the \emph{W-intersection point} in \cite{bohm_inhomogeneous_1998}. In Proposition \ref{prop: noncompact invariant set} and Definition \ref{def: W-intersection}, we introduce an invariant set $\mathcal{W}$ and a modified definition for the $W$-intersection point, which fixes $\rho=1$ in the original definition in \cite{bohm_inhomogeneous_1998}.
\end{remark}

\section{Linearization at critical points}
\label{sec: Linearization at critical points}
The local existence of positive Einstein metrics around the singular orbit $\mathbb{HP}^{m}$ is well-established in \cite{bohm_inhomogeneous_1998}. We interpret the result using the new coordinate. For $m\geq 2$, the vector field $V$ has in total $10$ critical points ($12$ critical points for $m=1$) on $\mathcal{E}$. As indicated by their superscripts, these critical points lie on either $\mathcal{E}\cap \{H=1\}$ or $\mathcal{E}\cap \{H=-1\}$. 
\begin{itemize}
\item
$p_0^\pm=\left(\pm\frac{1}{3},0,\frac{1}{3},0\right)$

These points represent the initial condition \eqref{eqn:_initial condition 1} and the terminal condition \eqref{eqn:_terminal condition 1}. Integral curves that emanate from $p_0^+$ and enter $\mathcal{E}\cap\{H<1\}$ represent positive Einstein metrics defined on a tubular neighborhood around $\mathbb{HP}^m$. A complete Einstein metric on $\mathbb{HP}^{m+1}\sharp\overline{\mathbb{HP}}^{m+1}$ is represented by a heterocline that joins $p_0^\pm$.
\item
$p_1^\pm=\left(\pm\frac{1}{n},\pm\frac{1}{n},\frac{1}{n},\frac{1}{n}\right)$

These points represent the initial condition \eqref{eqn:_initial condition 2} and the terminal condition \eqref{eqn:_terminal condition 2}. Integral curves that emanate from $p_0^+$ and enter $\mathcal{E}\cap\{H<1\}$ represent positive Einstein metrics defined on a tubular neighborhood around a point on $\mathbb{S}^{4m+4}$. The standard sphere metric is represented by a straight line that joins $p_1^\pm$. It is also worth mentioning that the quaternionic-K\"ahler metric on $\mathbb{HP}^{m+1}$ (resp. $\overline{\mathbb{HP}}^{m+1}$) is represented by an integral curve that joins $p_0^+$ and $p_1^-$ (resp. $p_0^-$ and $p_1^+$). 
\item
$p_2^\pm=\left(\pm\frac{1}{n},\pm \frac{1}{n},(2m+3)z_0,z_0\right),\quad z_0=\frac{1}{n}\sqrt{\frac{2m+1}{2m+(2m+3)^2}}$

These points represent the initial condition and the terminal condition where the principal orbit collapses as Jensen's sphere\cite{jensen_einstein_1973}. There is only one integral curve that emanates from $p_2^+$ and it represents the singular sine metric cone with its base as Jensen's sphere \cite{jensen_einstein_1973}. It is also worth mentioning that $p_2^+$ is a sink for the Ricci-flat subsystem of \eqref{eqn:_new Positive Einstein system} restricted on $\mathcal{E}\cap \{H=1\}$, representing the asymptotically conical limit.
\item
$q_1^\pm=\left(\pm\frac{3+ 2\sqrt{12m^2+6m}}{3n},\pm\frac{4m-2\sqrt{12m^2+6m}}{4mn},0,0\right),$\\$q_2^\pm=\left(\pm\frac{3-2 \sqrt{12m^2+6m}}{3n},\pm\frac{4m+2\sqrt{12m^2+6m}}{4mn},0,0\right)$

These critical points are in general ``bad'' points for our study. Integral curves that converge to $q_1^-$ or $q_2^-$ represent metrics with blown up $\dot{f}_1$ and $\dot{f}_2$. Straightforward computations show that for $m\geq 2$, critical points $q_1^+$ and $q_2^+$ are sources for \eqref{eqn:_new Positive Einstein system} on \eqref{eqn:_new positive conservation law 1}. By the $\mathbb{Z}_2$ symmetry, critical points $q_1^-$ and $q_2^-$ are sinks for $m\geq 2$.
\item
$q_3^\pm=\left(\mp\frac{1}{3},\pm\frac{2}{4m},0,\frac{\sqrt{3-2m}}{6m}\right)$

These critical points are also ``bad'' points as $q_1^\pm$ and $q_2^\pm$. They only exist for $m=1$. Integral curves that converge to $q_3^-$ represent metrics with blown up $\dot{f_1}$. For $m=1$, critical points $q_1^+$ and $q_3^+$ are sources and $q_1^-$ and $q_3^-$ are sinks; $q_2^+$ and $q_2^-$ are saddles.
\end{itemize}

\begin{proposition}
\label{prop: all the points}
The list above exhausts all critical points on $\mathcal{E}$.
\end{proposition}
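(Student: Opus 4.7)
The plan is to reduce the equation $V=0$ on $\mathcal{E}$ to a small algebraic problem in two stages: first a pointwise constraint forcing $H^2=1$, then an enumeration by the vanishing of $Y$ and $Z$.

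The first step is the observation that every critical point lies on $\mathcal{E}\cap\{H^2=1\}$. Indeed, from \eqref{eqn:_derivative of H} one has $H'=(H^2-1)\bigl(G+\tfrac{1}{n}(1-H^2)\bigr)$, and the alternative form \eqref{eqn:_new positive conservation law2} of the conservation law rewrites $G+\tfrac{1}{n}(1-H^2)=\tfrac{1}{n}+\tfrac{12m}{n}(X_1-X_2)^2\ge \tfrac{1}{n}>0$ on $\mathcal{E}$. Hence $V=0$ forces $H^2=1$. By the $\mathbb{Z}_2$-symmetry $(X_1,X_2,Y,Z)\mapsto(-X_1,-X_2,Y,Z)$ which interchanges $\mathcal{E}\cap\{H=1\}$ and $\mathcal{E}\cap\{H=-1\}$, it suffices to enumerate critical points on the slice $\{H=1\}$ and then flip signs to get the others.

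On $\{H=1\}$ the equation $V=0$ becomes
\begin{align*}
&X_1(G-1)+2Y^2+4mZ^2=0,\\
&X_2(G-1)+(4m+8)YZ-6Z^2=0,\\
&Y(G-X_1)=0,\\
&Z(G+X_1-2X_2)=0,
\end{align*}
together with $3X_1+4mX_2=1$ and the conservation law. The next step is a four-way case split on which of $Y,Z$ vanish. If $Y=Z=0$, the first two equations give $X_1(G-1)=X_2(G-1)=0$; since $X_1=X_2=0$ contradicts $H=1$, one needs $G=1$, and intersecting $3X_1^2+4mX_2^2=1$ with $3X_1+4mX_2=1$ produces exactly two solutions, recovering $q_1^+$ and $q_2^+$. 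If $Y>0,Z=0$, the third equation gives $G=X_1$, which upon feeding back into the first two equations forces $(X_1,X_2,Y)=(1/3,0,1/3)$, i.e.\ $p_0^+$. If $Y=0,Z>0$, the fourth equation gives $G=2X_2-X_1$; a short elimination pins $X_1=-1/3$, $X_2=1/(2m)$, and $Z^2=(3-2m)/(36m^2)$, so a real solution exists only for $m=1$, giving $q_3^+$. If $Y>0,Z>0$, the third and fourth equations give $G=X_1$ and $X_1=X_2$, so $X_1=X_2=1/n$ and $G=1/n$; substituting back, $(Y,Z)$ must satisfy $Y^2-(2m+4)YZ+(2m+3)Z^2=0$, which factors as $(Y-Z)(Y-(2m+3)Z)=0$, and each factor combined with the conservation law yields $p_1^+$ and $p_2^+$ respectively.

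The calculations in each case are short and elementary, so there is no serious technical obstacle; the only nonobvious point is the very first one, namely that $G+\tfrac{1}{n}(1-H^2)$ is strictly positive on $\mathcal{E}$, since it is this strict positivity (obtained only after invoking the conservation law) that collapses the critical-point problem onto the two slices $H=\pm 1$ and cuts the algebra down to the finite list above.
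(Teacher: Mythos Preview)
Your proof is correct and follows the same strategy as the paper's proof: use \eqref{eqn:_derivative of H} together with the positivity of $G+\tfrac{1}{n}(1-H^2)=\tfrac{1}{n}+\tfrac{12m}{n}(X_1-X_2)^2$ to force $H^2=1$, then split into cases according to which of $Y,Z$ vanish. The paper's proof is two sentences and leaves the case analysis to the reader; you have simply carried it out in full.
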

\begin{proof}
By \eqref{eqn:_derivative of H}, it is clear that critical points on $\mathcal{E}$ must lie on $\{H^2=1\}$. The list is complete by considering the vanishing of the $Y$-entry and $Z$-entry.
\end{proof}

For any $m$, linearizations at $q_i^\pm$'s show that the phase space $\mathcal{E}$ is ``filled'' with integral curves that emanate from $q_i^+$ or those that converge to $q_i^-$. Hence most integral curves that emanate from $p_0^+$ or $p_1^+$ are anticipated to converge to one of these $q_i^-$'s. In the following, we give a detailed analysis of the linearizations at $p_0^+$ and $p_1^+$ and integral curves that emanate from these critical points. 

The linearization at $p_0^+$ is 
$$
\begin{bmatrix}
-\frac{8m-6}{3n}&-\frac{32m^2-24m}{9n}&\frac{4}{3}&0\\
\frac{6}{n}&\frac{16m-6}{3n}&0&\frac{4m+8}{3}\\
\frac{8m}{3n}&\frac{16m^2-12m}{9n}&0&0\\
0&0&0&\frac{2}{3}
\end{bmatrix}.
$$
Eigenvalues and eigenvectors are 
$$\lambda_1=\lambda_2=\frac{2}{3},\quad \lambda_3=-\frac{2}{3},\quad \lambda_4=\frac{8m}{3n};$$
$$
v_1=\begin{bmatrix}
-(8m^2+18m+18)\\
-9\\
-(8m^2+18m)\\
9
\end{bmatrix},
\quad
v_2=\begin{bmatrix}
-4m(m+2)\\
3(m+2)\\
-2m(m+2)\\
3
\end{bmatrix},
\quad
v_3=\begin{bmatrix}
-4m\\
3\\
2m\\
0
\end{bmatrix},
\quad
v_4=\begin{bmatrix}
-2(4m-3)\\
18\\
4m-3\\
0
\end{bmatrix}.
$$
The first three eigenvectors are tangent to $\mathcal{E}$. Consider linearized solutions in the form of 
\begin{equation}
\label{eqn:_linearized solution near p_0}
p_0^++ e^{\frac{2}{3}\eta}v_1+s_1e^{\frac{2}{3}\eta}v_2
\end{equation}
for some $s_1\in \mathbb{R}$.
By Hartman–Grobman theorem, there is a 1 to 1 correspondence between each choice of $s_1\in \mathbb{R}$ and an actual solution curve that emanates from $p_0^+$ and leaves $\mathcal{E}\cap \{H=1\}$ initially. Hence we use $\gamma_{s_1}$ to denote an actual solution that approaches the linearized solution \eqref{eqn:_linearized solution near p_0} near $p_0^+$. Moreover, by the unstable version of Theorem 4.5 in Chapter 13 of \cite{coddington_theory_1955}, there is some $\epsilon>0$ that 
$$\gamma_{s_1}= p_0^+ + e^{\frac{2}{3}\eta}v_1+s_1e^{\frac{2}{3}\eta}v_2+O\left(e^{\left(\frac{2}{3}+\epsilon\right)\eta}\right).
$$
From the linearization at $p_0^+$ and \eqref{eqn:_how to get back to the original system 2}, the parameter $s_1$ is related to the initial condition $f$ in \eqref{eqn:_initial condition 1} as follows.
\begin{equation}
\label{eqn:_initial condition related to s_1}
f=\lim\limits_{\eta\to -\infty} \left(\frac{1}{\sqrt{YZ}}\frac{1-H^2}{n\Lambda}\right)(\gamma_{s_1})=\sqrt{\frac{6m+18}{n}\frac{1}{3+s_1}}.
\end{equation}

We set $s_1>-3$ so that $f$ is positive. From another perspective, in order to have $\gamma_{s_1}$ be in $\mathcal{E}$, we only consider $\gamma_{s_1}$ with $s_1> -3$ so that $Z$ is positive initially along the integral curve. Note that $v_2$ is tangent to $\mathcal{E}\cap \{H=1\}$. Therefore, it makes sense to let $\gamma_\infty$ denote the integral curve that lies in $\mathcal{E}\cap \{H=1\}$ such that
\begin{equation}
\label{eqn:_linearized RF near p_0}
\gamma_\infty\sim p_0^+ +0\cdot e^{\frac{2}{3}\eta}v_2+ 1\cdot e^{\frac{2}{3}\eta}v_2
\end{equation}
near $p_0^+$. The integral curve $\gamma_\infty$ represents the Ricci-flat metric on $\mathbb{HP}^{m+1}\backslash\{*\}$ constructed in \cite{bohm_inhomogeneous_1998}. For $m=1$, the metric is the $\spin(7)$ metric in \cite{bryant_construction_1989} and \cite{gibbons_einstein_1990}. Furthermore, as shown in Proposition 6.3 in \cite{chi_einstein_2020}, the integral curve $\gamma_\infty$ lies on the following $1$-dimensional invariant set
\begin{equation}
\label{eqn:_characterization of gamma_infty}
\mathcal{B}_{\spin(7)}:=\mathcal{E}\cap \{Y-2Z-X_1=0\}\cap \{3Z-X_2=0\}
\end{equation}
and it joins $p_0^+$ and $p_2^+$.
\begin{remark}
\label{rem: gamma_infty for m=1}
The defining equations in \eqref{eqn:_characterization of gamma_infty} are equivalent to the cohomogeneity one $\spin(7)$ condition on $\mathbb{HP}^2\backslash\{*\}$. Specifically, we have the following dynamical system.
\begin{equation}
\begin{split}
&\frac{\dot{f_1}}{f_1}=\frac{1}{f_1}-2\frac{f_1}{f_2^2},\\
&\frac{\dot{f_2}}{f_2}=3\frac{f_1}{f_2^2}.
\end{split}
\end{equation}
Similar to the initial value problem in Remark \ref{rem: standard QK}, the initial condition can be obtained from the coordinate of $p_0^+$ and the limit $\lim\limits_{\eta\to-\infty}\left(\frac{X_2}{\sqrt{YZ}}\right)(\gamma_\infty(\eta))$. Since a Ricci-flat metric is homothety invariant, the extra freedom allows us to set $f_2(0)$ as any positive number. Solving the initial value problem with
$$(f_1(0), \dot{f_1}(0),\dot{f_2}(0),\dot{f_2}(0)))=(0,1,\beta,0),\quad \beta>0$$
yields the homothetic family of $\spin(7)$ metrics in \cite{bryant_construction_1989} and \cite{gibbons_einstein_1990}.
\end{remark}

The linearization at $p_1^+$ is 
$$
\begin{bmatrix}
-\frac{16m^2+8m-6}{n^2}&\frac{16m(m+1)}{n^2}&\frac{4}{n}&\frac{8m}{n}\\
\frac{12m+12}{n^2}&-\frac{4m+6}{n^2}&\frac{4m+8}{n}&\frac{4m-4}{n}\\
-\frac{4m}{n^2}&\frac{4m}{n^2}&0&0\\
\frac{4m+6}{n^2}&-\frac{4m+6}{n^2}&0&0
\end{bmatrix}.
$$
Eigenvalues and eigenvectors are 
$$\mu_1=\mu_2=\frac{2}{n},\quad \mu_3=0,\quad \mu_4=-\frac{4(m+1)}{n};$$
$$
w_1=\begin{bmatrix}
-1\\
-1\\
0\\
0
\end{bmatrix},\quad 
w_2=\begin{bmatrix}
-4m\\
3\\
2m\\
-(2m+3)
\end{bmatrix},\quad
w_3=\begin{bmatrix}
-(n-1)\\
-(n-1)\\
1\\
1
\end{bmatrix},\quad
w_4=\begin{bmatrix}
-8m(m+1)\\
6(m+1)\\
-2m\\
2m+3
\end{bmatrix}.
$$
The first three eigenvectors are tangent to $\mathcal{E}$. Hence there exists a 1-parameter family of integral curves $\zeta_{s_2}$ that emanate from $p_1^+$ and 
\begin{equation}
\label{eqn:_linearized positive einstein near P1+}
\zeta_{s_2}= p_1^+ + e^{\frac{2}{n}\eta}w_1+s_2e^{\frac{2}{n}\eta}w_2+O\left(e^{\left(\frac{2}{3}+\epsilon\right)\eta}\right).
\end{equation}
The initial condition \eqref{eqn:_initial condition 2} has a degree of freedom in the second-order derivative. Specifically, the parameter $s_2$ is related to the limit $\lim\limits_{t\to 0}\frac{\ddot{f_2}}{f_2}\frac{f_1}{\ddot{f_1}}$. From \eqref{eqn:_Original Einstein}, \eqref{eqn:_how to get back to the original system 2} and the linearization at $p_1^+$, we have 
\begin{equation}
\label{eqn:_initial condition related to s_2}
\lim\limits_{t\to 0}\frac{\ddot{f_2}}{f_2}\frac{f_1}{\ddot{f_1}}=\lim\limits_{\eta\to -\infty}\frac{X_2^2-HX_2+R_2-\frac{1-H^2}{n}}{X_1^2-HX_1+R_1-\frac{1-H^2}{n}}=\frac{n}{4m(1+6ms_2)}-\frac{9}{12m}.
\end{equation}
Although it is clear that $\zeta_{s_2}$ is in $\mathcal{E}$ for any $s_2\in \mathbb{R}$, we mainly consider $s_2\geq 0$ in this article. We have the following proposition for $\zeta_{s_2}$ with $s_2<0$.

\begin{proposition}
\label{prop: noncompact invariant set}
Each $\zeta_{s_2}$ with $s_2<0$ either does not converge to any critical point in $\mathcal{E}$, or it converges to $q_2^-$ ($q_2^-$ or $q_3^-$ if $m=1$).
\end{proposition}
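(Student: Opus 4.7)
The plan is to exhibit a forward-invariant subset $\mathcal{W}\subset\mathcal{E}$ that contains $\zeta_{s_2}$ immediately after it leaves $p_1^+$ (for any $s_2<0$), check which critical points of Proposition~\ref{prop: all the points} lie in $\overline{\mathcal{W}}$, and eliminate the only remaining undesired candidate via a monotone quantity. Guided by the signs in the $s_2<0$ sector of \eqref{eqn:_linearized positive einstein near P1+} and by Remark~\ref{rem: w-intersect}, the natural choice is $\mathcal{W}:=\mathcal{E}\cap\{X_1-X_2>0\}\cap\{Z-Y>0\}$.

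First I would verify entry: the eigenvector combination $w_1+s_2 w_2$ has $(X_1-X_2)$- and $(Z-Y)$-components both equal to $-(4m+3)s_2>0$, so $\zeta_{s_2}$ enters $\mathcal{W}$ at some finite $\eta_0$. Then I would establish invariance of $\overline{\mathcal{W}}$ by deriving
\begin{align*}
(Z-Y)' &= (A-X_2)(Z-Y) + (X_1-X_2)(Z+Y),\\
(X_1-X_2)' &= (A-H)(X_1-X_2) + 2(Z-Y)\bigl((2m+3)Z-Y\bigr),
\end{align*}
with $A=H\bigl(G+\tfrac{1-H^2}{n}\bigr)$; the key algebraic step is the factorization $R_1-R_2=2(Z-Y)\bigl((2m+3)Z-Y\bigr)$. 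On $\overline{\mathcal{W}}\cap\{Z=Y\}$ the first derivative equals $(X_1-X_2)(Z+Y)\ge 0$, and on $\overline{\mathcal{W}}\cap\{X_1=X_2\}$ the second equals $2(Z-Y)\bigl((2m+3)Z-Y\bigr)\ge 0$ since $Z\ge Y\ge 0$ forces $(2m+3)Z\ge Y$. Both boundaries are non-outward, so $\overline{\mathcal{W}}$ is forward-invariant and $\zeta_{s_2}$ stays in it for $\eta\ge\eta_0$.

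Next I would run through the list preceding Proposition~\ref{prop: all the points} coordinate-wise: $p_0^-$ and $p_2^-$ violate $Z\ge Y$, and $q_1^-$ violates $X_1\ge X_2$ because the inequality $2\sqrt{12m^2+6m}>4m$ (valid for every $m\ge 1$) forces its $X_2$-coordinate to be positive while $X_1<0$. The critical points remaining in $\overline{\mathcal{W}}$ are therefore $q_2^-$, $p_1^-$, and (for $m=1$) $q_3^-$. To exclude $p_1^-$, I would compute
\[
(\ln(Y/Z))' = (A-X_1)-(A+X_1-2X_2) = -2(X_1-X_2),
\]
which is strictly negative on $\mathcal{W}$. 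The linearization already gives $Y(\eta_0)/Z(\eta_0)<1$, and strict monotonicity then yields $\lim_{\eta\to\infty}Y/Z\le Y(\eta_0)/Z(\eta_0)<1$, contradicting $Y/Z=1$ at $p_1^-$. Hence the $\omega$-limit of $\zeta_{s_2}$, if it exists, must be $q_2^-$ or $q_3^-$.

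The one nonroutine step will be spotting the factorization $R_1-R_2=2(Z-Y)\bigl((2m+3)Z-Y\bigr)$ together with the clean rewriting of $(Z-Y)'$: these are what make both defining inequalities of $\mathcal{W}$ non-outward on their respective boundaries. Once those are in hand, the one-line monotonicity of $Y/Z$ handles $p_1^-$, which is the only critical point in $\overline{\mathcal{W}}$ not eliminated by a direct coordinate check.
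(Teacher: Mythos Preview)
Your approach is essentially the paper's: you define the same set $\mathcal{W}=\mathcal{E}\cap\{X_1-X_2\ge 0\}\cap\{Z-Y\ge 0\}$, verify forward invariance via the same two derivative computations (the paper writes them as \eqref{eqn:_derivative of Z-Y} and \eqref{eqn:_derivative of X1-X2}, and the factorization $R_1-R_2=2(Z-Y)\bigl((2m+3)Z-Y\bigr)$ is exactly what makes the latter work), and then use the monotonicity of $Y/Z$ to finish. The only difference is in the endgame: the paper pushes further to argue that $Y/Z$ must actually tend to $0$ along the trajectory, whereas you stop at the weaker statement $\limsup Y/Z<1$, which already suffices to exclude $p_1^-$ by a direct coordinate check; your version is slightly cleaner for the proposition as stated. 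One small point to make explicit: when you ``run through the list'' you tacitly restrict to critical points with $H=-1$; this is justified since $H'<0$ once $H<1$ by \eqref{eqn:_derivative of H}, but it is worth a sentence since $p_1^+$ and $q_1^+$ also lie in $\overline{\mathcal{W}}$.
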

\begin{proof}
From the linearized solution, it is clear that each $\zeta_{s_2}$ with $s_2<0$ is initially in
\begin{equation}
\label{eqn:_the set W}
\mathcal{W}:=\mathcal{E}\cap \{Z-Y\geq 0\}\cap \{X_1-X_2\geq 0\}.
\end{equation}
As $\mathcal{W}$ includes all points $\left(\sqrt{\frac{n-1}{12m}}\cosh(\lambda),0,0,\sqrt{\frac{n-1}{12nm}}\sinh(\lambda)\right)$ with $\lambda\geq 0$, the set is non-compact. Furthermore, since 
\begin{equation}
\label{eqn:_derivative of Z-Y}
\begin{split}
\left\langle\nabla \left(\frac{Y}{Z}\right),V\right\rangle&=-2\frac{Y}{Z}(X_1-X_2)\leq 0
\end{split}
\end{equation}
and
\begin{equation}
\label{eqn:_derivative of X1-X2}
\begin{split}
\langle\nabla(X_1-X_2),V\rangle&=(X_1-X_2)H\left(G+\frac{1}{n}(1-H^2)-1\right)+2(Z-Y)((2m+3)Z-Y)
\end{split},
\end{equation}
the set $\mathcal{W}$ is invariant. Note that the second term in \eqref{eqn:_derivative of X1-X2} is non-negative in $\mathcal{W}\cap \{X_1-X_2=0\}$.

By \eqref{eqn:_derivative of Z-Y}, it is clear that the function $\frac{Y}{Z}$ monotonically decreases from $1$ along each $\zeta_{s_2}$ with $s_2<0$. If the function $\frac{Y}{Z}$ converges to some positive number and $X_1-X_2$ would converge to zero. From \eqref{eqn:_new positive conservation law2}, we know that both $Y$ and $Z$ converge to some positive numbers with $\frac{Y}{Z}<1$. Hence the RHS of \eqref{eqn:_derivative of X1-X2} is eventually positive as $X_1-X_2$ converges to zero, a contradiction. Hence the function $\frac{Y}{Z}$ converges to zero. Therefore, if a $\zeta_{s_2}$ with $s_2<0$ converges to a critical point in $\mathcal{E}$, it must be $q_2^-$ ($q_2^-$ or $q_3^-$ if $m=1$).
\end{proof}

\begin{remark}
For $s_2=0$, it is clear that $\zeta_0$ lies on the 1-dimensional invariant set 
\begin{equation}
\label{eqn:_standard sine cone}
\mathcal{E}\cap \{X_1=X_2\}\cap \left\{Y=Z=\frac{1}{n}\right\}
\end{equation}
and joins $p_1^\pm$. The integral curve represents the standard sphere metric $g_{\mathbb{S}^{4m+4}}$ on $\mathbb{S}^{4m+4}$. Specifically, defining equations in \eqref{eqn:_standard sine cone} give the initial value problem
\begin{equation}
\begin{split}
\frac{\dot{f_1}}{f_1}&=\frac{\dot{f_2}}{f_2},\quad f_1^2+(\dot{f_1})^2=1,\\
f_1(0)&=f_2(0)=0,\quad \dot{f_1}(0)=\dot{f_2}(0)=1,
\end{split}
\end{equation}
in the original coordinates. The solution is exactly the standard sphere metric
$$
g_{\mathbb{S}^{4m+4}}=dt^2+\sin^2(t)g_{\mathbb{S}^{4m+3}}.
$$
\end{remark}
We define $\zeta_\infty$ to be the integral curve that emanates from $p_1^+$ and lies in $\mathcal{C}_{\Lambda\geq 0}\cap \{H=1\}$. We have
\begin{equation}
\label{eqn:_linearized RF near p_1}
\zeta_\infty\sim p_1^+ +e^{\frac{2}{n}\eta}w_2.
\end{equation}
As studied in \cite{chi_einstein_2020}, the integral curve $\zeta_\infty$ is known to be defined on $\mathbb{R}$ and it joins $p_1^+$ and $p_2^+$ and it represents a complete non-trivial Ricci-flat metric defined on $\mathbb{R}^{4m+4}$.

As shown in the following proposition, there exists an integral curve that joins $p_0^+$ and $p_1^-$, and it represents the standard quaternionic-K\"ahler metric on $\mathbb{HP}^{m+1}$. By the $\mathbb{Z}_2$-symmetry of \eqref{eqn:_new Positive Einstein system} on the sign of $(X_1,X_2)$. We know that there also exists an integral curve that emanates from $p_1^+$ and tends to $p_0^-$, and it represents the standard quaternionic-K\"ahler metric on $\overline{\mathbb{HP}}^{m+1}$.
\begin{proposition}
\label{prop: qk metric as integral curve}
The integral curve $\gamma_{0}$ lies on the 1-dimensional invariant set 
$$
\mathcal{B}_{QK}:=\mathcal{E}\cap \left\{X_1-X_2+Z-Y=0\right\} \cap \{X_2+Z=0\}.
$$
The integral curve $\zeta_{\frac{1}{2m+6}}$ lies on the 1-dimensional invariant set
$$
\bar{\mathcal{B}}_{QK}:=\mathcal{E}\cap \{X_2-X_1+Z-Y=0\}\cap\{X_2-Z=0\}.
$$

\end{proposition}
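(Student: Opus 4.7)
The plan is to verify that $\mathcal{B}_{QK}$ and $\bar{\mathcal{B}}_{QK}$ are each one-dimensional invariant submanifolds of $\mathcal{E}$ containing the relevant critical points, and then to check that the leading-order tangent direction of $\gamma_0$ at $p_0^+$ (respectively of $\zeta_{1/(2m+6)}$ at $p_1^+$) lies in the tangent space of the corresponding set. Combined with the unstable-manifold theorem cited after \eqref{eqn:_linearized solution near p_0}, these two facts force the integral curves in question to lie entirely on the invariant sets, since each set is a single 1-dimensional algebraic curve in $\mathcal{E}$ that already contains the target critical point.

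For the invariance of $\mathcal{B}_{QK}$, set $F_1 := X_2 + Z$ and $F_2 := X_1 - X_2 + Z - Y$, and compute $\langle \nabla F_1, V\rangle = V_2 + V_4$ and $\langle \nabla F_2, V\rangle = V_1 - V_2 + V_4 - V_3$ using \eqref{eqn:_new Positive Einstein system}. In the expression for $\langle \nabla F_2, V\rangle$, the combination $H\left(G+\tfrac{1}{n}(1-H^2)\right)(Z-Y)$ from $V_4 - V_3$ pairs with the $H(G+\tfrac{1}{n}(1-H^2)-1)(X_1-X_2)$ piece from $V_1-V_2$ through the identity $Z - Y = -(X_1-X_2)$ on $\{F_2=0\}$, leaving $-(X_1-X_2)H + R_1 - R_2 + ZX_1 - 2ZX_2 + X_1Y$; substituting $Z = -X_2$ and $Y = X_1 - 2X_2$ then cancels every quadratic monomial in $X_1, X_2$, so $\langle \nabla F_2, V\rangle$ vanishes identically on $\mathcal{B}_{QK}$. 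An analogous pairing reduces $\langle \nabla F_1, V\rangle$ to the residue $-(4m+12)X_2(X_1-X_2) - \frac{1-H^2}{n}$, which is not identically zero on the 2-surface $\{Z=-X_2,\,Y=X_1-2X_2\}$ but does vanish on its intersection with $\mathcal{E}$ after invoking the conservation law $\mathcal{C}_{\Lambda\geq 0}$ in \eqref{eqn:_new positive conservation law 1} to eliminate $(1-H^2)/n$ in favor of $X_1$ and $X_2$. The invariance of $\bar{\mathcal{B}}_{QK}$ is established by the verbatim computation after replacing $F_1$ by $X_2 - Z$ and $F_2$ by $X_2 - X_1 + Z - Y$, which produces the same residue modulo $\mathcal{C}_{\Lambda\geq 0}$.

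The tangency checks reduce to direct substitutions into the eigenvectors. At $p_0^+$, $v_1^{(2)}+v_1^{(4)} = -9+9 = 0$ and $v_1^{(1)}-v_1^{(2)}+v_1^{(4)}-v_1^{(3)} = 0$, so $v_1 \in T_{p_0^+}\mathcal{B}_{QK}$; since $v_2^{(2)}+v_2^{(4)} = 3(m+3) \neq 0$ for $m\geq 1$, the choice $s_1=0$ is the unique one that makes $\gamma_{s_1}$ tangent to $\mathcal{B}_{QK}$ at $p_0^+$. At $p_1^+$, the direction $w_1 + s_2 w_2$ is tangent to $\{X_2 - X_1 + Z - Y = 0\}$ for every $s_2$ (the $s_2$-free part vanishes because $w_1$ is symmetric in its first two entries and has zero third and fourth entries), while its tangency to $\{X_2 - Z = 0\}$ requires $-1 + (2m+6)s_2 = 0$, singling out $s_2 = 1/(2m+6)$.

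The main technical obstacle is the invariance computation for $\langle \nabla F_1, V\rangle$, which only closes after expanding $H = 3X_1 + 4mX_2$, collecting quadratic monomials in $X_1, X_2$, and invoking the conservation law; the manipulations are elementary but demand careful bookkeeping of the coefficient of each monomial. As a geometric sanity check, the rescaled quaternionic-K\"ahler metric on $\mathbb{HP}^{m+1}$ given by $f_1(t) = c\sin(t)\cos(t)$ and $f_2(t) = c\cos(t)$ with $c = \sqrt{(4m+12)/n}$ (normalizing the Einstein constant to $\Lambda = n$) satisfies $\dot f_2 f_2 + f_1 \equiv 0$ and the companion identity $2\cot(2t) = \cot(t) - \tan(t)$ along the entire orbit, which translate respectively into $X_2+Z=0$ and $X_1 - X_2 + Z - Y = 0$; this confirms that $\gamma_0$ is exactly the quaternionic-K\"ahler metric traversed from the $\mathbb{HP}^m$ singular orbit to the collapsed point, and that $\zeta_{1/(2m+6)}$ represents the same Einstein metric in the opposite direction.
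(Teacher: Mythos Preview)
Your proposal is correct and follows essentially the same route as the paper: verify invariance of $\mathcal{B}_{QK}$ by showing the Lie derivatives of the two defining functions vanish on the set (one purely algebraically, one only after invoking $\mathcal{C}_{\Lambda\geq 0}$), and then pin down the parameter via the leading tangent direction at the critical point. The only organizational difference is that the paper handles $\bar{\mathcal{B}}_{QK}$ by the $\mathbb{Z}_2$-symmetry $(X_1,X_2)\mapsto(-X_1,-X_2)$ rather than repeating the tangency check at $p_1^+$, and it makes the existence of an integral curve on $\mathcal{B}_{QK}$ explicit by observing that $p_0^+$ and $p_1^-$ lie on the same branch of the hyperbola \eqref{eqn:_qk conservation}; your version absorbs this into the unstable-manifold argument, which is fine since $v_1$ is an unstable eigenvector tangent to $\mathcal{B}_{QK}$.
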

\begin{proof}
As $X_1-X_2=Y-Z$ and $X_2=-Z$ on $\mathcal{B}_{QK}$, we can eliminate $X_1$ and $X_2$ in \eqref{eqn:_new positive conservation law2}. Hence
\begin{equation}
\label{eqn:_qk conservation}
\frac{12m}{n}(Y-Z)^2+6Y^2+4m(4m+8)YZ-12mZ^2=1-\frac{1}{n}
\end{equation}
holds on $\mathcal{B}_{QK}$. Therefore,
\begin{equation}
\begin{split}
&\left.\langle\nabla(X_2+Z),V\rangle\right|_{\mathcal{B}_{QK}}\\
&=(X_2+Z)H\left(G+\frac{1}{n}(1-H^2)\right)-\frac{1}{n}(1-H^2)-X_2H+(4m+8)YZ-6Z^2+Z(X_1-2X_2)\\
&=-\frac{1}{n}(1-H^2)-X_2H+(4m+8)YZ-6Z^2+Z(X_1-2X_2)\\
&=\frac{1}{n-1}\left(\frac{12m}{n}(Z-Y)^2+6Y^2+4m(4m+8)YZ-12mZ^2-\frac{n-1}{n}\right)\\
&\quad \text{Eliminate $X_1$ and $X_2$ by the definition of $\mathcal{B}_{QK}$}\\
&=0 \quad \text{by \eqref{eqn:_qk conservation}}.
\end{split}
\end{equation}
On the other hand, we have
\begin{equation}
\begin{split}
&\left.\langle\nabla(X_1-X_2+Z-Y),V\rangle\right|_{\mathcal{B}_{QK}}\\
&=(X_1-X_2+Z-Y)\left(H\left(G+\frac{1}{n}(1-H^2)-1\right)+4Z-2Y\right)+(n-1)(X_2+Z)(Z-Y)\\
&=0.
\end{split}
\end{equation}
Therefore, the set $\mathcal{B}_{QK}$ is indeed invariant. 

Since $X_1=Y-2Z$ and $X_2=-Z$ on $\mathcal{B}_{QK}$, one can realize $\mathcal{B}_{QK}$ as a hyperbola \eqref{eqn:_qk conservation}. Note that $p_0^+$ and $p_1^-$ are the only critical points in $\mathcal{B}_{QK}$ and they are in the same connected component in \eqref{eqn:_qk conservation}. Therefore, there is an integral curve that joins $p_0^+$ and $p_1^-$ and lies on $\mathcal{B}_{QK}$. Hence the integral curve must be some $\gamma_{s_1}$. Let $v(\eta)$ be the normalized velocity of the linearized solution that uniquely corresponds to $\gamma_0$. It is clear that $\lim\limits_{\eta\to-\infty}v(\eta)=\frac{v_1}{\|v_1\|}$ is tangent to $\mathcal{B}_{QK}$ at $p_0^+$. Hence we know that $\gamma_0$ lies on $\mathcal{B}_{QK}$. By the $\mathbb{Z}_2$-symmetry on the sign of $(X_1,X_2)$, we know that $\zeta_{\frac{1}{2m+6}}$ lies on the invariant set
$$
\bar{\mathcal{B}}_{QK}:=\mathcal{C}_{\Lambda\geq 0}\cap \{X_2-X_1+Z-Y=0\}\cap\{X_2-Z=0\}
$$
and joins $p_1^+$ and $p_0^-$.
\end{proof}

\begin{remark}
\label{rem: standard QK}
The defining equations for $\mathcal{B}_{QK}$ are equivalent to the following dynamical system
\begin{equation}
\begin{split}
&\frac{\dot{f_1}}{f_1}=-2\frac{f_1}{f_2^2}+\frac{1}{f_1},\\
&\frac{\dot{f_2}}{f_2}=-\frac{f_1}{f_2^2}.\\
\end{split}
\end{equation}
While $f_1(0)=0$ and $\dot{f_1}(0)=1$ can be obtained from the coordinate of $p_0^+$, the initial conditions $f_2(0)$ and $\dot{f_2}(0)$ are obtained from $v_1$. Specifically, from \eqref{eqn:_linearized solution near p_0} we have 
$$
f_2(0)=\lim\limits_{\eta\to-\infty}\left(\frac{\sqrt{1-H^2}}{n\sqrt{YZ}}\right)(\gamma_0(\eta))=\sqrt{\frac{4(m+3)}{n}},\quad \dot{f_2}(0)=\lim\limits_{\eta\to-\infty}\left(\frac{X_2}{\sqrt{YZ}}\right)(\gamma_0(\eta))=0.
$$
Solving the initial value problem, the standard quaternionic-K\"ahler metric on $\mathbb{HP}^{m+1}$ is 
$$
g=dt^2+\frac{m+3}{n}\sin^2\left(2\sqrt{\frac{n}{4(m+3)}}t\right) \left.g_{\mathbb{S}^{4m+3}}\right|_{\mathfrak{p}_1}+\frac{4(m+3)}{n}\cos^2\left(\sqrt{\frac{n}{4(m+3)}}t\right) \left.g_{\mathbb{S}^{4m+3}}\right|_{\mathfrak{p}_2}.
$$
\end{remark}


Lastly, we consider the linearization at $p_2^+$. We have 
$$
\begin{bmatrix}
-\frac{16m^2+8m-6}{n^2}&\frac{16m(m+1)}{n^2}&(8m+12)z_0&8mz_0\\
\frac{12m+12}{n^2}&-\frac{4m+6}{n^2}&4(m+2)z_0&4(2m+1)(m+3)z_0\\
-\frac{4m(2m+3)z_0}{n}&\frac{4m(2m+3)z_0}{n}&0&0\\
\frac{(4m+6)z_0}{n}&-\frac{(4m+6)z_0}{n}&0&0
\end{bmatrix}.
$$
Eigenvalues and eigenvectors are 
$$\delta_1=-\frac{2m+1-\sqrt{(2m+1)^2-8(2m+3)(m+1)n^2z_0^2}}{n},\quad \delta_2=-\frac{2m+1+\sqrt{(2m+1)^2-8(2m+3)(m+1)n^2z_0^2}}{n},
$$
$$\delta_3=\frac{2}{n},\quad \delta_4=0;$$
$$
u_1=\begin{bmatrix}
\frac{2mn}{2m+3}\delta_1\\
-\frac{3n}{2(2m+3)}\delta_1\\
-2mnz_0\\
nz_0
\end{bmatrix},\quad 
u_2=\begin{bmatrix}
\frac{2mn}{2m+3}\delta_2\\
-\frac{3n}{2(2m+3)}\delta_2\\
-2mnz_0\\
nz_0
\end{bmatrix},\quad
u_3=\begin{bmatrix}
-1\\
-1\\
0\\
0
\end{bmatrix},\quad
u_4=\begin{bmatrix}
-2n((2m+3)^2+2m)z_0\\
-2n((2m+3)^2+2m)z_0\\
2m+3\\
1
\end{bmatrix}.
$$
The first three eigenvectors are tangent to $\mathcal{E}$. Furthermore, the first two eigenvectors are tangent to $\mathcal{E}\cap \{H=1\}$ and $\delta_2<\delta_1<0$. For $m\geq 1$, the critical point $p_2^+$ is a stable node for the restricted system on $\mathcal{E}\cap \{H=1\}$. Let $\Phi$ be the only integral curve that emanates from $p_2^+$. It converges to $p_2^-$ and lies on the 1-dimensional invariant set 
\begin{equation}
\label{eqn:_jensen based cone}
\mathcal{E}\cap \{X_1=X_2\}\cap \left\{Y=(2m+3)Z=(2m+3)z_0\right\}.
\end{equation}
\begin{remark}
\label{rem: jensen cone}
The initial value problem from the defining equations \eqref{eqn:_jensen based cone} is similar to the one from \eqref{eqn:_standard sine cone}. Specifically, we have
\begin{equation}
\begin{split}
f_1^2&=\frac{1}{2m+3}f_2^2,\quad \frac{(2m+1)(2m+3)^2}{2m+(2m+3)^2}\left(f_1^2+(\dot{f_1})^2\right)=1,\\
f_1(0)&=f_2(0)=0,\quad \dot{f_1}(0)=\frac{1}{\sqrt{2m+3}}\dot{f_2}(0)=\frac{1}{2m+3}\sqrt{\frac{2m+(2m+3)^2}{2m+1}}.
\end{split}
\end{equation}
Hence $\Phi$ represents the sine cone over Jensen's sphere
\begin{equation*}
\begin{split}
g&=dt^2+\frac{2m+(2m+3)^2}{(2m+1)(2m+3)^2}\sin^2(t) \left.g_{\mathbb{S}^{4m+3}}\right|_{\mathfrak{p}_1}+\frac{2m+(2m+3)^2}{(2m+1)(2m+3)}\sin^2(t) \left.g_{\mathbb{S}^{4m+3}}\right|_{\mathfrak{p}_2}\\
&=dt^2+\sin^2(t)g_{Jensen}.
\end{split}
\end{equation*}
\end{remark}

\section{Existence of the first Einstein metric}
\label{sec: Existence of the first Einstein metric}
We prove the existence of a heterocline that joins $p_0^\pm$ in this section. The technique is to construct a compact set $\mathcal{S}$ such that a $\gamma_{s_1}$ that enters the set can only escape through points in $\mathcal{E} \cap\{H\geq 0\}\cap \{X_1-X_2=0\}$. Then we apply Lemma 4.4 in \cite{bohm_inhomogeneous_1998} to complete the proof.

We define the compact set $\mathcal{S}$ as follows.
Define polynomials
\begin{equation}
\begin{split}
A&:=YX_2-\frac{3}{m} Z\left(X_1+\frac{2m}{3} X_2\right),\\
B&:=\frac{1-H^2}{n}-\frac{2n^2(2m+3)(m-1)}{m(2m+1)(8m+3)} YZ,\\
P&:=X_1\left(R_2-\frac{1}{n}(1-H^2)\right)-X_2\left(R_1-\frac{1}{n}(1-H^2)\right)-2X_2\left(X_1+\frac{2m}{3}X_2\right)(X_1-X_2),\\
Q&:=-4X_2Y^2-(4m+8)(4mX_2+2X_1)YZ+(2X_1+(4m+2)X_2)\frac{1-H^2}{n}\\
&\quad +4X_2\left(X_1+\frac{2m}{3}X_2\right)\left(H+\frac{2m}{3}X_2\right).
\end{split}
\end{equation}
Define
$$
\mathcal{S}:=\mathcal{E}\cap \{X_1-X_2\geq 0\}\cap \{X_2\geq 0\}\cap \{A\geq 0\}\cap \{B\geq 0\}\cap \{P\geq 0\}.
$$
For illustration, we present $\mathcal{S}$ for $m=5$ in Figure \ref{fig: S}. The following proposition lists some basic properties of $\mathcal{S}$.
\begin{proposition}
\label{prop: basic prop of S1}
The set $\mathcal{S}$ has the following property:
\begin{enumerate}[(a)]
\item
For $m\geq 1$, the set $\mathcal{S}\cap\{X_2=0\}$ is a union of $\{p_0^+\}$ and a $1$-dimensional curve  $\Gamma:=\mathcal{S}\cap \{X_1=X_2=0\}$. For $m\geq 2$, the set $\Gamma$ is bounded;
\item
The variable $Y$ is positive in $\mathcal{S}$ for $m\geq 1$;
\item
For $m\geq 1$, the set $\mathcal{S}\cap\{Z=0\}$ is $\left\{p_0^+, \left(0,0,\sqrt{\frac{n-1}{6n}},0\right)\right\}$;
\item
The set $\mathcal{S}$ is compact for $m\geq 2$.
\end{enumerate}
\end{proposition}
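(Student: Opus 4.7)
The four claims should follow from the interplay of the defining inequalities of $\mathcal{S}$ with the conservation law \eqref{eqn:_new positive conservation law 1} and the bound $H^2\leq 1$ built into $\mathcal{E}$. I would work stratum by stratum on the boundary of $\mathcal{S}$, treating (a) and (c) together, then (b), then the compactness (d).

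For (a), the key observation is that setting $X_2=0$ reduces $A$ to $-\frac{3}{m}X_1 Z$, which combined with $X_1\geq X_2=0$ and $Z\geq 0$ forces $X_1Z=0$. The case $X_1=0$ produces exactly $\Gamma$; the case $Z=0,\,X_1>0$ collapses $P$ to $-X_1(1-H^2)/n$, whose non-negativity together with $H^2\leq 1$ pins $H^2=1$, hence $X_1=1/3$, and the conservation law then determines $Y=1/3$, recovering $p_0^+$. For boundedness of $\Gamma$ when $m\geq 2$, note that $H$ vanishes on $\Gamma$, so $B\geq 0$ becomes the explicit upper bound $YZ\leq \frac{m(2m+1)(8m+3)}{2n^3(2m+3)(m-1)}$; this, combined with the conservation hyperbola $6Y^2+4m(4m+8)YZ-12mZ^2=(n-1)/n$ in the first quadrant, confines $(Y,Z)$ to a compact arc. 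Part (c) then falls out: on $\{Z=0\}$, $A=YX_2$, so using (b) to rule out $Y=0$ forces $X_2=0$, and (a) leaves only $p_0^+$ and the explicit point $\bigl(0,0,\sqrt{(n-1)/(6n)},0\bigr)$ determined by the conservation law restricted to $\Gamma\cap\{Z=0\}$.

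Part (b) I would prove by contradiction. If $Y=0$, then $A\geq 0$ reads $-\frac{3}{m}Z(X_1+\tfrac{2m}{3}X_2)\geq 0$, and the sign constraints force $Z=0$ or $X_1=X_2=0$. The latter contradicts the conservation law, since $-12mZ^2$ cannot equal the positive quantity $(n-1)/n$. In the remaining case $Y=Z=0$, a direct computation factors $P$ as $-(X_1-X_2)\bigl[(1-H^2)/n+2X_2(X_1+\tfrac{2m}{3}X_2)\bigr]$; both factors are non-negative in $\mathcal{S}$, so $P\geq 0$ forces one of them to vanish, and a short case analysis on $X_1=X_2$, $H^2=1$, and $X_2=0$ exhausts the possibilities and produces a contradiction via the conservation law in each case.

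For (d), the conservation law bounds $(X_1-X_2)^2$ above by $(n-1)/(12m)$, while $H^2\leq 1$ bounds $|3X_1+4mX_2|$, and inverting this linear system bounds $X_1$ and $X_2$ individually. The constraint $B\geq 0$ then bounds $YZ$ from above, and plugging this back into the conservation law excludes both $Y\to\infty$ (where $6Y^2$ would be uncompensated) and $Z\to\infty$ (where $-12mZ^2$ would drive the left-hand side to $-\infty$). Closedness is automatic. The main obstacle, and the design principle behind $\mathcal{S}$, is precisely the polynomial $B$: its $YZ$-coefficient carries a factor of $(m-1)$, so $B$ is vacuous when $m=1$. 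This is exactly why the boundedness of $\Gamma$ in (a) and the compactness in (d) require $m\geq 2$, and signals that the $m=1$ case, treated separately in B\"ohm's original work on $\mathbb{HP}^2\sharp\overline{\mathbb{HP}}^2$, genuinely demands different arguments.
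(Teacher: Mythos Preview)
Your argument for (c) has a genuine gap. On $\{Z=0\}$ the polynomial $A$ reduces to $YX_2$, and since both $Y$ and $X_2$ are non-negative in $\mathcal{S}$, the constraint $A\geq 0$ is \emph{vacuous} there---it does not force $X_2=0$. The paper instead uses $P\geq 0$: with $Z=0$ one computes
\[
P=(X_2-X_1)\,\frac{1-H^2}{n}\;-\;2X_2Y^2\;-\;2X_2\Bigl(X_1+\tfrac{2m}{3}X_2\Bigr)(X_1-X_2),
\]
and each of the three terms is non-positive in $\mathcal{S}$, so $P\geq 0$ forces all three to vanish; since $Y>0$ by (b), the middle term gives $X_2=0$, after which (a) finishes the job. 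Note also that your stated order---treating (c) before (b)---is inconsistent with your own use of (b) inside (c); the paper proves (b) first.

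Your arguments for (a), (b), and (d) are essentially correct, and in (a) and (d) you take slightly different routes from the paper. For the boundedness of $\Gamma$, the paper combines $B\geq 0$ with the conservation law to deduce $mY-Z\geq 0$ and hence $\frac{n-1}{n}\geq 6Y^2+(4m+32)Z^2$; your approach of bounding $YZ$ directly from $B\geq 0$ and then arguing on the conservation hyperbola works equally well. For (d) the paper bounds $Y,Z$ via $A\geq 0$ (which yields $mY\geq (2m+3)Z$ when $X_2>0$) and falls back on (a) when $X_2=0$, whereas you use $B\geq 0$ to bound $YZ$; both succeed. One minor point: your claim that the conservation law alone bounds $(X_1-X_2)^2$ by $(n-1)/(12m)$ is not immediate, since that requires $6Y^2+4m(4m+8)YZ-12mZ^2\geq 0$. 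You can bypass this entirely: in $\mathcal{S}$ one has $0\leq X_2\leq X_1$ and $3X_1+4mX_2\leq 1$, which already gives $X_1\leq \tfrac{1}{3}$ directly.
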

\begin{proof}
Since $A\geq 0$ in $\mathcal{S}$, a point in $\mathcal{S}$ with vanishing $X_2$-coordinate must have $ZX_1=0$. If we further assume $Z=0$ at that point, then from $P\geq 0$ we have $X_1(1-H^2)=X_1(1-9X_1^2)\leq 0$. Hence we obtain $X_1=\frac{1}{3}$ and that point must be $p_0^+$. We obtain $\Gamma$ if $X_1=0$ is further assumed.

It is obvious that $\Gamma=\mathcal{E}\cap \{X_1=X_2=0\}$ for $m=1$ and the set is non-compact. We prove that $\Gamma$ is bounded for $m\geq 2$. From $B\geq 0$ and \eqref{eqn:_new positive conservation law2}, we have 
\begin{equation}
\begin{split}
&6Y^2+4m(4m+8)YZ-12mZ^2=\frac{n-1}{n}\geq (n-1)\frac{2n^2(2m+3)(m-1)}{m(2m+1)(8m+3)}YZ\\
&\Leftrightarrow 6Y^2+\left(4m(4m+8)-(n-1)\frac{2n^2(2m+3)(m-1)}{m(2m+1)(8m+3)}\right)YZ-12mZ^2\geq 0.
\end{split}
\end{equation}
For $m\geq 2$, the inequality above implies $mY-Z\geq 0$. Then from \eqref{eqn:_new positive conservation law2} we have 
$
\frac{n-1}{n}\geq 6Y^2+(4m+32)Z^2. 
$ The first claim is clear.

Suppose there is a point in $\mathcal{S}$ with vanishing $Y$-coordinate. From $A\geq 0$ we know that 
$
-\frac{3}{m}Z\left(X_1+\frac{2m}{3}X_2\right)=0
$
at that point. If $Z\neq 0$, then $X_1=X_2=Y=0$ at that point, which is impossible from \eqref{eqn:_new positive conservation law2}. If $X_1+\frac{2m}{3}X_2\neq 0$, then $Y=Z=0$ at that point. Then from \eqref{eqn:_new positive conservation law2}, we have $X_1-X_2\neq 0$. From $P\geq 0$ we know that 
$
-\frac{1}{n}(1-H^2)-2X_2\left(X_1+\frac{2m}{3}X_2\right)\geq 0
$
at that point. The point has to be $\left(\frac{1}{3},0,0,0\right)$, which does not lie on $\mathcal{E}$. The above discussion proves the second claim.

Since $P\geq 0$ on a point with vanishing $Z$-coordinate in $\mathcal{S}$, we have 
$$
(X_2-X_1)\frac{1}{n}(1-H^2)-2X_2Y^2-2X_2\left(X_1+\frac{2m}{3}X_2\right)(X_1-X_2)\geq 0.
$$
By the definition of $\mathcal{S}$, each term in the above inequality is non-positive. Since $Y>0$ from the second claim, the variable $X_2$ must vanish and the third claim is clear.

Finally, from $A\geq 0$ and $X_1-X_2\geq 0$ in $\mathcal{S}$, we know that $X_2(Y-\frac{2m+3}{m}Z)\geq 0$ in $\mathcal{S}$. If $X_2\neq 0$, then $mY\geq (2m+3)Z$ and the boundedness of all variables is obtained from \eqref{eqn:_new positive conservation law2}. If $X_2=0$, then the boundedness comes from the first claim. Hence $\mathcal{S}$ is a compact set.
\end{proof}
\begin{figure}[h!]
\begin{subfigure}{.33\textwidth}
  \centering
  \includegraphics[clip,width=1\linewidth]{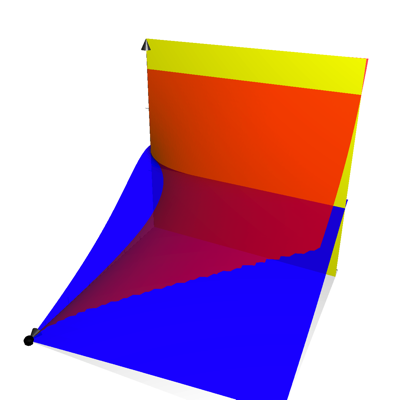}\caption{$m=5$}
\end{subfigure}
\begin{subfigure}{.33\textwidth}
  \centering
  \includegraphics[clip,width=1\linewidth]{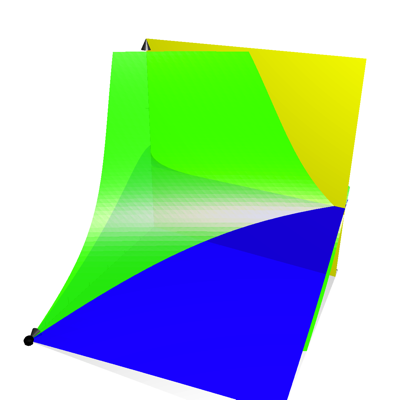}
  \caption*{}
\end{subfigure}
\begin{subfigure}{.33\textwidth}
  \centering
  \includegraphics[clip,width=1\linewidth]{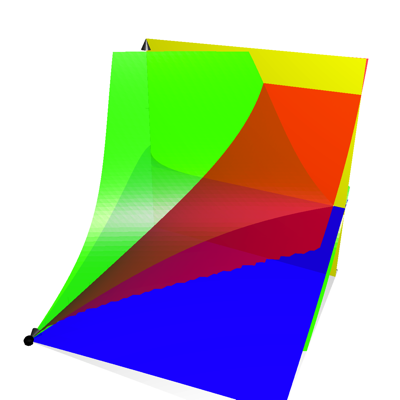}
\caption*{}
\end{subfigure}\\
\caption{Figures above illustrate the set $\mathcal{S}$ for $m=5$. The variable $Y$ is replaced by a function in $(X_1, X_2, Z)$ obtained from \eqref{eqn:_new positive conservation law2}. From readers' point of view, points in $\mathcal{S}$ are at the front of $X_1-X_2=0$ (yellow), below $A=0$ (red), behind $B=0$ (green) and above $P=0$ (blue).}
\label{fig: S}
\end{figure}

The case $m=1$ is very special. In the following proposition, we show that for $m=1$, the defining inequalities $A\geq 0$ and $P\geq 0$ must be equalities. The set $\mathcal{S}$ is closely related to the integral curves $\gamma_\infty$ in Remark \ref{rem: gamma_infty for m=1} and $\Phi$ in Remark \ref{rem: jensen cone}.

\begin{proposition}
\label{prop: S1 for m=1}
For $m=1$, the set $\mathcal{S}$ is the union 
$$ \{p_0^+,p_2^+\}\cup \gamma_{\infty} \cup\Gamma\cup \left(\Phi\cap\{X_1,X_2> 0\}\right).$$
\end{proposition}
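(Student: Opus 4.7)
The case $m=1$ is special because the coefficient $\frac{2n^{2}(2m+3)(m-1)}{m(2m+1)(8m+3)}$ appearing in $B$ vanishes, so $B=\frac{1-H^{2}}{n}\geq 0$ is automatic on $\mathcal{E}$ and the inequality $B\geq 0$ plays no role. Thus for $m=1$ the set $\mathcal{S}$ is cut out of $\mathcal{E}$ by $X_{1}\geq X_{2}\geq 0$, $A\geq 0$ and $P\geq 0$. The plan is to show that the last two inequalities must be satisfied with equality on $\mathcal{S}\cap\{X_{2}>0\}$, which forces $\mathcal{S}$ to be $1$-dimensional and allows us to read off the irreducible components.

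The slice $\mathcal{S}\cap\{X_{2}=0\}$ is $\{p_{0}^{+}\}\cup\Gamma$ by Proposition~\ref{prop: basic prop of S1}(a), accounting for those pieces of the union. For the easy inclusion on $\{X_{2}>0\}$, I would verify by direct substitution that $A=P=0$ on each of the remaining claimed components. On $\Phi\cap\{X_{1},X_{2}>0\}$ we have $X_{1}=X_{2}$ and $Y=5Z$ (for $m=1$), which gives $A=5X_{2}Z-3X_{2}Z-2X_{2}Z=0$ immediately, while $R_{1}=54Z^{2}=R_{2}$ in this locus, so
\begin{equation*}
P=(X_{1}-X_{2})\bigl[R_{1}-\tfrac{1-H^{2}}{n}-2X_{2}(X_{1}+\tfrac{2}{3}X_{2})\bigr]=0.
\end{equation*}
On $\gamma_{\infty}\subset \mathcal{B}_{\spin(7)}$ we have $X_{2}=3Z$, $Y=X_{1}+2Z$ and $H=1$, which gives $A=3Z(X_{1}+2Z)-3X_{1}Z-6Z^{2}=0$, and a short direct computation substituting these relations into $P$ yields $P=0$. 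The critical points $p_{0}^{+}$ and $p_{2}^{+}$ lie in the closures of these curves.

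For the harder reverse inclusion $\mathcal{S}\cap\{X_{2}>0\}\subset\gamma_{\infty}\cup(\Phi\cap\{X_{1},X_{2}>0\})\cup\{p_{2}^{+}\}$, the strategy is to produce a polynomial identity, valid modulo the conservation law $\mathcal{C}_{\Lambda\geq 0}$, of the schematic form
\begin{equation*}
\alpha_{1}P+\alpha_{2}A+\sum_{i}c_{i}S_{i}=0,
\end{equation*}
where $\alpha_{1},\alpha_{2}$ are strictly positive on $\{X_{1}>X_{2}>0,\;Y,Z>0\}$, each $c_{i}\geq 0$, and each $S_{i}$ is a manifestly nonnegative polynomial on that region whose zero locus is contained in $\mathcal{B}_{\spin(7)}\cup\{X_{1}=X_{2},\,Y=5Z\}$. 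Natural building blocks for the $S_{i}$ are products such as $(X_{2}-3Z)(X_{1}-X_{2})$ and $(Y-X_{1}-2Z)(Y-5Z)$, together with squares of the defining polynomials of the two target curves. Such an identity forces $A=P=S_{i}=0$ on $\mathcal{S}\cap\{X_{2}>0\}$, and the vanishing pattern of the $S_{i}$ then pins each point to one of the two components, with $p_{2}^{+}$ appearing as the common limit.

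The main obstacle is finding the correct identity cleanly: $A$ and $P$ are of mixed degree in $(X_{1},X_{2},Y,Z)$, and the conservation law mixes quadratic and linear contributions, so some trial and error will be needed to pin down the multipliers and the square terms. If a direct identity proves elusive, an alternative route is to use $A=0$ to eliminate $Y$, substitute the resulting expression into $P$ and into $\mathcal{C}_{\Lambda\geq 0}$, and show directly that the outcome is a polynomial in $(X_{1},X_{2},Z)$ which on $\{X_{1}\geq X_{2}\geq 0,\;Z\geq 0\}$ factors as a product whose zero loci in $\mathcal{E}$ are exactly $\mathcal{B}_{\spin(7)}$ and $\{X_{1}=X_{2},\,Y=5Z\}$; this resultant computation is a finite, mechanical check once the variables have been reduced.
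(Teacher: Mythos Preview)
Your overall plan---reduce to $\{X_{2}>0\}$ via Proposition~\ref{prop: basic prop of S1}(a) and then show that the pair of inequalities $A\geq 0$, $P\geq 0$ forces $A=P=0$---is exactly the paper's strategy, and your verification of the easy inclusion is correct. The gap is in the hard direction. An identity of the form $\alpha_{1}P+\alpha_{2}A+\sum c_{i}S_{i}=0$ with $\alpha_{i}>0$ and $S_{i}\geq 0$ does not exist in the clean shape you hope for. The paper's identity (valid on $\mathcal{C}_{\Lambda\geq 0}$ for $m=1$) is
\[
P=(X_{1}-X_{2})\Bigl(6YZ-2X_{2}\bigl(X_{1}+\tfrac{2}{3}X_{2}\bigr)\Bigr)-\tfrac{1}{7}(X_{1}-X_{2})(1-H^{2})-2(Y-Z)A,
\]
so that $P+2(Y-Z)A+\tfrac{1}{7}(X_{1}-X_{2})(1-H^{2})$ equals the first bracket times $X_{1}-X_{2}$. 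The last two terms on the right are visibly nonpositive on $\mathcal{S}$, but the factor $6YZ-2X_{2}(X_{1}+\tfrac{2}{3}X_{2})$ has no a~priori sign, and this is the entire difficulty. The paper handles it by a genuine case split: using the conservation law it rewrites that factor and splits on the sign of $X_{1}-X_{2}-Y+5Z$; in each case one first extracts $X_{2}\geq 3Z$ from $A\geq 0$ and then bounds $6YZ-2X_{2}(X_{1}+\tfrac{2}{3}X_{2})\leq 0$ directly. Only then do all three terms become nonpositive, forcing $P=0$, $A=0$ and $H=1$, after which the $\mathcal{B}_{\spin(7)}$ equations $X_{2}=3Z$, $Y=X_{1}+2Z$ drop out. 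Your proposed building blocks (products like $(X_{2}-3Z)(X_{1}-X_{2})$ and squares of defining polynomials) do not by themselves control the sign of $6YZ-2X_{2}(X_{1}+\tfrac{2}{3}X_{2})$, so the ``some trial and error'' you anticipate would have to rediscover this case analysis rather than produce a single positivity identity.

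Your fallback route is circular as written: eliminating $Y$ via $A=0$ presupposes $A=0$, which is precisely what must be proved. Computing $P$ on the locus $\{A=0\}$ and factoring it tells you what $\{A=0,\,P=0\}$ looks like, but says nothing about why the \emph{inequalities} $A\geq 0$, $P\geq 0$ collapse to equalities on $\mathcal{S}$; you are back to needing a sign argument of the type above.
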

\begin{proof}
It is apparent from Figure \ref{fig: 3dm=1} that the proposition holds. Consider $\mathcal{S}$ with $m=1$. If $X_2=0$ is imposed, we obtain either the point $p_0^+$ or $\Gamma$ from Proposition \ref{prop: basic prop of S1} (a). Hence we assume $X_2>0$ in the following. From $A\geq 0$ it is clear that $X_2(Y-5Z)\geq YX_2-3Z\left(X_1+\frac{2}{3}X_2\right)\geq 0$ and hence $Y-5Z\geq 0$.

One can easily verify that
\begin{equation}
\label{eqn:_AP for m=1}
\begin{split}
P&=(X_1-X_2)\left(6YZ-2X_2\left(X_1+\frac{2}{3}X_2\right)\right)-\frac{1}{7}(X_1-X_2)(1-H^2)-2(Y-Z)A
\end{split}.
\end{equation}
If $X_1=X_2=X>0$, the first two terms in \eqref{eqn:_AP for m=1} vanish while the last term is non-positive. Then we must have $A=0$, from which we deduce $Y-5Z= 0$. By \eqref{eqn:_jensen based cone} we obtain the line segment $\Phi\cap \{X_1,X_2> 0\}$. 

For $X_1\neq X_2$, we rewrite \eqref{eqn:_AP for m=1} as follows:
\begin{equation}
\label{eqn:_AP for m=1b}
\begin{split}
P=(X_1-X_2)\left(6YZ-2X_2\left(X_1+\frac{2}{3}X_2\right)-\frac{10}{147}(1-H^2)\right)-\frac{11}{147}(X_1-X_2)(1-H^2)-2(Y-Z)A
\end{split}.
\end{equation}
The last two terms in \eqref{eqn:_AP for m=1b} are non-positive. Hence from $P\geq 0$ we have
\begin{equation}
\label{eqn:_m=1 inequality}
\begin{split}
0&\leq 6YZ-2X_2\left(X_1+\frac{2}{3}X_2\right)-\frac{10}{147}(1-H^2)\\
&= 6YZ-2X_2\left(X_1+\frac{2}{3}X_2\right)-\frac{10}{147}\left(\frac{7}{6}\left(6Y^2+48YZ-12Z^2+\frac{12}{7}(X_1-X_2)^2\right)-H^2\right)\\
&\quad \text{by \eqref{eqn:_new positive conservation law2}}\\
&= \frac{1}{21}(5X_1+4X_2+5Y+2Z)(X_1-X_2-Y+5Z)\\
&\quad +\frac{1}{21}\left(5X_1+4X_2-5Y-2Z\right)(X_1-X_2+Y-5Z).
\end{split}
\end{equation}
Suppose $X_1-X_2-Y+5Z\leq 0$, then the first term in the last line of \eqref{eqn:_m=1 inequality} is non-positive. As the summation above is non-negative, we know that the second term in the last line of \eqref{eqn:_m=1 inequality} must be non-negative. As $X_1-X_2+Y-5Z\geq 0$, we must have $\frac{1}{5}\left(5X_1+4X_2-2Z\right)\geq Y$. From $A\geq 0$ we have 
$$
\frac{X_2}{5}\left(5X_1+4X_2-2Z\right)\geq 3Z\left(X_1+\frac{2}{3}X_2\right) \Leftrightarrow (5X_1+4X_2)(X_2-3Z)\geq 0 \Leftrightarrow X_2-3Z\geq 0.
$$
Then we claim that the first term in \eqref{eqn:_AP for m=1} is non-positive since
\begin{equation}
\begin{split}
6YZ-2X_2\left(X_1+\frac{2}{3}X_2\right)&\leq \frac{6}{5}\left(5X_1+4X_2-2Z\right)Z-2X_2\left(X_1+\frac{2}{3}X_2\right)\\
&\leq \frac{2}{5}\left(5X_1+4X_2-\frac{2}{3}X_2\right)X_2-2X_2\left(X_1+\frac{2}{3}X_2\right)\\
&=0.
\end{split}
\end{equation}
But $P\geq 0$. Hence assumptions $X_1\neq X_2$ and $X_1-X_2-Y+5Z\leq 0$ lead to the vanishing of $A$ and $P$.

Suppose $X_1-X_2-Y+5Z\geq 0$. Then $A\geq 0$ implies
$$
(X_1-X_2+5Z)X_2\geq 3Z\left(X_1+\frac{2}{3}X_2\right) \Leftrightarrow X_2\geq 3Z.
$$
Then we claim that the first term in \eqref{eqn:_AP for m=1} is also non-positive since
\begin{equation}
\begin{split}
6YZ-2X_2\left(X_1+\frac{2}{3}X_2\right)&\leq 6(X_1-X_2+5Z)Z-2X_2\left(X_1+\frac{2}{3}X_2\right)\\
&\leq 2\left(X_1-X_2+\frac{5}{3}X_2\right)X_2-2X_2\left(X_1+\frac{2}{3}X_2\right)\\
&=0.
\end{split}
\end{equation}
Hence the assumption $X_1\neq X_2$ and $X_1-X_2-Y+5Z\geq 0$ also leads to the vanishing of $A$ and $P$. 

Therefore, points in $\mathcal{S}$ with $X_1\neq X_2$ must have vanished $A$ and $P$, which leads to the following equalities.
$$
H=1,\quad 3Z=X_2,\quad 3Y=3X_1+2X_2.
$$
Note that the last two equations above are equivalent to the defining equations in \eqref{eqn:_characterization of gamma_infty} for $m=1$. We obtain $\gamma_\infty$ and critical points $p_0^+$ and $p_2^+$.
\end{proof}
\begin{remark}
\label{rem: integral curve chi}
For $m=1$, there is another heterocline $\chi$ that also lies on the invariant set $\mathcal{B}_{\spin(7)}$. The integral curve joins $q_3^+$ and $p_2^+$. Note that from the proof to Proposition \ref{prop: S1 for m=1}, it is clear that $X_1-X_2$ and $Y-5Z$ are positive along $\gamma_\infty$. These two polynomials are negative along $\chi$ and hence the integral curve is not in $\mathcal{S}$.
\end{remark}
\begin{figure}[h!] 
\begin{subfigure}{.24\textwidth}
  \centering 
  \includegraphics[clip,width=1\linewidth]{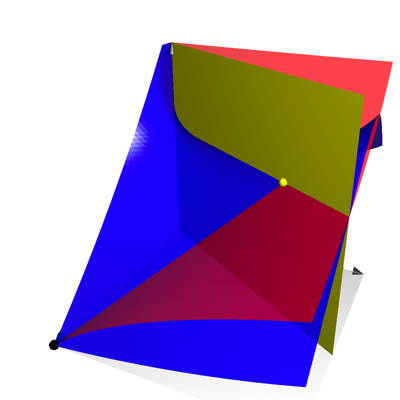}
\end{subfigure}
\begin{subfigure}{.24\textwidth}
  \centering
  \includegraphics[clip,width=1\linewidth]{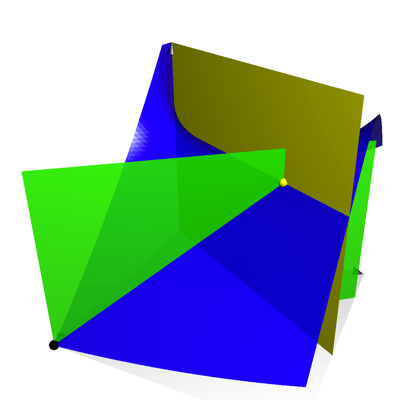}
\end{subfigure}
\begin{subfigure}{.24\textwidth}
  \centering
  \includegraphics[clip,width=1\linewidth]{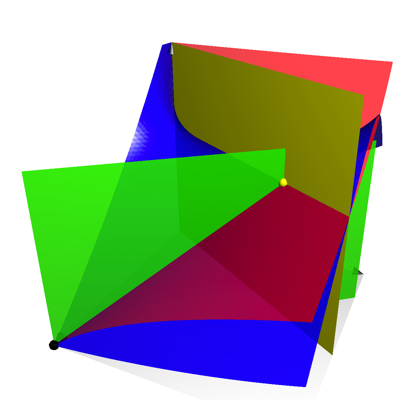}
\end{subfigure}
\begin{subfigure}{.24\textwidth}
  \centering
  \includegraphics[clip,width=1\linewidth]{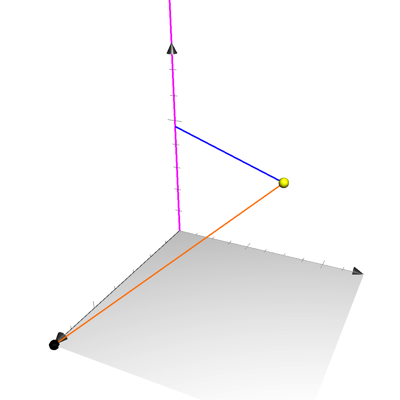}
\end{subfigure}
\caption{Figures above illustrate the set $\mathcal{S}$ for $m=1$ in the $X_1X_2Z$-space. The set $\mathcal{S}$ is a 1-dimensional set as shown in the last figure. The orange line segment is $\gamma_\infty$ that joins $p_0^+$ (black) and $p_2^+$ (yellow), the blue line is part of $\Phi$ and the magenta line is the non-compact part of $\mathcal{S}$ with vanished $X_1$ and $X_2$.}
\label{fig: 3dm=1}
\end{figure}

With Proposition \ref{prop: S1 for m=1} established, we can take $m=1$ as our ``initial case'' for further analysis of cases with $m>1$. In particular, we prove the following technical proposition.
\begin{proposition}
\label{prop: Q is nonnegative}
For $m\geq 1$, the inequality $Q\geq 0$ holds on the set $\mathcal{S}\cap \{P=0\}$. For $m=1$, the inequality reaches equality only at $\{p_0^+, p_2^+\} \cup\gamma_\infty\cup\Gamma$. For $m\geq 2$, the inequality reaches equality only at $p_0^+$, a point on $\Phi$, and points in $\Gamma$. 
\end{proposition}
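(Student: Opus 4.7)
The plan is to exhibit an algebraic identity expressing $Q$ as a manifestly non-negative combination of $P$ together with the defining inequalities of $\mathcal{S}$. Concretely, I would seek a decomposition of the form
$$Q = \alpha\, P + \beta\, A + \gamma\, X_2(X_1-X_2) + \delta\, B + \epsilon\, \sigma,$$
where $\sigma$ vanishes on $\mathcal{E}$ (so that the conservation law \eqref{eqn:_new positive conservation law2} can be freely added) and each of $\alpha, \beta, \gamma, \delta$ is a polynomial in $(X_1, X_2, Y, Z)$ verifiably non-negative throughout $\mathcal{S}$. Since $Y>0$ on $\mathcal{S}$ by Proposition~\ref{prop: basic prop of S1}(b), powers of $Y$ may legitimately appear in these coefficients. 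The geometric motivation for such a formula is that $Q$ should control the component of $\langle \nabla P, V\rangle$ pointing into $\{P\geq 0\}$; the later dynamical arguments in Section~\ref{sec: Existence of the first Einstein metric} will exploit exactly this forward-invariance to confine the integral curves $\gamma_{s_1}$ inside $\mathcal{S}$.

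To find such an identity I would use the $m=1$ case as a template. Proposition~\ref{prop: S1 for m=1} describes $\mathcal{S}$ explicitly as a union of four pieces, so evaluating $Q$ along $\gamma_\infty$ (where $H=1$, $X_2=3Z$, $X_1=Y-2Z$), along $\Phi$ (where $X_1=X_2$ and $Y=(2m+3)Z$, which in fact forces $R_1=R_2$ so that $P\equiv 0$), along $\Gamma$ (where $X_1=X_2=0$), and at the critical points $p_0^+$ and $p_2^+$ converts the positivity claim into a small list of concrete polynomial identities. These boundary tests pin down the leading behaviour of the coefficients $\alpha, \beta, \gamma, \delta$, after which a symbolic ansatz can be verified. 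A short computation on $\Phi$ for general $m$ shows that $A = \frac{2m+3}{m}(m-1)ZX_2$ vanishes identically when $m=1$, so the coefficient structure on this stratum is genuinely different for $m=1$ versus $m\geq 2$, consistent with the two cases in the statement.

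For the equality analysis I would stratify $\mathcal{S}\cap\{P=0\}$. On $\Gamma$ the vanishing $X_1=X_2=0$ forces $H=0$ and every monomial of $Q$ vanishes, yielding $Q\equiv 0$. At $p_0^+$ the simultaneous vanishing of $1-H^2$, $X_2$, and $Z$ kills $Q$. On $\Phi$, since $P\equiv 0$ identically, the restriction $Q|_\Phi$ becomes a polynomial in the single parameter $X=X_1=X_2$; for $m\geq 2$ one checks that this polynomial has only one root in the admissible range (where the additional constraints $A\geq 0$ and $B\geq 0$ are satisfied), while for $m=1$ it vanishes along the whole open segment $\Phi\cap\{X_1,X_2>0\}\subset\mathcal{S}$. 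For $m=1$ it remains to inspect $\gamma_\infty$: substituting $H=1$ and the $\mathcal{B}_{\spin(7)}$ relations $X_2=3Z$, $X_1=Y-2Z$ into $Q$ reduces it to a direct algebraic identity that can be checked by hand.

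The hard part will be producing a valid decomposition in the first place and, for $m\geq 2$, ruling out spurious equality points on $\Phi$. Such identities are rarely unique, and naive ansatzes typically produce coefficient polynomials that change sign inside $\mathcal{S}$, forcing corrections pulled from the conservation law. A computer-algebra exploration guided by the explicit $m=1$ skeleton provided by Proposition~\ref{prop: S1 for m=1} and by the prescribed equality locus is probably the most efficient route to the correct identity.
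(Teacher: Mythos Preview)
Your plan diverges from the paper's proof in a fundamental way, and as written it has a genuine gap together with a factual slip.

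\textbf{What the paper actually does.} The paper does \emph{not} seek a Positivstellensatz-style identity $Q=\alpha P+\beta A+\cdots$. Instead it slices $\mathcal{S}\cap\{P=0\}$ by the ratio $\kappa=X_2/X_1\in[0,1]$. On each slice $\mathcal{L}_\kappa$ the two equations \eqref{eqn:_new positive conservation law2} and $P=0$ eliminate $X_1^2$, so that $Q$, $A$, $B$ all reduce to quadratics $\tilde Q_{m,\kappa}$, $\tilde A_{m,\kappa}$, $\tilde B_{m,\kappa}$ in the single variable $x=Z/Y$. Since the leading coefficient $q_2$ of $\tilde Q_{m,\kappa}$ is negative, it suffices to check $\tilde Q_{m,\kappa}\geq 0$ at the two endpoints of the admissible $x$-interval, which are furnished by $A=0$ (upper endpoint $x=m\kappa/(3+2m\kappa)$) and $B=0$ (lower endpoint $x=\sigma(m,\kappa)$). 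The $A=0$ endpoint is handled by a direct factorisation producing an explicit factor $(m-1)$. The $B=0$ endpoint is handled by computing the resultant $r(\tilde Q_{m,\kappa},\tilde B_{m,\kappa})$, showing it is strictly negative for $m>1$, and using the $m=1$ case plus a first-order perturbation in $m$ to conclude that $\sigma(m,\kappa)$ lies between the roots of $\tilde Q_{m,\kappa}$ for all $m\geq 1$.

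\textbf{The gap in your proposal.} You have outlined a strategy but not a proof: the entire content of the proposition is hidden in the unproduced identity, and you yourself flag this as ``the hard part''. There is no a priori reason a decomposition with coefficients $\alpha,\beta,\gamma,\delta$ that are \emph{polynomials nonnegative on all of} $\mathcal{S}$ should exist in the simple linear form you wrote; Positivstellensatz guarantees only that some higher-degree SOS combination works, and such certificates are typically found by semidefinite programming rather than by inspection. The paper's slicing-plus-resultant argument sidesteps this completely by reducing to a one-variable problem where the endpoint analysis is explicit.

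\textbf{A factual slip.} For $m=1$ you assert that $Q$ vanishes along the whole open segment $\Phi\cap\{X_1,X_2>0\}$. This is false: the paper's computation of $\left.Q\right|_{\mathcal{L}_1}$ gives, for $m=1$, a quantity proportional to $X\bigl(\tfrac{1}{n^2}-X^2\bigr)$ with $n=7$, which is strictly positive for $0<X<\tfrac{1}{7}$ and vanishes only at the endpoint $p_2^+$. This is consistent with the statement of the proposition, whose $m=1$ equality locus does not include the open $\Phi$-segment.
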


\begin{proof}
We consider $\mathcal{S}\cap\{P=0\}$ as a union of slices
$$
\mathcal{S}\cap\{P= 0\}=\bigcup_{\kappa\in[0,1]}\mathcal{L}_\kappa,\quad \mathcal{L}_\kappa:=\mathcal{S}\cap\{P= 0\}\cap\{X_2-\kappa X_1=0\}.
$$
Note that each $\mathcal{L}_\kappa$ contains $\Gamma$. As $X_1\geq X_2\geq 0$ in $\mathcal{S}$, the function $Q$ vanishes once $X_1$ does. We assume $X_1>0$ in the following discussion.

For $\mathcal{L}_0$, we have
$$\left.Q\right|_{\mathcal{L}_0}=-2(4m+8)X_1YZ+2X_1\frac{1-H^2}{n}.$$ 
From $A\geq 0$ we have $Z=0$. Then $\left.Q\right|_{\mathcal{L}_0}=2X_1\frac{1-H^2}{n}$, and $P=0$ becomes $-X_1\frac{1-H^2}{n}=0$. Hence $\mathcal{L}_0=\Gamma\cup \{p_0^+\}$ and $\left.Q\right|_{\mathcal{L}_0}= 0$.

If $\kappa=1$, let $X:=X_1=X_2>0$. Then $A\geq 0$ and $P= 0$ respectively become
$$
X\left(Y-\frac{2m+3}{m}Z\right)\geq 0,\quad X(Y-(2m+3)Z)(Y-Z)= 0.
$$
For $X>0$ we must have $Y= (2m+3)Z$. Then from \eqref{eqn:_new positive conservation law2} we find that $Y=(2m+3)Z=(2m+3)z_0$. Hence $\mathcal{L}_1$ is a union of $\Gamma$ and a part of the invariant set \eqref{eqn:_jensen based cone}. More specifically, from $B\geq 0$ we have
\begin{equation}
\label{eqn:_cancel 1 for k=1}
\begin{split}
\frac{1}{n}-nX^2-\frac{2n^2(2m+3)(m-1)}{m(2m+1)(8m+3)}YZ=\frac{1}{n}-nX^2-\frac{2n^2(2m+3)^2(m-1)}{m(2m+1)(8m+3)}z_0^2\geq 0
\end{split}.
\end{equation}
and it follows that
\begin{equation}
\mathcal{L}_1=\Gamma \cup \left(\Phi \cap \left\{0<X\leq \frac{1}{n}\sqrt{\frac{36m^3+90m^2+117m+54}{m(8m+3)(4m^2+14m+9)}}\leq \frac{1}{n}\right\}\right)
\end{equation}
for $m\geq 2$. For $m=1$, we have 
\begin{equation}
\mathcal{L}_1=\Gamma \cup \left(\Phi \cap \left\{0<X\leq \frac{1}{n}\right\}\right)\cup \{p_2^+\}.
\end{equation}
And $\left.Q\right|_{\mathcal{L}_1}$ becomes
\begin{equation}
\begin{split}
&\left.Q\right|_{\mathcal{L}_1}\\
&=X\left(-4Y^2-(4m+8)(4m+2)YZ+(n+1)\left(\frac{1}{n}-nX^2\right)+4\left(1+\frac{2m}{3}\right)\left(n+\frac{2m}{3}\right)X^2\right)\\
&= X\left(-4(2m+3)^2z_0^2-(4m+8)(4m+2)(2m+3)z_0^2+\frac{n+1}{n}-\frac{4}{9}m(8m+3)X^2\right)\\
&= \frac{4}{9}m(8m+3)X\left(\frac{1}{n^2}\frac{36m^3+90m^2+117m+54}{m(8m+3)(4m^2+14m+9)}-X^2\right)\\
&\geq 0.
\end{split}
\end{equation}
Hence for $m\geq 1$, the function $\left.Q\right|_{\mathcal{L}_1}\geq 0$ and it only vanishes on $\Gamma$ and boundary points of $\mathcal{L}_1\cap \Phi$. Note that for $m=1$, one of the boundary points of $\mathcal{L}_1\cap \Phi$ is $p_2^+$.

For each $\mathcal{L}_\kappa$ with $\kappa\in(0,1)$, replace $X_2$ with $X_1\kappa$. Then \eqref{eqn:_new positive conservation law2} and $P= 0$ respectively become
\begin{equation}
\label{eqn:_cancel 1}
\begin{split}
&\frac{n-1}{n}-\frac{12m}{n}(1-\kappa)^2X_1^2=6Y^2+4m(4m+8)YZ-12mZ^2,\\
&\frac{\kappa-1}{n}-\frac{\kappa-1}{3n}(32\kappa^2m^2-12\kappa^2m+48m\kappa-18\kappa+27)X_1^2= 2\kappa Y^2-(4m+8)YZ+(4m\kappa+6)Z^2.
\end{split}
\end{equation}

With the equations above, we can write the constant $1$ as a homogeneous polynomial $One(Y,Z)$ of degree $2$. Multiplying the constant term in $B$ by $One(Y, Z)$, the function $B$ restricted to each $\mathcal{L}_\kappa\cap \{P=0\}$ then becomes a homogeneous polynomial in $Y$ and $Z$. Factor out $X_1$ in $Q$ and multiply the constant term in $\frac{Q}{X_1}$ by $One(Y,Z)$. We see that $Q$ restricted on each $\mathcal{L}_\kappa\cap \{P=0\}$ is a homogeneous polynomial in $X_1$, $Y$ and $Z$. In summary, we have
\begin{equation}
\label{eqn:_homogenize QAB}
\begin{split}
&\left.A\right|_{\mathcal{L}_\kappa\cap \{P=0\}}=X_1\left(Y\kappa-\frac{3}{m}Z\left(1+\frac{2m}{3}\kappa\right)\right),\\ 
&\left.B\right|_{\mathcal{L}_\kappa\cap \{P=0\}}=b_2Z^2+b_1YZ+b_0Y^2,\quad \left.Q\right|_{\mathcal{L}_\kappa\cap \{P=0\}}=X_1(q_2 Z^2+q_1YZ+q_0 Y^2).
\end{split}
\end{equation}
The coefficients $b_i$ and $q_i$ in \eqref{eqn:_homogenize QAB} are rational functions in $m$ and $\kappa$. Explicit formulas for each coefficient are presented in the Appendix for the sake of simplicity. 
As $Y$ is positive in $\mathcal{S}$ from Proposition \ref{prop: basic prop of S1}, we factor out $Y$ and consider $\left.A\right|_{\mathcal{L}_\kappa\cap \{P=0\}}=X_1Y\tilde{A}_{m,\kappa}\left(\frac{Z}{Y}\right)$, $\left.B\right|_{\mathcal{L}_\kappa\cap \{P=0\}}=Y^2\tilde{B}_{m,\kappa}\left(\frac{Z}{Y}\right)$ and $\left.Q\right|_{\mathcal{L}_\kappa\cap \{P=0\}}=X_1Y^2\tilde{Q}_{m,\kappa}\left(\frac{Z}{Y}\right)$, where
\begin{equation}
\label{eqn:_tilde QAB}
\tilde{A}_{m,\kappa}(x)=\kappa-\left(\frac{3}{m}+2\kappa\right)x,\quad 
\tilde{B}_{m,\kappa}(x)=b_2 x^2+b_1 x+b_0,\quad \tilde{Q}_{m,\kappa}(x)=q_2 x^2+q_1 x+q_0.
\end{equation}

We have
\begin{equation}
\label{eqn:_coefficient q2}
q_2=-\frac{4(2\kappa m+3)(32\kappa^3m^3+\kappa^2m^2(96-68\kappa)+\kappa m(90-84\kappa)+27(1-\kappa))}{3(1-\kappa)(2\kappa^2m(8m-5)+6\kappa(4m-1)+9)}\leq 0
\end{equation}
for any $(m,\kappa)\in [1,\infty)\times (0,1)$. Therefore, the restricted function $\left.Q\right|_{\mathcal{L}_\kappa\cap \{P=0\}}$ is non-negative if it is so on the boundary of $\mathcal{L}_\kappa\cap \{P=0\}$. 
The upper bound and the lower bound of $\frac{Z}{Y}$ on each slice are respectively provided by $\left.A\right|_{\mathcal{L}_\kappa\cap \{P=0\}}\geq 0$ and $\left.B\right|_{\mathcal{L}_\kappa\cap \{P=0\}}\geq 0$. Specifically, from $\left.A\right|_{\mathcal{L}_\kappa\cap \{P=0\}}\geq 0$ we have $\frac{Z}{Y}\leq \frac{m\kappa}{3+2m\kappa}$. As shown in \eqref{eqn:_coefficients for tildeB}, we have $b_2<0$. By Proposition \ref{prop: some property of B} in the Appendix, the smaller real root $\sigma(m,\kappa)$ of $\tilde{B}_{m,\kappa}$ is in the interval $(0,\frac{m\kappa}{3+2m\kappa})$. Hence from $\left.B\right|_{\mathcal{L}_\kappa\cap \{P=0\}}\geq 0$ we have $\frac{Z}{Y}\geq \sigma(m,\kappa)$. By the arbitrariness of $\kappa$, it is clear that the minimizing point of $Q$ on ${\mathcal{S}\cap\{P=0\}}$ lies on $\mathcal{S}\cap\{P=0\}\cap\{A=0\}$ or $\mathcal{S}\cap\{P=0\}\cap\{B=0\}$.

We have 
\begin{equation}
\label{eqn:_Q restricted at A=0}
\begin{split}
\left.Q\right|_{\mathcal{L}_\kappa\cap \{A=0\}}&=X_1Y^2\tilde{Q}_{m,\kappa}\left(\frac{m\kappa}{3+2m\kappa}\right)\\
&=X_1Y^2\frac{4(m+3)(m-1)(4\kappa^3m^2(8m-1)+4\kappa^2 m(8m-3)+18\kappa m+9(1-\kappa))\kappa^2}{3(2\kappa m+3)(1-\kappa)(2\kappa^2m(8m-5)+6\kappa(4m-1)+9)}\\
&\geq 0.
\end{split}
\end{equation}
Therefore, for $m\geq 2$, the function $Q$ is positive on $\mathcal{L}_\kappa\cap \{A=0\}$ for any $\kappa\in(0,1)$. For $m=1$, the function $Q$ vanishes at $\mathcal{S}\cap \{P=0\}\cap \{A=0\}$.

Proving the non-negativity of $\left.Q\right|_{\mathcal{L}_\kappa\cap \{B=0\}\cap \{P=0\}}$ is a bit more computationally involved. From Proposition \ref{prop: S1 for m=1}, we know that for $m=1$, the polynomial $A$, $B$ and $P$ identically vanish at $\gamma_\infty$. Therefore, an explicit formula for the root $\sigma(1,\kappa)$ can be obtained from $A=0$. We have
$$
\sigma(1,\kappa)=\frac{Z}{Y}=\frac{X_2}{3\left(X_1+\frac{2}{3}X_2\right)}=\frac{\kappa}{2\kappa+3}.
$$
Define the function $F(m,\kappa):=\tilde{Q}_{m,\kappa}(\sigma(m,\kappa))$. To show that  $\left.Q\right|_{\mathcal{L}_\kappa\cap \{P=0\}\cap \{B=0\}}\geq 0$, it suffices to show that $F(m,\kappa)\geq 0$ for any $(m,\kappa)\in [1,\infty)\times (0,1)$. Note that the vanishing of $F(m,\kappa)$ means $\sigma(m,\kappa)$ being also a root of $\tilde{Q}_{m,\kappa}$. From the computation \eqref{eqn:_Q restricted at A=0} we have 
$$F(1,\kappa):=\tilde{Q}_{1,\kappa}(\sigma(1,\kappa))=\tilde{Q}_{1,\kappa}\left(\frac{\kappa}{3+2\kappa}\right)=0$$
for any $\kappa\in (0,1)$. Furthermore, by implicit derivative, we have 
$$
\left(\frac{\partial \sigma}{\partial m}\right)(1,\kappa)=\left(-\frac{\frac{\partial b_2}{\partial m}\sigma^2+\frac{\partial b_1}{\partial m}\sigma+\frac{\partial b_0}{\partial m}}{2b_2\sigma+b_1}\right)(1,\kappa)=-\frac{(92\kappa^4+1202\kappa^3+1458\kappa^2-367\kappa-537)\kappa}{66(3+4\kappa)(\kappa+1)(3+2\kappa)^2},$$
and it follows that
$$
\left(\frac{\partial F}{\partial m}\right)(1,\kappa)=\left(\frac{\partial q_2}{\partial m}\sigma^2+\frac{\partial q_1}{\partial m}\sigma+\frac{\partial q_0}{\partial m}+2q_2\sigma\frac{\partial \sigma}{\partial m}+q_1\frac{\partial \sigma}{\partial m}\right)(1,\kappa)=\frac{4(\kappa-1)(184\kappa^2-244\kappa-339)\kappa^2}{99(3+4\kappa)(\kappa+1)(2\kappa+3)}>0.
$$
Hence $F\geq 0$ on a neighborhood around $\{(1,\kappa)\mid \kappa\in(0,1)\}\subset [1,\infty)\times (0,1)$. In other words, for an $m$ that is slightly larger than $1$, the root $\sigma(m,\kappa)$ of $\tilde{B}_{m,\kappa}$ is strictly between the two real roots of $\tilde{Q}_{m,\kappa}$. Hence proving $F\geq 0$ on $[1,\infty)\times (0,1)$ is equivalent to showing that $\sigma(m,\kappa)$ \emph{stays} between the two real roots of $\tilde{Q}_{m,\kappa}$ for varying $(m,\kappa)$. This idea leads us to consider the resultant $r(\tilde{Q}_{m,\kappa},\tilde{B}_{m,\kappa})$ for the two polynomials. We have
\begin{equation}
\label{eqn:_resultant QB}
\begin{split}
r(\tilde{Q}_{m,\kappa},\tilde{B}_{m,\kappa})&=-\frac{64\kappa^2(m-1)(2\kappa m+3)}{(8m+3)^2(2m+1)^2m^2(1-\kappa)(2\kappa^2m(8m-5)+6\kappa(4m-1)+9)^2}\tilde{r},\\
\tilde{r}&=262144 \kappa ^4 m^{10}+(516096 \kappa ^4+679936 \kappa ^3) m^9+(373760 \kappa ^4+1233920 \kappa ^3+675840 \kappa ^2) m^8\\
&\quad +(-275904 \kappa ^4+1151040 \kappa ^3+1143744 \kappa ^2+308160 \kappa ) m^7\\
&\quad +(-926496 \kappa ^4+248832 \kappa ^3+1432512 \kappa ^2+507456 \kappa +54432) m^6\\
&\quad +(-800496 \kappa ^4-1256256 \kappa ^3+1472688 \kappa ^2+857520 \kappa +92016) m^5\\
&\quad +(-281880 \kappa ^4-1525392 \kappa ^3+266328 \kappa ^2+1353024 \kappa +199584) m^4\\
&\quad +(-33048 \kappa ^4-644436 \kappa ^3-672624 \kappa ^2+957420 \kappa +375192) m^3\\
&\quad +(-90396 \kappa ^3-433026 \kappa ^2 +186624 \kappa +333882) m^2\\
&\quad +(-74358 \kappa ^2-59049 \kappa +133407) m+19683(1-\kappa).
\end{split}
\end{equation}
As verified in Proposition \ref{prop: resultant is non-positive} in the Appendix, the inequality $r(\tilde{Q}_{m,\kappa},\tilde{B}_{m,\kappa})
\leq 0$ is valid for any $(m,\kappa)\in [1,\infty)\times (0,1)$. Furthermore, the function $r(\tilde{Q}_{m,\kappa},\tilde{B}_{m,\kappa})$ vanishes if and only if $m=1$. In particular, both $\tilde{Q}_{1,\kappa}$ and $\tilde{B}_{1,\kappa}$ have $\sigma(1,\kappa)=\frac{\kappa}{2\kappa+3}$ as their roots. For $m>1$, the polynomials $\tilde{Q}_{m,\kappa}$ and $\tilde{B}_{m,\kappa}$ do not share any common root. Hence $F\geq 0$ on $[1,\infty)\times (0,1)$ and the function $F$ vanishes if and only if $m=1$. Therefore, for $m\geq 2$, the inequality $\left.Q\right|_{\mathcal{L}_\kappa\cap \{P=0\}\cap \{B=0\}}\geq 0$ is valid and the equality is reached only at $p_0^+$ and a point $\Phi\cap \{B=0\}$. The proof is complete.
\end{proof}
\begin{figure}[h!]
\begin{subfigure}{.32\textwidth}
  \centering
  \includegraphics[clip,width=1\linewidth]{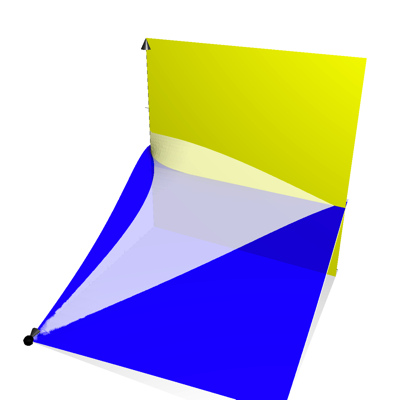}
\caption{$\mathcal{S}\cap\{P=0\}\cap\{Q\geq 0\}$}
\end{subfigure}
\begin{subfigure}{.32\textwidth}
  \centering
  \includegraphics[clip,width=1\linewidth]{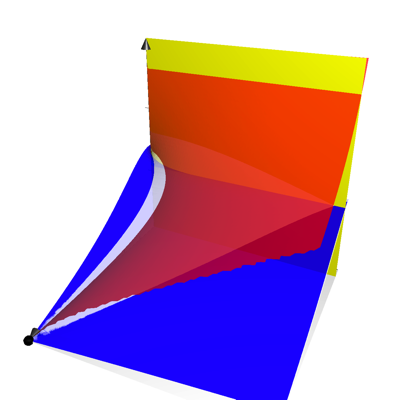}
 \caption{$\mathcal{S}\cap\{P=0\}\cap\{A\geq 0\}\cap\{Q\geq 0\}$}
\end{subfigure}
\begin{subfigure}{.32\textwidth}
  \centering
  \includegraphics[clip,width=1\linewidth]{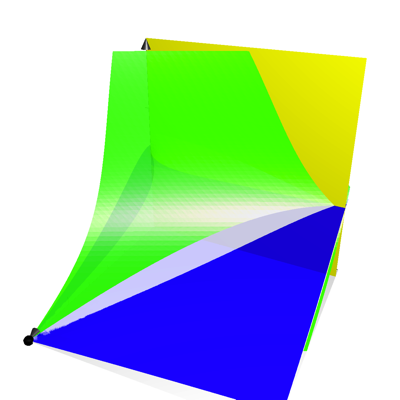}
 \caption{$\mathcal{S}\cap\{P=0\}\cap\{B\geq 0\}\cap\{Q\geq 0\}$}
\end{subfigure}
\caption{Figures above illustrate Proposition \ref{prop: Q is nonnegative} for $m=5$. Points in $\mathcal{S}\cap\{P=0\}$ (blue) are at the front ot $\{X_1-X_2=0\}$ (yellow), below $\{A=0\}$ (red) and behind $\{B=0\}$ (green). Proposition \ref{prop: Q is nonnegative} shows that $\mathcal{S}\cap\{P=0\}$ is below $\mathcal{S}\cap\{Q=0\}$ (white).}
\label{fig: Q}
\end{figure}

With the help of the proceeding proposition, we are ready to prove the following lemma.
\begin{lemma}
\label{lem: leave S only through X1=X2}
For $m\geq 2$, integral curves $\gamma_{s_1}$ that are in the interior of $\mathcal{S}$ can only escape through some point in $\mathcal{E}\cap\{H\geq 0\}\cap \{X_1-X_2=0\}$.
\end{lemma}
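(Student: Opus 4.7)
The strategy is to check, for each defining inequality of $\mathcal{S}$ other than $X_1 - X_2 \geq 0$, that the vector field $V$ points into $\mathcal{S}$ (or is tangent) on the corresponding boundary hypersurface. Then the only face through which an integral curve from the interior can exit is $\{X_1 - X_2 = 0\}$. Moreover, in $\mathcal{S}$ we have $X_1 \geq X_2 \geq 0$, so automatically $H = 3X_1 + 4m X_2 \geq 0$; hence every point of $\partial \mathcal{S} \cap \{X_1 - X_2 = 0\}$ automatically lies in $\{H \geq 0\}$, and the condition $H \geq 0$ in the conclusion is free.

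Four boundary faces need treatment: $\{X_2 = 0\}$, $\{A = 0\}$, $\{B = 0\}$, and $\{P = 0\}$. On $\{X_2 = 0\} \cap \mathcal{S}$, Proposition \ref{prop: basic prop of S1}(a) reduces the set to the fixed point $p_0^+$ (trivially invariant) and the curve $\Gamma = \{X_1 = X_2 = 0\} \cap \mathcal{S}$. On $\Gamma$ the formula for $X_2'$ in \eqref{eqn:_new Positive Einstein system} collapses to $R_2 - \tfrac{1}{n}(1-H^2) = (4m+8)YZ - 6Z^2 - \tfrac{1}{n}$, which is non-negative after combining with the conservation law \eqref{eqn:_new positive conservation law2} and the bound $mY \geq Z$ already extracted from $B \geq 0$ in the proof of Proposition \ref{prop: basic prop of S1}. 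For $\{A = 0\}$ and $\{B = 0\}$, one computes $A'$ and $B'$ directly from the vector field and uses \eqref{eqn:_derivative of H}, \eqref{eqn:_new positive conservation law2}, and the remaining defining inequalities of $\mathcal{S}$ to reduce the expressions to manifestly non-negative polynomial combinations.

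The decisive face is $\{P = 0\}$. A direct (but lengthy) computation shows that
\begin{equation*}
\langle \nabla P, V\rangle = Q \pmod{P,\ \mathcal{C}_{\Lambda \geq 0}},
\end{equation*}
so that on $\mathcal{S} \cap \{P = 0\}$ the sign of $P'$ is the sign of $Q$. This identification is exactly the purpose for which $Q$ was introduced. Invoking Proposition \ref{prop: Q is nonnegative}, we obtain $Q \geq 0$ on $\mathcal{S} \cap \{P = 0\}$ for $m \geq 2$, so $P' \geq 0$ there and no trajectory can leave $\mathcal{S}$ through this face. Combining the four cases forces any escape of $\gamma_{s_1}$ from the interior of $\mathcal{S}$ to occur through $\mathcal{E} \cap \{X_1 - X_2 = 0\}$, which by the first remark lies in $\{H \geq 0\}$.

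The main obstacle is the $\{P=0\}$ face; the algebraic identity $P' \equiv Q$ modulo $P$ and $\mathcal{C}_{\Lambda \geq 0}$ is a substantial polynomial manipulation, and the sign of $Q$ on the full boundary slice is not elementary, which is why Proposition \ref{prop: Q is nonnegative} is established beforehand and absorbs the serious work. The other three boundary checks are routine calculus-style reductions that exploit the defining inequalities pairwise.
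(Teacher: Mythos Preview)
Your plan matches the paper's approach, but three points need correction. First, your handling of $\{X_2=0\}$ is both incorrect and unnecessary: by Proposition~\ref{prop: basic prop of S1}(a), $\mathcal{S}\cap\{X_2=0\}=\{p_0^+\}\cup\Gamma$ with $\Gamma\subset\{X_1=X_2=0\}\subset\{X_1-X_2=0\}$, so any contact with this face already lies on the permitted exit face and no further argument is required. Your claim that $X_2'=R_2-\tfrac1n\geq 0$ on $\Gamma$ is actually false---at $\bigl(0,0,\sqrt{(n-1)/(6n)},0\bigr)\in\Gamma$ one has $R_2=0$ and $X_2'=-\tfrac1n<0$. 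Second, the identity you state for the $P$-face is off by a factor: the actual computation (carried out in the paper's appendix) yields
\[
\langle\nabla P,V\rangle \;=\; P\!\left(H\bigl(3G+\tfrac{3}{n}(1-H^2)-1\bigr)+\tfrac{4m}{3}X_2\right)+(X_1-X_2)\,Q,
\]
so on $\{P=0\}$ one has $P'=(X_1-X_2)Q$, not $Q$. Your conclusion $P'\geq 0$ survives because $X_1-X_2\geq 0$ in $\mathcal{S}$, but the identity as you wrote it is wrong.

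Third, and more substantively, you omit the non-transversal escape argument. The inequalities $A',B',P'\geq 0$ on the respective faces are only weak, and one must exclude a trajectory touching a face tangentially and then exiting. The paper closes this by locating the zero sets: $B'=0$ on $\{B=0\}$ forces $YZX_2=0$ (harmless via Proposition~\ref{prop: basic prop of S1}); $A'=0$ on $\{A=0\}$ forces $P=0$; and $P'=(X_1-X_2)Q=0$ on $\{P=0\}$ with $X_1\neq X_2$ forces $Q=0$, which by the equality case of Proposition~\ref{prop: Q is nonnegative} (for $m\geq 2$) confines the point to $p_0^+$, a point on the invariant curve $\Phi$, or $\Gamma$---none of which a $\gamma_{s_1}$ coming from the interior of $\mathcal{S}$ can meet away from $\{X_1-X_2=0\}$. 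Without this step the weak inward-pointing inequalities do not by themselves rule out escape.
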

\begin{proof}
The boundary $\partial\mathcal{S}$ is a union of the following five sets:
$$
\mathcal{S}\cap\{X_1-X_2=0\},\quad \mathcal{S}\cap\{X_2=0\},\quad \mathcal{S}\cap \{A=0\},\quad \mathcal{S}\cap \{B=0\},\quad \mathcal{S}\cap \{P=0\}.
$$
By Proposition \ref{prop: basic prop of S1}, if a $\gamma_{s_1}$ escapes $\mathcal{S}$ through some point with vanished $X_2$-coordinate, the point must lie in $\mathcal{S}\cap\{A=0\}\cap \{P=0\}$. Hence we aim to show that $V$ points inward when restricted to the last three parts of $\partial\mathcal{S}$. 

A straightforward computation shows that 
\begin{equation}
\label{eqn:_deri fof A}
\begin{split}
&\left.\left\langle\nabla A,V\right\rangle\right|_{A=0}\\
&=A\left(2H\left(G+\frac{1}{n}(1-H^2)\right)-2X_1-(4m+2)X_2\right) +\frac{A}{X_1+\frac{2m}{3}X_2}\left(R_1+\frac{2m}{3}R_2-\left(1+\frac{2m}{3}\right)\frac{1}{n}(1-H^2)\right)\\
&\quad +\frac{Y}{X_1+\frac{2m}{3}X_2}P\\
&=\frac{Y}{X_1+\frac{2m}{3}X_2}P\quad \text{since $A=0$}\\
&\geq 0.
\end{split}
\end{equation}
It is confirmed that $\left.V\right|_{A=0}$ points inward $\mathcal{S}$.

Since
\begin{equation}
\label{eqn:_deri fof B}
\begin{split}
&\left.\left\langle\nabla B,V\right\rangle\right|_{B=0}\\
&=-2\frac{H}{n}(H^2-1)\left(G+\frac{1}{n}(1-H^2)\right)-\frac{2n^2(2m+3)(m-1)}{m(16m^2+14m+3)}YZ \left(2H\left(G+\frac{1}{n}(1-H^2)\right)-2X_2\right)\\
&\quad \text{by \eqref{eqn:_derivative of H}}\\
&=2BH\left(G+\frac{1}{n}(1-H^2)\right)+\frac{4n^2(2m+3)(m-1)}{m(16m^2+14m+3)}YZ X_2\\
&=\frac{4n^2(2m+3)(m-1)}{m(16m^2+14m+3)}YZ X_2\quad \text{since $B=0$}\\
&\geq 0,
\end{split}
\end{equation}
it is clear that $\left.V\right|_{B=0}$ points inward $\mathcal{S}$.

Since
\begin{equation}
\label{eqn:_deri fof P}
\begin{split}
&\left.\langle\nabla P,V\rangle\right|_{P=0}\\
&=P\left(H\left(3G+\frac{3}{n} (1-H^2)-1\right)+\frac{4m}{3}X_2\right)+(X_1-X_2)Q\\
&=(X_1-X_2)Q \quad \text{since $P=0$}\\
&\geq 0 \quad \text{by Proposition \ref{prop: Q is nonnegative}},
\end{split}
\end{equation}
we learn that $\left.V\right|_{P=0}$ also points inward $\mathcal{S}$.

Finally, we need to exclude the possibility of non-transversal passing of a $\gamma_{s_1}$ through some point with $X_1\neq X_2$. By \eqref{eqn:_deri fof B} and Proposition \ref{prop: basic prop of S1}, such a point does not exist on $\mathcal{S}\cap \{B=0\}$. Suppose there were such a point on $\mathcal{S}\cap \{P=0\}$, then $P=Q=0$ at that point by \eqref{eqn:_deri fof P}. Then by Proposition \ref{prop: Q is nonnegative}, we know that such a point is either $p_0^+$ or a point on $\Phi$, which is impossible. Suppose the non-transversal passing point exists on $\mathcal{S}\cap \{A=0\}$, then by \eqref{eqn:_deri fof A} and Proposition \ref{prop: basic prop of S1}, we must have $A=P=0$ at that point, which is also impossible.
\end{proof}

To show that some $\gamma_{s_1}$ is initially in the set $\mathcal{S}$ we need the following technical proposition.
\begin{proposition}
\label{prop: Q is positive initially}
Define $\check{A}:=YX_2-\frac{m+2}{m}Z\left(X_1+\frac{2m}{3}X_2\right)$.
For $m\geq 2$, the function $Q$ is positive on the set
$$
\check{\mathcal{S}}=\mathcal{E}\cap\{X_1-X_2> 0\}\cap \{X_2> 0\}\cap\{A> 0\}\cap \{\check{A}< 0\}\cap \{P< 0\}.
$$
\end{proposition}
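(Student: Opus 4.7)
The argument parallels that of Proposition \ref{prop: Q is nonnegative}, with two adjustments reflecting that $P < 0$ (rather than $P = 0$) and that $\check{A}$, with its specific coefficient $\frac{m+2}{m}$, replaces $B$ as the lower bound on $Z/Y$. I would parametrize $\mathcal{E}$ by $(\kappa, x, X_1)$ with $\kappa = X_2/X_1 \in (0,1)$ and $x = Z/Y$, using the conservation law \eqref{eqn:_new positive conservation law2} to solve $Y^2$ in terms of these variables. In these coordinates, the inequalities $A > 0$ and $\check{A} < 0$ locate $x$ in the open strip
\begin{equation*}
x \in \left(\tfrac{m\kappa}{m+2+2m\kappa},\ \tfrac{m\kappa}{3+2m\kappa}\right) =: (x_{\check{A}}, x_A),
\end{equation*}
which is non-empty precisely when $m \geq 2$, since $x_A - x_{\check{A}} = \frac{m\kappa(m-1)}{(3+2m\kappa)(m+2+2m\kappa)}$.

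\emph{Step 1 (positivity of $\tilde{Q}_{m,\kappa}$ on the strip).} Retaining the notation of Proposition \ref{prop: Q is nonnegative}, the restriction of $Q$ on the slice $\kappa$ yields the same quadratic $\tilde{Q}_{m,\kappa}(x) = q_2 x^2 + q_1 x + q_0$, whose leading coefficient $q_2 < 0$ on $[1,\infty) \times (0,1)$ by \eqref{eqn:_coefficient q2}. By concavity, it suffices to verify $\tilde{Q}_{m,\kappa} > 0$ at the two endpoints of the closed strip $[x_{\check{A}}, x_A]$. Strict positivity at $x = x_A$ for $m \geq 2$ is exactly \eqref{eqn:_Q restricted at A=0}. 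At $x = x_{\check{A}}$ I would substitute the rational expression into $\tilde{Q}_{m,\kappa}$, clear denominators, and check that the resulting polynomial in $(m, \kappa)$ is strictly positive on $[2, \infty) \times (0,1)$, following the resultant-style argument of \eqref{eqn:_resultant QB} and using the $m = 1$ degeneration (where $x_A = x_{\check{A}}$ so $\check{\mathcal{S}}$ collapses) as a boundary anchor.

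\emph{Step 2 (correcting for $P \neq 0$).} Using only the conservation law \eqref{eqn:_new positive conservation law 1} to homogenize the degree-one piece $\tfrac{1}{n}(2X_1 + (4m+2)X_2)$ of $Q$ to degree three in $(X_1, X_2, Y, Z)$, I write, after substituting $X_2 = \kappa X_1$ and $Z = x Y$,
\begin{equation*}
\frac{Q}{X_1} = Y^2\, \tilde{Q}_{m,\kappa}(x) + E_1(m, \kappa, x)\cdot X_1^2 + E_2(m, \kappa, x) \cdot P,
\end{equation*}
for explicit rational functions $E_1, E_2$ determined by the two-equation homogenization of Proposition \ref{prop: Q is nonnegative} with $P = 0$ no longer imposed. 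A direct sign check on the strip shows $E_1 \geq 0$ and $E_2 \leq 0$. Since $P < 0$ in $\check{\mathcal{S}}$, both correction terms are non-negative, and combining with Step 1 yields $Q > 0$ on $\check{\mathcal{S}}$.

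\emph{Main obstacle.} The principal difficulty is Step 1 at the endpoint $x_{\check{A}}$: the coefficient $\frac{m+2}{m}$ in the definition of $\check{A}$ is engineered so that $\tilde{Q}_{m, \kappa}(x_{\check{A}})$ is strictly positive for $m \geq 2$ and vanishes precisely at $m = 1$ (where the strip collapses), but this borderline behavior means the resulting polynomial inequality in $(m, \kappa)$ must be verified by a careful computation analogous to Proposition \ref{prop: resultant is non-positive}. The sign analysis of $E_1$ and $E_2$ in Step 2 is laborious but mechanical, reducing in each case to checking that certain quadratic polynomials in $x$ with coefficients rational in $(m, \kappa)$ have the right sign on the relevant strip.
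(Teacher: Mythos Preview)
Your approach is essentially the paper's: slice by $\kappa=X_2/X_1$, reduce to showing $\tilde Q_{m,\kappa}>0$ on the $x=Z/Y$ interval determined by $A>0$ and $\check A<0$, and handle the discrepancy from $P<0$ by tracking the sign of the $X_1^2$ coefficient. Your Step~2 decomposition is exactly what the paper does, phrased slightly differently: after homogenizing $Q$ via the conservation law alone one gets $Q=c_3X_1^3+X_1\cdot(\text{quadratic in }Y,Z)$ with $c_3<0$, and the inequality $P<0$ combined with the conservation law gives an \emph{upper} bound on $X_1^2$, so substituting yields $Q>X_1Y^2\tilde Q(Z/Y)$ directly---in your language this amounts to $E_1=0$ and $E_2<0$, so no separate ``sign check on the strip'' is needed beyond the two scalar coefficient signs.

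Two corrections. First, your endpoint $x_{\check A}$ is miscomputed: from $\check A=0$ with $X_2=\kappa X_1$ one gets $Y\kappa=\tfrac{m+2}{m}\cdot\tfrac{3+2m\kappa}{3}\,Z$, hence
\[
x_{\check A}=\frac{3m\kappa}{(m+2)(3+2m\kappa)},
\]
not $\tfrac{m\kappa}{m+2+2m\kappa}$ (you appear to have distributed the coefficient $\tfrac{m+2}{m}$ only onto $X_1$). Second, the evaluation $\tilde Q_{m,\kappa}(x_{\check A})$ does not require a resultant argument: a direct substitution produces an expression with an explicit factor $(m-1)$ times a polynomial in $(m,\kappa)$ that is manifestly positive for $m\ge 2$, $\kappa\in(0,1)$, so the check is considerably simpler than you anticipate.
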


\begin{proof}
We consider $\check{\mathcal{S}}$ as a union of slices
$$
\check{\mathcal{S}}=\bigcup_{\kappa\in (0,1)}\check{\mathcal{L}}_\kappa, \quad \check{\mathcal{L}}_\kappa:=\check{\mathcal{S}}\cap \{X_2-\kappa X_1=0\}.
$$
For each $\check{\mathcal{L}}_\kappa$ with $\kappa\in(0,1)$, replace $X_2$ with $\kappa X_1$. Then from \eqref{eqn:_new positive conservation law2} and $P< 0$ we have
\begin{equation}
\label{eqn:_cancel X1 tilde S}
\begin{split}
0&> \left(\frac{1-\kappa}{3n}(32\kappa^2m^2-12\kappa^2m+48m\kappa-18\kappa+27)-\frac{12m}{n(n-1)}(1-\kappa)^3\right)X_1^2\\
&\quad -\left(2\kappa+\frac{6(1-\kappa)}{n-1}\right)Y^2+(4m+8)\left(1-4m\frac{1-\kappa}{n-1}\right)YZ+\left(\frac{1-\kappa}{n-1}12m-(4m\kappa+6)\right)Z^2\\
&= \frac{(1-\kappa)(16\kappa^2m^2-10\kappa^2 m+24\kappa m-6\kappa+9)}{6m+3}X_1^2\\
&\quad -\left(2\kappa+\frac{6(1-\kappa)}{n-1}\right)Y^2+(4m+8)\left(1-4m\frac{1-\kappa}{n-1}\right)YZ+\left(\frac{1-\kappa}{n-1}12m-(4m\kappa+6)\right)Z^2.
\end{split}
\end{equation}
The coefficient for $X_1^2$ in \eqref{eqn:_cancel X1 tilde S} is obviously positive for any $(m,\kappa)\in [2,\infty)\times (0,1)$.
On the other hand, use \eqref{eqn:_new positive conservation law2} to replace the constant term in $Q$ with homogeneous polynomials in $X_1$, $Y$, and $Z$. The polynomial $Q$ restricted on $\check{\mathcal{L}}_\kappa$ becomes 
\begin{equation}
\label{eqn:_homgenized Q tilde S}
\begin{split}
\left.Q\right|_{\check{\mathcal{L}}_\kappa}& =\frac{-64\kappa^3 m^3+(40\kappa-96)\kappa^2 m^2+(36\kappa^2+60\kappa-108)\kappa m+54(\kappa-1)}{18m+9}X_1^3\\
&\quad +X_1\left(\left(2\kappa+\frac{6}{2m+1}\right)Y^2-\frac{8m+16}{2m+1}YZ-\left(12m\kappa + \frac{12 m}{2m+1}\right)Z^2\right).
\end{split}
\end{equation}
It is obvious that the coefficient for $X_1^3$ in \eqref{eqn:_homgenized Q tilde S} is negative for any $(m,\kappa)\in [2,\infty)\times (0,1)$. Note that if \eqref{eqn:_cancel X1 tilde S} reaches equality, then one can write $X_1^2$ as a homogeneous polynomial in $Y$ and $Z$. Moreover, substituting the $X_1^2$ by the homogeneous polynomial in \eqref{eqn:_homgenized Q tilde S} gives the formula for $X_1Y^2\tilde{Q}\left(\frac{Z}{Y}\right)$ as in \eqref{eqn:_tilde QAB}. Therefore, from \eqref{eqn:_cancel X1 tilde S} we obtain
$$
\left.Q\right|_{\check{\mathcal{L}}_\kappa} > X_1Y^2\tilde{Q}\left(\frac{Z}{Y}\right).
$$
Note that $\tilde{Q}\left(\frac{Z}{Y}\right)$ above is defined on $\check{\mathcal{S}}$ instead of $\mathcal{S}\cap \{P=0\}$. On the other hand, the inequalities $A> 0$ and $\check{A}< 0$ become
$$
\frac{3m\kappa}{3(m+2)+2m(m+2)\kappa}<\frac{Z}{Y}<\frac{m\kappa}{3+2m\kappa}.
$$
As shown in \eqref{eqn:_coefficient q2}, it is clear that the coefficient $q_2$ is negative. It suffices to show that $\tilde{Q}$ is positive at $\frac{m\kappa}{3+2m\kappa}$ and $\frac{3m\kappa}{3(m+2)+2m(m+2)\kappa}$ to prove that the polynomial is positive on the open interval in between.
From \eqref{eqn:_Q restricted at A=0} it is clear that $\tilde{Q}\left(\frac{m\kappa}{3+2m\kappa}\right)>0$. For any $(m,\kappa)\in [2,\infty)\times (0,1)$, a straightforward computation shows that
\begin{equation}
\begin{split}
&\tilde{Q}\left(\frac{3m\kappa}{3(m+2)+2m(m+2)\kappa}\right)\\
&=\frac{512 (m-1)\kappa^2}{3(1 - \kappa) (2 \kappa m + 3)(2\kappa^2 m(8m-5)+6\kappa(4m-1)+9)(m+2)^2}\left( \left(m^3 + \frac{11}{8} m^2 - \frac{19}{4} m - \frac{3}{8}\right) m^2\kappa^3\right.\\
&\quad  \left. + \frac{3}{2}\left(m^3 - \frac{5}{8}m^2 +3 m -\frac{3}{4} \right)m \kappa^2 + \left(\frac{9}{8}m^3 - \frac{99}{64} m^2 + \frac{27}{16}m - \frac{27}{32}\right) \kappa + \left(\frac{27}{64} m^2 + \frac{27}{32}\right)\right)\\
&>0.
\end{split}
\end{equation}
 The proof is complete.
\end{proof}

We are ready to prove the following lemma.
\begin{lemma}
\label{lem: initially in S}
For $m\geq 2$, the integral curve $\gamma_{s_1}$ is initially inside the interior of $\mathcal{S}$ if $s_1\in \left(\frac{3}{m-1},\frac{9(5m+3)(4m^2+4m+3)}{n^2(2m+3)(m-1)}\right)$.
\end{lemma}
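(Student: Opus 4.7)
The plan is to substitute the linearized expansion \eqref{eqn:_linearized solution near p_0} of $\gamma_{s_1}$ into each of the five defining inequalities of $\mathcal{S}$ and verify that all are strictly positive for $s_1$ in the stated interval. Writing $\varepsilon := e^{2\eta/3}$, I have $(X_1, X_2, Y, Z) = p_0^+ + \varepsilon(v_1 + s_1 v_2) + O(\varepsilon^{1+\epsilon'})$; since four of the five defining functions vanish at $p_0^+$, the sign near $p_0^+$ is determined by the coefficient of $\varepsilon$.

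First I would dispatch the trivial inequalities. At $p_0^+$ one has $X_1 - X_2 = 1/3 > 0$, so this inequality holds automatically for $\eta$ sufficiently negative. The coefficient of $\varepsilon$ in $X_2$ is $3[(m+2)s_1 - 3]$, positive exactly when $s_1 > 3/(m+2)$; since $3/(m-1) > 3/(m+2)$ for $m \geq 2$, this is automatic from the stated lower bound. For $A = Y X_2 - \frac{3}{m} Z(X_1 + \frac{2m}{3}X_2)$, direct substitution yields the leading coefficient $\frac{(m+3)[(m-1)s_1 - 3]}{m}$ via the factorization $m^2 + 2m - 3 = (m+3)(m-1)$, which changes sign precisely at $s_1 = 3/(m-1)$, recovering the claimed lower bound.

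Next I would verify that $P$ imposes no upper bound. The crucial observation is that $3(v_2)_1 + 4m(v_2)_2 = -12m(m+2) + 12m(m+2) = 0$, so the $s_1$-contribution to $H$ cancels and one obtains the $s_1$-independent expansion $1 - H^2 = 12(4m+3)(m+3)\varepsilon + O(\varepsilon^2)$. Combining this with the cancellation $(4m+8)(3+s_1) - 4(m+2)s_1 = 12(m+2)$ (which kills the remaining $s_1$-dependence in the leading term of $P$) and the identity $n - (4m+3) = 1$, the coefficient of $\varepsilon$ in $P$ collapses to $\frac{4(m+3)}{n}$, strictly positive for every $s_1$.

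The binding upper bound therefore comes from $B > 0$. Using $YZ = (3+s_1)\varepsilon + O(\varepsilon^2)$ and $(1-H^2)/n = \frac{12(4m+3)(m+3)}{n}\varepsilon + O(\varepsilon^2)$, the coefficient of $\varepsilon$ in $B$ is linear in $s_1$, and positivity of $B$ at leading order is equivalent to an upper bound $s_1 < s_1^{\star}$, where $s_1^{\star}$ solves
\[
\frac{12(4m+3)(m+3)}{n} = \frac{2n^2(2m+3)(m-1)(3+s_1^{\star})}{m(2m+1)(8m+3)}.
\]
The main obstacle is to verify by polynomial manipulation, after substituting $n = 4(m+1)$, that $s_1^{\star}$ coincides with $\frac{9(5m+3)(4m^2+4m+3)}{n^2(2m+3)(m-1)}$ as claimed. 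Granting this identity, all five defining inequalities of $\mathcal{S}$ hold strictly along $\gamma_{s_1}$ for $\eta$ sufficiently negative exactly when $s_1$ lies in the stated open interval, so $\gamma_{s_1}$ is initially inside the interior of $\mathcal{S}$.
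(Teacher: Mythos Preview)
Your argument has a genuine gap in the treatment of $P$, caused by an incorrect value of $n$. The principal orbit is $\mathbb{S}^{4m+3}$, so $n=4m+3$, not $4(m+1)$; in particular the ``identity $n-(4m+3)=1$'' you invoke is false. Redoing your computation with the correct $n$, the first-order coefficient of $P$ along $\gamma_{s_1}$ is
\[
\frac{4m+8}{9}a_4-\frac{4(4m+3)(m+3)}{n}-\frac{4}{9}a_2
=4(m+3)\Bigl(1-\frac{4m+3}{n}\Bigr)=0,
\]
where $(a_2,a_4)$ are the second and fourth entries of $v_1+s_1v_2$. Thus $P(\gamma_{s_1})=O(e^{(\frac{2}{3}+\epsilon)\eta})$ and the linearized expansion alone does \emph{not} determine the sign of $P$ near $p_0^+$. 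Your ``strictly positive'' coefficient $\frac{4(m+3)}{n}$ is an artifact of the wrong $n$.

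The paper handles this vanishing by a second step you are missing. It first shows that $\gamma_{s_1}$ (for $s_1$ in the stated interval) is initially in the auxiliary region $\check{\mathcal{S}}$ where, by Proposition~\ref{prop: Q is positive initially}, one has $Q>0$. Then, assuming for contradiction that $P<0$ initially, the relation
\[
P'=P\Bigl(H\bigl(3G+\tfrac{3}{n}(1-H^2)-1\bigr)+\tfrac{4m}{3}X_2\Bigr)+(X_1-X_2)Q
\]
forces $P'>0$ near $p_0^+$ (both summands are positive), contradicting $P\to 0$ from below. Your proof needs this additional ingredient; the other four inequalities ($X_1-X_2$, $X_2$, $A$, $B$) are handled correctly at first order, as you outline.
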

\begin{proof}
With the linearized solution \eqref{eqn:_linearized solution near p_0}, we have
\begin{equation}
\label{eqn:_all the linearized evaluation}
\begin{split}
X_1(\gamma_{s_1})&=\frac{1}{3}-(8m^2+18m+18+(4m+8)ms_1)e^{\frac{2}{3}\eta}+O\left(e^{\left(\frac{2}{3}+\epsilon\right)\eta}\right)\\
X_2(\gamma_{s_1})&=(-9+3s_1(m+2))e^{\frac{2}{3}\eta}+O\left(e^{\left(\frac{2}{3}+\epsilon\right)\eta}\right)\\
A(\gamma_{s_1})&= \frac{m+3}{m}((m-1)s_1-3)e^{\frac{2}{3}\eta}+O\left(e^{\left(\frac{2}{3}+\epsilon\right)\eta}\right)\\
B(\gamma_{s_1})&= \frac{2n^2(2m+3)(m-1)}{m(2m+1)(8m+3)}\left(\frac{9(5m+3)(4m^2+4m+3)}{n^2(2m+3)(m-1)}-s_1\right)e^{\frac{2}{3}\eta}+O\left(e^{\left(\frac{2}{3}+\epsilon\right)\eta}\right)\\
P(\gamma_{s_1})&= O\left(e^{\left(\frac{2}{3}+\epsilon\right)\eta}\right)\\
Q(\gamma_{s_1})&= O\left(e^{\left(\frac{2}{3}+\epsilon\right)\eta}\right)
\end{split}
\end{equation}
near $p_0^+$. Hence functions $X_2$, $A$ and $B$ are positive along $\gamma_{s_1}$ near $p_0^+$ if $s_1\in \left(\frac{3}{m-1},\frac{9(5m+3)(4m^2+4m+3)}{n^2(2m+3)(m-1)}\right)$. Furthermore, the last two equalities in \eqref{eqn:_all the linearized evaluation} show that $\nabla P$ and $\nabla Q$ are perpendicular to the linearized solution \eqref{eqn:_linearized solution near p_0} at $p_0$. Hence the integral curve $\gamma_{s_1}$ is tangent to $\{P=0\}$ and $\{Q=0\}$ for any $s_1$. It takes a little bit more work to show that the function $P$ is initially positive along $\gamma_{s_1}$. Recall in \eqref{eqn:_deri fof P}, we have 
\begin{equation}
\label{eqn:_P'}
P'=P\left(H\left(3G+\frac{3}{n}(1-H^2)-1\right)+\frac{4m}{3}X_2\right)+(X_1-X_2)Q.
\end{equation}
Note that $$\left(H\left(3G+\frac{3}{n}(1-H^2)-1\right)+\frac{4m}{3}X_2\right)(\gamma_{s_1})= -(48m^2+84m+20m(m+2)s_1)e^{\frac{2}{3}\eta}+O\left(e^{\left(\frac{2}{3}+\epsilon\right)\eta}\right)$$
near $p_0^+$.
If $P$ were negative initially along $\gamma_{s_1}$ near $p_0^+$, the first term in \eqref{eqn:_P'} is positive. Moreover, from the linearized solution \eqref{eqn:_linearized solution near p_0}, we have
$$
\check{A}(\gamma_{s_1})=-\frac{1}{m}\left(6m+6-(m+2)(m-1)s_1\right)e^{\frac{2}{3}\eta}+O\left(e^{\left(\frac{2}{3}+\epsilon\right)\eta}\right).
$$
Since $\frac{9(5m+3)(4m^2+4m+3)}{n^2(2m+3)(m-1)}< \frac{6(m+1)}{(m+2)(m-1)}$ for $m\geq 2$, we know that $\check{A}(\gamma_{s_1})$ is initially negative along $\gamma_{s_1}$. Based on the assumption that $P$ is initially negative along $\gamma_{s_1}$, we know that $\gamma_{s_1}$ is initially in $\check{\mathcal{S}}$. By Proposition \ref{prop: Q is positive initially}, we know that the second term in \eqref{eqn:_P'} is also positive and so is $P'$, which is a contradiction. Therefore, the function $P$ must be positive initially along $\gamma_{s_1}$ near $p_0^+$.
\end{proof}

According to Section 4 in \cite{bohm_inhomogeneous_1998}, the existence of the heterocline that joins $p_0^\pm$ relies on the number of critical points of $\sqrt{\frac{Z}{Y}}$ that appear before the turning point. The number is originally denoted by $\sharp C_w(\bar{h})$ in \cite{bohm_inhomogeneous_1998}, where $w$ is the ratio $\frac{f_1}{f_2}=\sqrt{\frac{Z}{Y}}$ and $\bar{h}$ corresponds to the initial data $f$ in \eqref{eqn:_initial condition 1}. We introduce the following modified definitions of $\sharp C_w(\bar{h})$ and $W$-intersection points.
\begin{definition}
\label{def: counting}
For a $\gamma_{s_1}$ that is not a heterocline, i.e., a $\gamma_{s_1}$ that is not defined on $\mathbb{R}$ or $\lim\limits_{\eta\to \infty}\gamma_{s_1}\neq p_0^-$, let $\sharp C(\gamma_{s_1})$ be the number of critical points of the function $\sqrt{\frac{Z}{Y}}$ along $\gamma_{s_1}$ that appear in $\mathcal{E}\cap\{H>0\}\cap\{Y-Z> 0\}$.
\end{definition}

\begin{definition}
\label{def: W-intersection}
The point where $\gamma_{s_1}$ or $\zeta_{s_2}$ intersects $\mathcal{E}\cap\{H > 0\}\cap\{Y-Z=0\}$ is called a $W$-intersection point.
\end{definition}

We have the following proposition.
\begin{proposition}
\label{prop:turning or W-intersection}
Any $\gamma_{s_1}$ with $s_1>-3$ (or $\zeta_{s_2}$ with $s_2>0$) has a turning point at $\mathcal{E}\cap\{H=0\}$ or a $W$-intersection point at $\mathcal{E}\cap \{Y-Z=0\}$.
\end{proposition}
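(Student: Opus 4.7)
The plan is to argue by contradiction. Suppose $\gamma_{s_1}$ with $s_1>-3$ has neither a turning point nor a $W$-intersection point. By definition, this means the curve remains in $\mathcal{E}\cap\{H>0\}\cap\{Y-Z>0\}$ throughout its maximal interval of existence $(-\infty,\bar{\eta})$. First I would check that this open region is in fact entered near $p_0^+$: reading off \eqref{eqn:_linearized solution near p_0}, the $Z$-component has leading term $3(s_1+3)e^{2\eta/3}$, which is positive precisely when $s_1>-3$, while $Y\to 1/3$ and $H\to 1$ from below. The analogous check for $\zeta_{s_2}$ with $s_2>0$ uses \eqref{eqn:_linearized positive einstein near P1+}: the leading term of $Y-Z$ is $s_2(4m+3)e^{2\eta/n}$, positive for $s_2>0$, and $H$ again approaches $1$ from below.

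Next I would establish the key compactness fact: the closed set $\mathcal{E}\cap\{H\geq 0\}\cap\{Y-Z\geq 0\}$ is compact. On this set one has $Y\geq Z\geq 0$, so the three summands $6Y^2$, $4m(4m+8)YZ-12mZ^2=4mZ\bigl((4m+8)Y-3Z\bigr)$ and $\tfrac{12m}{n}(X_1-X_2)^2$ appearing in the conservation law \eqref{eqn:_new positive conservation law2} are each non-negative and hence each bounded by $\tfrac{n-1}{n}$. This bounds $Y$, $Z$, and $|X_1-X_2|$. Combined with the bound $|3X_1+4mX_2|=|H|\leq 1$ that defines $\mathcal{E}$, this yields individual bounds on $X_1$ and $X_2$, so the set is bounded and hence compact.

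Since the trajectory is confined to this compact set under the contradiction hypothesis, its maximal interval of existence must be all of $\mathbb{R}$. From \eqref{eqn:_derivative of H},
\[
H' \;=\; (H^2-1)\Bigl(\tfrac{1}{n}+\tfrac{12m}{n}(X_1-X_2)^2\Bigr) \;\leq\; \tfrac{H^2-1}{n},
\]
so $H$ is strictly decreasing whenever $H^2<1$. Choose any finite $\eta_0$ past $p_0^+$ with $H_0:=H(\eta_0)<1$; then for every $\eta\geq\eta_0$ we have $H(\eta)\leq H_0<1$ and hence $H'(\eta)\leq \tfrac{H_0^2-1}{n}<0$. Integrating yields a linear upper bound $H(\eta)\leq H_0+\tfrac{H_0^2-1}{n}(\eta-\eta_0)$, which becomes non-positive in finite time and contradicts $H>0$. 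The identical argument, using $p_1^+$ in place of $p_0^+$ and \eqref{eqn:_linearized positive einstein near P1+} in place of \eqref{eqn:_linearized solution near p_0}, handles $\zeta_{s_2}$.

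The only nontrivial ingredient is the compactness of $\mathcal{E}\cap\{H\geq 0\}\cap\{Y-Z\geq 0\}$, where one must verify the sign of $4m(4m+8)YZ-12mZ^2$ and then use the $H^2\leq 1$ constraint from $\mathcal{E}$ to separate $X_1$ from $X_2$; once that is in hand, the remainder is a one-line Grönwall-style consequence of the sign of $H'$ on $\{H^2<1\}$.
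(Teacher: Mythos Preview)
Your proof is correct and follows essentially the same approach as the paper's: argue by contradiction, confine the trajectory to the compact set $\mathcal{E}\cap\{H\geq 0\}\cap\{Y-Z\geq 0\}$ so that it is defined on all of $\mathbb{R}$, and then use the sign of $H'$ on $\{H^2<1\}$ to force $H$ to reach $0$ in finite time. Your treatment is in fact more explicit than the paper's on two points---you spell out the compactness argument from \eqref{eqn:_new positive conservation law2} and you verify from the linearized solutions that the curves actually enter the relevant open region---whereas the paper simply asserts compactness and the initial containment.
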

\begin{proof}
An integral curve $\gamma_{s_1}$ with $s_1>-3$ (or $\zeta_{s_2}$ with $s_2>0$) is initially in the interior of the compact set $\mathcal{E}\cap\{H\geq 0\}\cap \{Y-Z\geq 0\}$. Suppose the integral curve does not have any turning point or any $W$-intersection point. Then it must be defined on $\mathbb{R}$. Furthermore, such an integral curve is in $\mathcal{E}\cap \{H<1\}$ initially. From \eqref{eqn:_derivative of H}, along the integral curve we eventually have $H^2<1-\epsilon$ for some $\epsilon>0$. Hence 
$$
H'=(H^2-1)\left(\frac{1}{n}+\frac{12m}{n}(X_1-X_2)^2\right)\leq \frac{H^2-1}{n}<-\frac{\epsilon}{n}
$$
eventually, meaning the function $H$ must at vanish some point. We reach a contradiction. Each of the integral curve without a turning point must have a $W$-intersection point.
\end{proof}
We rephrase Lemma 4.4 in \cite{bohm_inhomogeneous_1998} with these new definitions.
\begin{theorem}
\label{thm: Bohm local constant}
If $\gamma_{s_1}$ is not a heterocline for any $s_1\in [a,b]$, then $\sharp C(\gamma_{s_1})$ is a constant for all $s_1\in [a,b]$.
\end{theorem}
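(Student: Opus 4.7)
The plan is to adapt the local-constancy argument of Böhm's Lemma 4.4 in \cite{bohm_inhomogeneous_1998} to Definitions \ref{def: counting} and \ref{def: W-intersection}. The first step is to translate the problem into counting zeros of a smooth function on a parametrized family of trajectories. A direct computation from \eqref{eqn:_new Positive Einstein system} gives
\[
\left(\sqrt{Z/Y}\right)' = \sqrt{Z/Y}\,(X_1 - X_2),
\]
so on $\mathcal{E} \cap \{Y, Z > 0\}$ the critical points of $\sqrt{Z/Y}$ along $\gamma_{s_1}$ coincide with the zeros of $f(s_1, \eta) := (X_1 - X_2)(\gamma_{s_1}(\eta))$. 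From the linearization \eqref{eqn:_linearized solution near p_0}, $f(s_1, \eta) \to 1/3 > 0$ uniformly on any compact sub-interval of $[a, b]$ as $\eta \to -\infty$, which forbids the birth of a critical point at the left end.

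The second step is to control the right end, i.e., the exit of $\gamma_{s_1}$ from the open counting region $\mathcal{R} := \mathcal{E} \cap \{H > 0\} \cap \{Y - Z > 0\}$. By Proposition \ref{prop:turning or W-intersection} the exit time $\eta_*(s_1)$ is finite for every non-heteroclinic $\gamma_{s_1}$, and the exit is transversal in both possible modes: \eqref{eqn:_derivative of H} shows $H' < 0$ whenever $H < 1$, while on $\{Y = Z, H > 0\}$ one computes $(Y - Z)' = -2Y(X_1 - X_2)$, which is nonzero unless $X_1 = X_2$ occurs simultaneously with the $W$-intersection. Continuous dependence of ODE solutions on $s_1$ then yields continuity of both $\gamma_{s_1}$ and $\eta_*(s_1)$ away from such degeneracies, so $\sharp C(\gamma_{s_1})$ equals the number of zeros of $f(s_1, \cdot)$ on $(-\infty, \eta_*(s_1))$ and varies continuously with $s_1$.

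The third and decisive step is to exclude degenerations. As $s_1$ varies, $\sharp C(\gamma_{s_1})$ can only jump if (a) two simple zeros of $f$ coalesce into a tangential zero inside $\mathcal{R}$, or (b) a zero is absorbed by $\eta_*(s_1)$ at the boundary. At a tangential zero one has $X_1 = X_2$ together with
\[
(X_1 - X_2)' = R_1 - R_2 = 2(Y - Z)(Y - (2m+3)Z) = 0,
\]
forcing either $Y = Z$ (the boundary case, reducing (a) to (b)) or $Y = (2m+3)Z$ (the Jensen direction). In case (b) one needs $X_1 = X_2$ at the exit. In each of these exceptional configurations the trajectory $\gamma_{s_1^*}$ would intersect one of the invariant loci identified in Section \ref{sec: Linearization at critical points} — the set \eqref{eqn:_jensen based cone} containing $\Phi$, the set $\{Y = Z, X_1 = X_2\}$ containing $\zeta_0$, or the sets defining the quaternionic-Kähler heteroclines of Proposition \ref{prop: qk metric as integral curve} — and by uniqueness of solutions to \eqref{eqn:_new Positive Einstein system} together with the coordinates of the critical points of $V$, this would force $\gamma_{s_1^*}$ to be a heterocline, contradicting the hypothesis. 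The main obstacle is precisely this last exclusion, which requires matching each possible tangential or boundary degeneracy with an invariant subvariety and invoking ODE uniqueness, and this constitutes the core of Böhm's original argument that carries over verbatim to our modified count.
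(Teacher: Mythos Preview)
The paper does not prove this theorem; it is stated as a rephrasing of Lemma~4.4 in \cite{bohm_inhomogeneous_1998} and cited without argument. Your proposal is therefore not competing with a proof in the paper but with B\"ohm's original one, and the overall architecture you give---reduce $\sharp C$ to the count of zeros of $X_1-X_2$, control the left end via \eqref{eqn:_linearized solution near p_0}, and rule out tangential zeros and boundary absorption---is the right one.

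There is, however, a genuine gap in your case~(b) when the exit is a turning point rather than a $W$-intersection. If a zero of $X_1-X_2$ reaches $\{H=0\}$, then $H=3X_1+4mX_2=0$ together with $X_1=X_2$ forces $X_1=X_2=0$. Such a point lies on \emph{none} of the $1$-dimensional invariant loci you list: it is not on \eqref{eqn:_jensen based cone} unless $Y=(2m+3)Z$, not on \eqref{eqn:_standard sine cone} unless $Y=Z$ (excluded by $Y-Z>0$), and not on $\mathcal{B}_{QK}$ or $\bar{\mathcal{B}}_{QK}$ (which would force $Z=0$ or $Y=0$). The set $\mathcal{E}\cap\{X_1=X_2=0\}$ is not invariant, so uniqueness of solutions gives no contradiction here. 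The correct exclusion is the $\mathbb{Z}_2$-symmetry $(X_1,X_2,\eta)\mapsto(-X_1,-X_2,-\eta)$ of \eqref{eqn:_new Positive Einstein system}: a trajectory through a point with $X_1=X_2=0$ is fixed by this involution, so its forward orbit mirrors its backward orbit and it converges to $p_0^-$. Hence $\gamma_{s_1^*}$ would be a heterocline, and \emph{this} is where the hypothesis of the theorem is actually used. Your invariant-locus matching handles the interior tangential zero (forcing the trajectory onto $\Phi$) and the $W$-intersection degeneracy (forcing it onto $\zeta_0$) correctly---though note that in those two cases the contradiction is with $\gamma_{s_1}$ emanating from $p_0^+$, not with the non-heterocline hypothesis---but it misses the turning-point case entirely.
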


Immediately, we have the following proposition 
\begin{proposition}
\label{prop: positive X1-X2}
The quantities $\sharp C(\gamma_{0})$ and $\sharp C(\zeta_{\frac{1}{2m+6}})$ are both zero.
\end{proposition}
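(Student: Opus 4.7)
The plan is to reduce the counting to a sign check on $X_1 - X_2$ by combining the invariant-set characterizations of $\gamma_0$ and $\zeta_{\frac{1}{2m+6}}$ from Proposition \ref{prop: qk metric as integral curve} with the derivative identity \eqref{eqn:_derivative of Z-Y}. Wherever $Y, Z > 0$ along an integral curve of $V$, that identity reads
\[
\left\langle \nabla\!\left(\tfrac{Y}{Z}\right), V\right\rangle = -2\tfrac{Y}{Z}(X_1 - X_2),
\]
so a critical point of $\sqrt{Z/Y}$ (equivalently of $Y/Z$) along such a curve occurs exactly where $X_1 = X_2$.

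Next I would substitute the two sets of defining equations. On $\mathcal{B}_{QK}$ the relation $X_1 - X_2 + Z - Y = 0$ forces $X_1 - X_2 = Y - Z$, which is strictly positive throughout the counting region $\mathcal{E}\cap\{H > 0\}\cap\{Y - Z > 0\}$; hence $\sqrt{Z/Y}$ is strictly monotone along $\gamma_0$ in that region, giving $\sharp C(\gamma_0) = 0$. Analogously on $\bar{\mathcal{B}}_{QK}$ the relation $X_2 - X_1 + Z - Y = 0$ gives $X_1 - X_2 = Z - Y$, which is strictly negative on the counting region along $\zeta_{\frac{1}{2m+6}}$, so $\sharp C(\zeta_{\frac{1}{2m+6}}) = 0$.

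The only subtlety is to confirm that $Y, Z > 0$ on the open arcs of the two integral curves, so that $Y/Z$ is well-defined wherever critical points are counted. This is immediate: the coordinate loci $\{Y = 0\}$ and $\{Z = 0\}$ are invariant under $V$ by \eqref{eqn:_new Positive Einstein system}, while the eigenvector $v_1$ has positive $Z$-component at $p_0^+$, and $w_1 + \tfrac{1}{2m+6}w_2$ has positive $Y$-component at $p_1^+$ (with $Y, Z$ not forced to zero elsewhere along the arc until the terminal critical point). Hence both integral curves enter the interior $\{Y > 0,\ Z > 0\}$ and stay there. No substantive obstacle is expected; the entire argument is essentially a one-line substitution once Proposition \ref{prop: qk metric as integral curve} is in hand.
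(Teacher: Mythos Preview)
Your argument is correct and, for the stated proposition, cleaner than the paper's. Both proofs start from Proposition~\ref{prop: qk metric as integral curve}, which gives $X_1-X_2=Y-Z$ along $\gamma_0$ and $X_1-X_2=Z-Y$ along $\zeta_{\frac{1}{2m+6}}$. You then observe that the counting region in Definition~\ref{def: counting} is already cut out by $Y-Z>0$, so on that region $X_1-X_2$ has a definite sign by direct substitution, and $\sqrt{Z/Y}$ cannot have a critical point there. The paper instead proves the stronger fact that $Y-Z>0$ (equivalently $X_1-X_2>0$) along the \emph{entire} open arc of $\gamma_0$ in $\{H^2<1\}$, by combining $H^2<1$ with the conservation identity \eqref{eqn:_qk conservation} and checking a sign. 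That extra information is what the paper appeals to in the last sentence of the proposition and in later passages (e.g., in the proof of Lemma~\ref{lem: limiting angle determines the existence}), but the downstream logical use in each case is only $\sharp C=0$, for which your shortcut already suffices. One small cosmetic point: for $\zeta_{\frac{1}{2m+6}}$ you do not need to inspect the $Y$-component of $w_1+\tfrac{1}{2m+6}w_2$, since $Y=Z=\tfrac{1}{n}>0$ at $p_1^+$ and the coordinate hyperplanes $\{Y=0\}$, $\{Z=0\}$ are invariant; positivity of $Y$ and $Z$ on the open arc is immediate.
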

\begin{proof}
From defining equations of $\mathcal{B}_{QK}$, we have $X_1=Y-2Z$ and $X_2=-Z$ along $\gamma_{0}$. Then $H^2< 1$ becomes $(3Y-(n+3)Z)^2< 1$. From \eqref{eqn:_qk conservation} we have 
\begin{equation}
\begin{split}
&\frac{12m}{n}(Y-Z)^2+6Y^2+4m(4m+8)YZ-12mZ^2> \frac{n-1}{n}(3Y-(4m+6)Z)^2\\
\Leftrightarrow& n(n+9)(n-1)(Y-Z)Z> 0.
\end{split}
\end{equation}
Hence $X_1-X_2=Y-Z> 0$ along $\gamma_0$. Similarly, we have $X_2-X_1=Y-Z> 0$ along $\zeta_{\frac{1}{2m+6}}$. Note that $X_1-X_2$ vanishes at the critical point $p_1^-$ and it stays positive along $\gamma_0$.
\end{proof}

We are ready to prove Theorem \ref{thm: first Einstein metric}.
\begin{proof}[Proof of Theorem \ref{thm: first Einstein metric}]
Consider a $\gamma_{s_1}$ with $s_1\in \left(\frac{3}{m-1},\frac{9(5m+3)(4m^2+4m+3)}{n^2(2m+3)(m-1)}\right)$. From Lemma \ref{lem: initially in S} we know that such a $\gamma_{s_1}$ is initially in $\mathcal{S}$. From \eqref{eqn:_derivative of H} we know that such a $\gamma_{s_1}$ must exit $\mathcal{S}$. From Lemma \ref{lem: leave S only through X1=X2} we know that such a $\gamma_{s_1}$ exits $\mathcal{S}$ through the face $\mathcal{E} \cap\{H> 0\}\cap \{Y-Z\geq 0\}\cap \{X_1-X_2=0\}$. It is clear that $\left(\sqrt{\frac{Z}{Y}}\right)'=\sqrt{\frac{Z}{Y}}(X_1-X_2)$. Since $Z$ is initially positive along $\gamma_{s_1}$ with $s_1> \frac{3}{m-1}$, we know that $\sharp C(\gamma_{s_1})$ is exactly the number of times that $\gamma_{s_1}$ intersects $\mathcal{E} \cap\{H> 0\}\cap \{Y-Z\geq 0\}\cap \{X_1-X_2=0\}$. Hence $\sharp C(\gamma_{s_1})\geq 1$ for $s_1\in \left(\frac{3}{m-1},\frac{9(5m+3)(4m^2+4m+3)}{n^2(2m+3)(m-1)}\right)$.

On the other hand, the integral curve $\gamma_0$ joins $p_0^+$ and $p_1^-$. From Proposition \ref{prop: positive X1-X2}, the function $X_1-X_2$ stays positive along $\gamma_0$. Hence we have $\sharp C(\gamma_0)=0$. Therefore, by Theorem \ref{thm: Bohm local constant} there exists some $s_\star\in \left(0,\frac{9(5m+3)(4m^2+4m+3)}{n^2(2m+3)(m-1)}\right)$ such that $\gamma_{s_\star}$ is a heterocline that joins $p_0^\pm$. Theorem \ref{thm: first Einstein metric} is proved.
\end{proof}

\begin{remark}
With some small modifications, the polynomial $\mathcal{S}$ can be applied to prove the existence of positive Einstein metrics on  $F^{m+1}$, a cohomogeneity one space formed by the group triple $(Sp(m)U(1),Sp(m)Sp(1),Sp(m+1))$. Furthermore, some non-existence results can also be obtained from the defining polynomial $P$. For some cohomogeneity one spaces, the function $P$ is negative along all $\gamma_{s_1}$ in $\mathcal{E}\cap \{X_1-X_2\geq 0\}$, essentially forcing $X_1-X_2$ to be positive along these integral curves. For example, there is no $\spin(9)$-invariant cohomogeneity one positive Einstein metric on $\mathbb{OP}^2\sharp \overline{\mathbb{OP}}^2$. A systematic study of the existence problem on all cohomogeneity one spaces with two isotropy summands will be soon presented in later work.
\end{remark}

\begin{remark}
One can recover B\"ohm's metric on $\mathbb{HP}^2\sharp \overline{\mathbb{HP}}^2$ by enlarging the set $\mathcal{S}$ for $m=1$. Specifically, we can increase the coefficient for $Y$ in the polynomial $A$ properly so that: 1. Integral curves $\gamma_{s_1}$ with large enough $s_1$ are in the enlarged $\mathcal{S}$ initially. 2. Integral curves that are in the enlarged $\mathcal{S}$ must exit through the face $\mathcal{S}\cap\{X_1-X_2=0\}$. Note that the derivative of the new polynomial $A$ still depends on the non-negativity of the same polynomial $P$. Hence $\sharp C(\gamma_{s_1})\geq 1$ with large enough $s_1$ while  $\sharp C(\gamma_{0})=0$, and Theorem \ref{thm: Bohm local constant} can be applied to prove the existence.
\end{remark}

We end this section with the following remark to discuss the motivation in defining $\mathcal{S}$.
\begin{remark}
\label{rem: motivation for S}
Inspired by Corollary 5.8 in \cite{bohm_inhomogeneous_1998} and by the fact that $p_2^+$ is a stable node, we realized that taking the limit $s_1\to \infty$ for $\gamma_{s_1}$ may not provide enough information to prove the existence theorem for all $m$. From \eqref{eqn:_initial condition related to s_1} we know that $s_1$ is related to the initial condition $f$. Numerical data in Table 2 in \cite{bohm_inhomogeneous_1998} indicate that the winding angle of $\gamma_{s_1}$ around $\Phi$ may not be monotonic as $s_1$ increases. Hence it is reasonable to find a bounded interval of $s_1$ for which the winding angle of $\gamma_{s_1}$ is large enough.

From \eqref{eqn:_deri fof B} and \eqref{eqn:_all the linearized evaluation}, it appears that the upper bound for $s_1$ can be easily controlled by changing the coefficient for $YZ$ in $B$. Looking for an appropriate lower bound for $s_1$, on the other hand, is relatively more difficult.  Originally we choose to obtain the lower bound from $f_2\dot{f_2}\geq f_1\dot{f_1}$. The inequality is equivalent to $YX_2\geq ZX_1$. Although the inequality is geometrically motivated, showing it can be maintained before the winding angle gets large enough seems to be too difficult. We eventually define the polynomial $A$, whose first and second derivatives are relatively easier to be controlled by polynomials $P$ and $Q$.
\end{remark}

\section{Limiting Winding Angle}
\label{sec: Limiting Winding Angle}
From \cite{wink_cohomogeneity_2017} we know that $\gamma_\infty$ joins $p_0^+$ and $p_2^+$. By the symmetry of $\eqref{eqn:_new Positive Einstein system}$, we know that there exists $\bar{\gamma}_\infty$ that joins $p_2^-$ and $p_0^-$. As $\Phi$ joins $p_2^\pm$, we have this set of heteroclines $\{ \gamma_\infty, \Phi, \bar{\gamma}_\infty \}$ that joins $p_0^\pm$. From this perspective, the critical point $p_2^+$ is anticipated to play an important role in the qualitative analysis. 
Intuitively speaking, if $p_2^+$ were a stable focus in the Ricci-flat subsystem, the integral curve $\gamma_{s_1}$ would wind around $\Phi$ more frequently as $s_1$ increases. 
From Section \ref{sec: Linearization at critical points}, however, we learn that $p_2^+$ is a stable node in the Ricci-flat subsystem. Hence the winding behavior of $\gamma_{s_1}$ around $\Phi$ is less obvious. The new coordinate change helps us estimate the limiting winding angle of $\gamma_{s_1}$ as $s_1\to \infty$ and establish Theorem \ref{thm: second Einstein metric}. On the other hand, another set of heteroclines $\{\zeta_\infty, \Phi, \bar{\zeta}_\infty\}$ joins $p_1^\pm$. It is natural to ask if some heterocline other than $\zeta_0$ joins $p_1^\pm$. The new coordinate change also helps us to answer this question and prove Theorem \ref{thm: 8-sphere}.

We introduce some known estimates in the Ricci-flat system in the following. The Ricci-flat subsystem on $\mathcal{E}\cap \{H=1\}$ is simply a subsystem of (2.16) in \cite{chi_einstein_2020}, with $Y_1 \to \sqrt{2}$, $Y_2\to \sqrt{2}Y$ and $Y_3\to 2\sqrt{2}Z$. From Lemma 4.4 in \cite{chi_einstein_2020}, we learn that the compact set 
$$
\hat{\mathcal{B}}_{RF}:=\mathcal{E}\cap \{H=1\}\cap\{Y-Z\geq 0\}\cap \{X_2-X_1+2Y-2Z\geq 0\}\cap \left\{X_1\leq \frac{1}{2}\right\}\cap \{X_2\geq 0\}
$$
is invariant. Critical points $p_0^+$ and $p_1^+$ are on the boundary of $\hat{\mathcal{B}}_{RF}$ while $p_2^+$ is in the interior. Straightforward computations show that $\gamma_\infty$ and $\zeta_\infty$ are initially in $\hat{\mathcal{B}}_{RF}$. From Lemma 5.7 in \cite{chi_einstein_2020}, it is known that these two integral curves converge to $p_2^+$. We construct the following invariant set introduced in \cite{chi_cohomogeneity_2019}, which gives us more information on $\gamma_\infty$ near $p_2^+$. 
\begin{proposition}
\label{prop: Ricci flat invariant set}
The set 
$$
\mathcal{B}_{RF}:=\mathcal{E}\cap \{H=1\}\cap\{Y-(2m+3)Z\geq 0\}\cap \{X_2-X_1+Y-(2m+3)Z\geq 0\}
$$
is compact and invariant.
\end{proposition}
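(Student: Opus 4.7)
My plan splits the claim into compactness and invariance. Compactness is immediate from Remark \ref{rem: w-intersect}: the defining inequality $Y-(2m+3)Z\geq 0$ is exactly $Z-\rho Y\leq 0$ with $\rho = 1/(2m+3)$, which lies well inside the interval $[0,\frac{m(4m+8)+\sqrt{m^2(4m+8)^2+18m}}{6m})$ named there, so $\mathcal{B}_{RF}$ sits inside a compact subset of $\mathcal{E}$.

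For invariance, the face $\{H=1\}$ is preserved by \eqref{eqn:_derivative of H}. Abbreviate $E := Y-(2m+3)Z$ and $F := X_2-X_1+E$. On $\{H=1\}$ the flow simplifies since $1-H^2=0$, and a direct computation yields $E' = GE - X_1(Y+(2m+3)Z) + 2(2m+3)ZX_2$. Restricting to the face $\{E=0\}$ gives $E' = 2(2m+3)Z(X_2-X_1)$, and on that face $F = X_2-X_1$, which is $\geq 0$ inside $\mathcal{B}_{RF}$. Hence $E'\geq 0$ and the vector field points inward along $\{E=0\}$.

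On the remaining face $\{F=0\}$, substitute the factorization $R_2-R_1 = -2E(Y-Z)$ together with $X_2 = X_1-E$ and $X_1 = (1+4mE)/(4m+3)$ (the last two following from $F=0$ combined with $H=1$). After simplification one obtains $F' = E\,\psi$, where
$$\psi = 1 - X_1 - 2Y - 4(m+1)Z = \frac{2}{4m+3}\bigl[(2m+1)(1-3E) - (4m+3)(4m+5)Z\bigr].$$
Because $E\geq 0$ on $\mathcal{B}_{RF}$, the remaining task is to verify $\psi\geq 0$ throughout $\mathcal{B}_{RF}\cap\{F=0\}$.

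The hard part is this last inequality on the one-dimensional constraint curve. A short calculation gives $G = (1+12mE^2)/(4m+3)$ on $\{H=1,F=0\}$, so the conservation law reduces $\mathcal{B}_{RF}\cap\{H=1,F=0\}$ to a conic in the $(E,Z)$-plane of the form $9E^2 + \alpha(m)EZ + \beta(m)Z^2 = 1$ with explicit positive $\alpha,\beta$. For $m=1$ this conic degenerates to $(3E+21Z)^2=1$, along which $\psi$ vanishes identically and $\{F=0\}$ is itself invariant. For $m\geq 2$ it is a genuine ellipse whose first-quadrant arc runs from $(1/3,0)$ (where $\psi=0$, corresponding to $p_0^+$) to the $Z$-intercept $(0,1/\sqrt{\beta(m)})$. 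I would finish by establishing the polynomial identity
$$(2m+1)(16m^3+68m^2+78m+27) - (4m+3)(4m+5)^2 = 8(m-1)(m+1)(m+2)(4m+3),$$
which is non-negative for $m\geq 1$. This identity simultaneously yields $\psi\geq 0$ at the $Z$-intercept of the ellipse and forces the second intersection of $\{\psi=0\}$ with the ellipse to satisfy $E<0$ (equivalent to $\beta(m)\geq((4m+3)(4m+5)/(2m+1))^2$). Hence the first-quadrant arc lies entirely on the side $\psi\geq 0$, with equality only at $p_0^+$, completing the proof of invariance.
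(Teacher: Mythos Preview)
Your core computations are correct: the formulas for $E'$ on $\{E=0\}$, for $F'=E\psi$ on $\{F=0\}$, the expression for $\psi$, and the polynomial identity all check out. In fact your $\psi$ is exactly $\frac{1}{n}$ times the quantity $(4m+2)-(12m+6)Y-(8m^2+16m+12)Z$ that the paper isolates in \eqref{eqn:_derivative of X_2-X_1+Y-(2m+3)Z}, so the two approaches are computing the same thing.

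There are two issues with your final step. First, the conic $9E^2+\alpha EZ+\beta Z^2=1$ is \emph{not} an ellipse for $m\geq 2$: a direct check (e.g.\ $m=2$ gives $\alpha^2-36\beta\approx 8209>0$) shows it is a hyperbola. This does not by itself destroy the argument, since both asymptotes have negative slope and the first-quadrant portion is still a single arc from $(1/3,0)$ to $(0,1/\sqrt\beta)$, but the statement is wrong as written. Second, and more seriously, the asserted equivalence ``the second intersection of $\{\psi=0\}$ with the conic has $E<0$'' $\Leftrightarrow$ ``$\beta\geq((4m+3)(4m+5)/(2m+1))^2$'' is not justified. Substituting the line into the conic gives a quadratic in $E$ whose other root is $E^*=(\beta c^2-1)/(3-\alpha c+3\beta c^2)$ with $c=(2m+1)/((4m+3)(4m+5))$; your condition only controls the numerator, and you still need to determine the sign of $3-\alpha c+3\beta c^2$. (It does turn out to be negative for $m>1$, since $\alpha c-6=\tfrac{4(2m+3)(m-1)}{4m+5}$ and $3\beta c^2-3=\tfrac{24(m-1)(m+1)(m+2)}{(4m+5)^2}$ with the former dominating, but this is an additional verification you have not supplied.)

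The paper bypasses all of this conic geometry. Working in $(Y,Z)$ rather than $(E,Z)$, it reduces $\psi\geq 0$ to $(4m+2)^2\geq((12m+6)Y+(8m^2+16m+12)Z)^2$, then uses the conservation law \eqref{eqn:_RF restriction} to write $(4m+2)$ as a homogeneous quadratic in $(Y,Z)$. The difference of the two sides factors as $16n(m-1)\bigl[(4m^2+8m+3)YZ+(2m^2+4m+3)Z^2\bigr]$, which is manifestly nonnegative and vanishes identically at $m=1$. This is the same inequality you are after, but obtained by a one-line factorization rather than an intersection argument; you may find it cleaner to finish that way.
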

\begin{proof}
The compactness is derived from $Y-(2m+3)Z\geq 0$ and \eqref{eqn:_new positive conservation law2}.

Since
\begin{equation}
\label{eqn:_derivative of (2m+3)Z-Y}
\begin{split}
&\langle\nabla(Y-(2m+3)Z),V\rangle_{\mathcal{B}_{RF}\cap \{Y-(2m+3)Z=0\}}\\
&=(Y-(2m+3)Z)\left(H\left(G+\frac{1}{n}(1-H^2)\right)-X_1-(4m+6)Z\right)\\
&\quad +(4m+6)Z(X_2-X_1+Y-(2m+3)Z)\\
&=(4m+6)Z(X_2-X_1+Y-(2m+3)Z)\quad \text{since $Y-(2m+3)Z=0$}\\
&\geq 0,\\
\end{split}
\end{equation}
the vector field $V$ points inward on the boundary $\mathcal{B}_{RF}\cap\{Y-(2m+3)Z=0\}$. As for $\mathcal{B}_{RF}\cap\{X_2-X_1+Y-(2m+3)Z=0\}$, from \eqref{eqn:_new positive conservation law2} we have 
\begin{equation}
\label{eqn:_RF restriction}
\begin{split}
&\frac{12m}{n}(Y-(2m+3)Z)^2+6Y^2+4m(4m+8)YZ-12mZ^2=\frac{n-1}{n}\\
&\Leftrightarrow
18(2m+1)Y^2+8m(8m^2+16m+3)YZ+24m(2m^2+4m+3)Z^2=4m+2.\\
\end{split}
\end{equation}
Then we have 
\begin{equation}
\label{eqn:_derivative of X_2-X_1+Y-(2m+3)Z}
\begin{split}
&\langle\nabla(X_2-X_1+Y-(2m+3)Z),V\rangle_{\mathcal{B}_{RF}\cap \{X_2-X_1+Y-(2m+3)Z)=0\}}\\
&=(X_2-X_1+Y-(2m+3)Z))\left(H\left(G+\frac{1}{n}(1-H^2)-1\right)+\frac{4m}{n}Y+\frac{8m^2+24m+18}{n}Z\right)\\
&\quad +\frac{1}{n}(Y-(2m+3)Z)((4m+2)H-(12m+6)Y-(8m^2+16m+12)Z)\\
&=\frac{1}{n}(Y-(2m+3)Z)(4m+2-(12m+6)Y-(8m^2+16m+12)Z)\\
&\quad \text{since $H=1$ and $X_2-X_1+Y-(2m+3)Z=0$}.
\end{split}
\end{equation}
With $Y,Z\geq 0$, showing $4m+2-(12m+6)Y-(8m^2+16m+12)Z\geq 0$ is equivalent to showing $(4m+2)^2\geq ((12m+6)Y+(8m^2+16m+12)Z)^2$. Note that $(4m+2)^2$ is simply the LHS of \eqref{eqn:_RF restriction} multiplied by $4m+2$. Hence one can obtain the non-negativity by verifying
\begin{equation}
\begin{split}
&(4m+2)(18(2m+1)Y^2+8m(8m^2+16m+3)YZ+24m(2m^2+4m+3)Z^2)\\
&\geq ((12m+6)Y+(8m^2+16m+12)Z)^2\\
&\Leftrightarrow\\
&16 n(m-1)((4m^2+8m+3) YZ + (2m^2+ 4m+3)Z^2)\geq 0.
\end{split}
\end{equation}
Note that the equality is reached by $m=1$.
Hence \eqref{eqn:_derivative of X_2-X_1+Y-(2m+3)Z} is non-negative and it identically vanishes if $m=1$. Hence $\mathcal{B}_{RF}$ is invariant.
\end{proof}

\begin{remark}
\label{rem: more on spin(7)}
With \eqref{eqn:_new positive conservation law2} one can easily show that 
\begin{equation}
\begin{split}
\mathcal{B}_{\spin(7)}&=\mathcal{E}\cap \{Y-2Z-X_1=0\}\cap \{3Z-X_2=0\}\\
&=\mathcal{E}\cap\{H=1\}\cap\{X_2-X_1+Y-5Z=0\}.
\end{split}
\end{equation}
Therefore, the fact that \eqref{eqn:_derivative of X_2-X_1+Y-(2m+3)Z} identically vanishes for $m=1$ recovers the invariant set $\mathcal{B}_{\spin(7)}$ as in \eqref{eqn:_characterization of gamma_infty}.
\end{remark}

\begin{remark}
\label{rem: another Ricci flat invariant set}
By \eqref{eqn:_derivative of (2m+3)Z-Y} and \eqref{eqn:_derivative of X_2-X_1+Y-(2m+3)Z}, one can also show that the set 
$$
\tilde{\mathcal{B}}_{RF}:=\mathcal{E}\cap \{H=1\}\cap \{Y-(2m+3)Z\leq 0\}\cap \{X_2-X_1+Y-(2m+3)Z\leq 0\}.
$$
is also invariant. Furthermore,
\end{remark}

\begin{proposition}
\label{prop: no rotation}
For $m\geq 2$, the integral curve $\gamma_\infty$ is in $\mathcal{B}_{RF}$ initially. For $m=1$, the integral curve $\gamma_\infty$ stays on the boundary of $\mathcal{B}_{RF}$. For $m\geq 1$, the integral curve $\zeta_\infty$ is in $\tilde{\mathcal{B}}_{RF}$ initially.
\end{proposition}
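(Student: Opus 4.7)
The plan is to verify each of the three claims by directly substituting the leading-order linearized expansion of $\gamma_\infty$ (resp.\ $\zeta_\infty$) at $p_0^+$ (resp.\ $p_1^+$) into the defining inequalities of $\mathcal{B}_{RF}$ and $\tilde{\mathcal{B}}_{RF}$. Since these inequalities are polynomial, their sign at $\eta\to -\infty$ is controlled entirely by either the limit value at the critical point or the leading $e^{\frac{2}{3}\eta}$ / $e^{\frac{2}{n}\eta}$ coefficient, and the claims reduce to elementary sign checks.

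For $\gamma_\infty$ near $p_0^+$, the relevant tangent eigenvector is $v_2=(-4m(m+2),\,3(m+2),\,-2m(m+2),\,3)^\top$, so $\gamma_\infty\sim p_0^+ + e^{\frac{2}{3}\eta}v_2$ by \eqref{eqn:_linearized RF near p_0}. At $p_0^+$ we have $Y=\tfrac{1}{3}$, $Z=0$, so $Y-(2m+3)Z$ approaches $\tfrac{1}{3}>0$ and the first defining inequality of $\mathcal{B}_{RF}$ holds by continuity. The subtle inequality $X_2-X_1+Y-(2m+3)Z\geq 0$ vanishes at $p_0^+$, so I substitute $v_2$ to read off the leading coefficient:
\[
3(m+2)+4m(m+2)-2m(m+2)-3(2m+3) \;=\;(2m+3)(m+2-3)\;=\;(2m+3)(m-1).
\]
For $m\geq 2$ this is strictly positive, placing $\gamma_\infty$ in the interior of $\mathcal{B}_{RF}$ initially. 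For $m=1$ the leading coefficient vanishes; this is not a weakness of the expansion but reflects Remark \ref{rem: more on spin(7)}, which identifies $\mathcal{B}_{\spin(7)}$ with $\mathcal{E}\cap\{H=1\}\cap\{X_2-X_1+Y-5Z=0\}$. Since $\gamma_\infty\subset \mathcal{B}_{\spin(7)}$ by \eqref{eqn:_characterization of gamma_infty}, the function $X_2-X_1+Y-(2m+3)Z$ vanishes identically along $\gamma_\infty$ when $m=1$, while the other defining inequality $Y-5Z>0$ holds near $p_0^+$ by the limit computation above (and in fact throughout $\gamma_\infty$ by Proposition \ref{prop: S1 for m=1}). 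Hence $\gamma_\infty$ stays on the boundary of $\mathcal{B}_{RF}$ for $m=1$.

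For $\zeta_\infty$ near $p_1^+$, the tangent eigenvector is $w_2=(-4m,\,3,\,2m,\,-(2m+3))^\top$, and $\zeta_\infty\sim p_1^++e^{\frac{2}{n}\eta}w_2$ by \eqref{eqn:_linearized RF near p_1}. At $p_1^+$ we have $X_1=X_2=Y=Z=\tfrac{1}{n}$, giving the limits
\[
Y-(2m+3)Z \;\longrightarrow\; \frac{-2(m+1)}{n}<0,\qquad X_2-X_1+Y-(2m+3)Z \;\longrightarrow\; \frac{-2(m+1)}{n}<0,
\]
so both defining inequalities of $\tilde{\mathcal{B}}_{RF}$ hold strictly at $\eta\to -\infty$, and $\zeta_\infty$ is initially in $\tilde{\mathcal{B}}_{RF}$ by continuity. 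The only mildly delicate step is the cancellation producing the factor $(2m+3)(m-1)$ for $\gamma_\infty$; this is the reason the set $\mathcal{B}_{RF}$ has been chosen with the particular combination $X_2-X_1+Y-(2m+3)Z$, precisely so that its boundary is tight in the $\spin(7)$ case and strict for $m\geq 2$.
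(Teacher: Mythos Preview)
Your proof is correct and follows essentially the same approach as the paper's own proof, which simply cites the linearized expansions \eqref{eqn:_linearized RF near p_0} and \eqref{eqn:_linearized RF near p_1} and notes that $X_2-X_1+Y-5Z$ vanishes identically on $\gamma_\infty$ for $m=1$. Your version spells out the arithmetic that the paper leaves implicit, in particular the factorization $(2m+3)(m-1)$ of the leading coefficient along $v_2$, which neatly explains the dichotomy between $m=1$ and $m\geq 2$.
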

\begin{proof}
The statement is clear by \eqref{eqn:_linearized RF near p_0} and \eqref{eqn:_linearized RF near p_1}. Note that for $m=1$, the function $X_2-X_1+Y-5Z$ is identically zero on $\gamma_\infty$.
\end{proof}
To obtain more information on how $\gamma_{s_1}$ and $\zeta_{s_2}$ wind around $\Phi$ as $s_1,s_2\to \infty$, we consider the following ``cylindrical'' coordinate.
$$
r\sin(\theta)=X_1-X_2, \quad r\cos(\theta)=\sqrt{\frac{2m+2}{2m+3}}(Y-(2m+3)Z).
$$
The system \eqref{eqn:_new Positive Einstein system} is transformed into
\begin{equation}
\label{eqn:_rotational system}
\begin{bmatrix}
H\\
r\\
\theta\\
Y
\end{bmatrix}'=
\begin{bmatrix}
(H^2-1)\left(\frac{1}{n}+\frac{12m}{n} r^2\sin^2(\theta)\right)\\
rH\left(\frac{1}{n}+\frac{12m}{n}r^2\sin^2(\theta)\right)-Hr\sin^2(\theta)-\frac{H}{n}r\cos^2(\theta)+\left(\frac{m+2}{m+1}+\frac{3}{n}\right)r^2\sin(\theta)\cos^2(\theta)\\
2\sqrt{\frac{2m+2}{2m+3}}Y+\frac{1}{m+1}r\cos(\theta)-\frac{n-1}{n}H\sin(\theta)\cos(\theta)-\left(\frac{m+2}{m+1}+\frac{3}{n}\right)r\cos(\theta)\sin^2(\theta)\\
Y\left(\frac{12m}{n} H r^2\sin^2(\theta)-\frac{4m}{n}r\sin(\theta)\right)
\end{bmatrix}
\end{equation}
with conservation law \eqref{eqn:_new positive conservation law2} rewritten as 
$$
C_{\Lambda\geq 0}\colon \frac{12m}{n}r^2\sin^2(\theta)+6Y^2+4m(4m+8)Y\tilde{Z}-12m\tilde{Z}^2
=1-\frac{1}{n},\quad \tilde{Z}=\left(\frac{Y}{2m+3}-\frac{r\cos(\theta)}{\sqrt{(2m+2)(2m+3)}}\right).$$
The set $\mathcal{E}$ is then defined in the $(H,r,\theta,Y)$-coordinate accordingly. The variable $r$ tells the distance from a point to $\Phi$ and $\theta$ records the winding angle around $\Phi$. 

With the new conservation law, setting $r=0$ implies $Y=(2m+3)z_0$. Restricting \eqref{eqn:_rotational system} to the invariant set $\mathcal{E}\cap\{r=0\}$ gives the following subsystem.
\begin{equation}
\label{eqn:_rotational subsystem}
\begin{bmatrix}
H\\
\theta
\end{bmatrix}'=
\begin{bmatrix}
(H^2-1)\frac{1}{n}\\
2\sqrt{(2m+2)(2m+3)}z_0-\frac{n-1}{n}H\sin(\theta)\cos(\theta)
\end{bmatrix}, \quad r=0,\quad Y=(2m+3)z_0
\end{equation}
The subsystem above is essentially the integral curve $\Phi$. Straightforward computations show that \eqref{eqn:_rotational system} has the following four sequences of critical points in $\mathcal{E}\cap\{r=0\}$:
\begin{equation}
\label{eqn:_critical point in rotational}
\begin{split}
\{A_i^\pm:=(\pm 1,0,a_i^\pm,(2m+3)z_0)\}_{i\in\mathbb{Z}},\quad a_i^\pm=\pm\arctan\left(-\frac{\delta_1}{2\sqrt{(2m+3)(2m+2)}z_0}\right)+i\pi,\\
\{B_i^\pm:=(\pm 1,0,b_i^\pm,(2m+3)z_0)\}_{i\in\mathbb{Z}},\quad b_i^\pm=\pm\arctan\left(-\frac{\delta_2}{2\sqrt{(2m+3)(2m+2)}z_0}\right)+i\pi.
\end{split}
\end{equation}
Furthermore, for each $i\in\mathbb{Z}$ we have 
\begin{equation}
\label{eqn:_basic range}
\begin{split}
&a_i^+\in \left(i\pi,\frac{\pi}{4}+i\pi\right),\quad b_i^+\in\left(\frac{\pi}{4}+i\pi,\frac{\pi}{2}+i\pi\right),\\
&b_i^-\in \left(-\frac{\pi}{2}+i\pi,-\frac{\pi}{4}+i\pi\right),\quad a_i^-\in \left(-\frac{\pi}{4}+i\pi,i\pi\right).
\end{split}
\end{equation}

\begin{remark}
\label{rem: an old point into two new points}
Computations show that $A_i^+$'s and $B_i^+$'s are transformed respectively from the two stable eigenvectors $u_1$ and $u_2$ of the linearization at $p_2^+$. Recall from Section \ref{sec: Linearization at critical points} that both $u_1$ and $u_2$ are tangent to $\mathcal{E}\cap\{H=1\}$ and the corresponding eigenvalues $\delta_1$ and $\delta_2$ are real numbers and we have $\delta_2<\delta_1<0$. Each linearized solution to the Ricci-flat subsystem around $p_2^+$ must have $e^{\delta_2\eta}\ll e^{\delta_1\eta}$ as $\eta\to \infty$. Hence integral curves $\gamma_\infty$ and $\zeta_\infty$ converge to $p_2^+$ along $u_1$. Hence it is not surprising that $A_i^+$'s are sinks and $B_i^+$'s are saddles in the subsystem of \eqref{eqn:_rotational system} restricted to $\mathcal{E}\cap\{H=1\}$. Furthermore, for the subsystem \eqref{eqn:_rotational subsystem}, critical points $A_i^+$'s are saddles, and $B_i^+$'s are sources.
\end{remark}

Thanks to the invariant set $\mathcal{B}_{RF}$ in Proposition \ref{prop: Ricci flat invariant set}, whose boundary contains $p_2^+$. We are now ready to show that both $\gamma_\infty$ and $\zeta_\infty$ do not wind fully around $p_2^+$. 
\begin{proposition}
Consider the $(H,r,\theta,Y)$-coordinate. For $m\geq 2$,
the integral curve $\gamma_\infty$ joins the critical points $p_0^+$ and $A_0^+$. For $m=1$, the variable $\theta$ remains a constant along $\gamma_\infty$ and the integral curve joins $p_0^+$ and $B_0^+$. For $m\geq 1$, the integral curve $\zeta_\infty$ joins the critical points $p_1^+$ and $A_1^+$.
\end{proposition}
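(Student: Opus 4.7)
The strategy is to transport the invariant sets $\mathcal{B}_{RF}$ and $\tilde{\mathcal{B}}_{RF}$ into the $(H,r,\theta,Y)$-coordinates and combine them with the stable-node structure of $p_2^+$ in the Ricci-flat subsystem. A direct substitution gives $Y-(2m+3)Z=\sqrt{(2m+3)/(2m+2)}\,r\cos\theta$ and $X_2-X_1+Y-(2m+3)Z=r\big(\sqrt{(2m+3)/(2m+2)}\cos\theta-\sin\theta\big)$, so on $\mathcal{E}\cap\{H=1\}\cap\{r>0\}$ the set $\mathcal{B}_{RF}$ corresponds to $\theta\in[0,\arctan(\sqrt{(2m+3)/(2m+2)})]$ and $\tilde{\mathcal{B}}_{RF}$ to the same arc shifted by $\pi$.

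The crucial algebraic fact I would establish is $-\delta_2 \geq 2(2m+3)z_0$, with equality iff $m=1$. Squaring and using $n^2z_0^2=(2m+1)/(2m+(2m+3)^2)$ reduces the claim to $8(m-1)(m+1)(m+2)\geq 0$, which holds for $m\geq 1$ with equality only at $m=1$. In angular terms this says $b_0^+\geq \arctan(\sqrt{(2m+3)/(2m+2)})$ (with equality iff $m=1$), so for $m\geq 2$ the critical point $B_0^+$ lies strictly outside the $\theta$-range of $\mathcal{B}_{RF}$ and similarly $B_1^+$ lies outside $\tilde{\mathcal{B}}_{RF}$. Since $a_i^+<b_i^+$ by \eqref{eqn:_basic range}, the critical points $A_0^+$ and $A_1^+$ remain inside the respective admissible arcs.

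Next I would project the eigenvectors $u_1, u_2$ of the linearization at $p_2^+$ onto the $(r\sin\theta, r\cos\theta)$-plane. A short computation shows that $u_1$ corresponds to angular direction $a_0^+$ (or $a_1^+$ after a sign flip fixed by the approach quadrant), while $u_2$ corresponds to $b_0^+$ (resp.\ $b_1^+$). Since $\delta_2<\delta_1<0$, trajectories of the Ricci-flat subsystem converging to $p_2^+$ are tangent to $u_1$ unless they lie on the one-dimensional strong-stable manifold tangent to $u_2$, as noted in Remark \ref{rem: an old point into two new points}. By Proposition \ref{prop: no rotation}, $\gamma_\infty$ and $\zeta_\infty$ stay in the compact invariant sets $\mathcal{B}_{RF}$ and $\tilde{\mathcal{B}}_{RF}$. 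For $m\geq 2$ the $u_2$-manifold exits these sets, so $\gamma_\infty$ and $\zeta_\infty$ must approach $p_2^+$ tangentially to $u_1$. Combined with the initial angular positions $\theta|_{p_0^+}=\arctan(\sqrt{(2m+3)/(2m+2)})\in(\pi/4,\pi/2)$ and $\theta|_{p_1^+}=\pi$, this pins down the limits as $A_0^+$ and $A_1^+$ respectively.

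The remaining case is $m=1$ for $\gamma_\infty$. By Remark \ref{rem: more on spin(7)}, $\gamma_\infty$ lies on $\{X_2-X_1+Y-5Z=0\}$, which in the new coordinates is precisely $\{\theta=\arctan(\sqrt{5/4})\}=\{\theta=b_0^+\}$ using the equality case of the key inequality. Hence $\theta$ is constant along $\gamma_\infty$ and the convergence to $p_2^+$ forces $\gamma_\infty\to B_0^+$. For $\zeta_\infty$ at $m=1$, direct evaluation gives $X_2-X_1+Y-5Z=-\tfrac{1}{2}\neq 0$ at $p_1^+$, so $\zeta_\infty$ stays in the interior of $\tilde{\mathcal{B}}_{RF}$ and the $u_1$-argument yields $\zeta_\infty\to A_1^+$. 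The principal obstacle is the polynomial inequality $-\delta_2\geq 2(2m+3)z_0$; its equality case at $m=1$ is exactly what explains the distinguished behavior of $\gamma_\infty$ in dimension eight.
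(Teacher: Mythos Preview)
Your proof is correct and follows essentially the same route as the paper's. Both arguments confine $\gamma_\infty$ and $\zeta_\infty$ to the angular sectors determined by $\mathcal{B}_{RF}$ and $\tilde{\mathcal{B}}_{RF}$, then use the key inequality $b_0^+\geq \arctan\big(\sqrt{(2m+3)/(2m+2)}\big)$ (with equality iff $m=1$) to exclude the $B$-type critical points for $m\geq 2$; your explicit reduction of this inequality to $8(m-1)(m+1)(m+2)\geq 0$ is a nice addition the paper omits.

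Two small points. First, at $p_1^+$ with $m=1$ one has $X_2-X_1+Y-5Z=-4/7$, not $-1/2$ (harmless, the conclusion is unchanged). Second, for the $m=1$ case of $\zeta_\infty$ the paper argues slightly differently: it observes that the integral curve $\chi\subset\mathcal{B}_{\spin(7)}$ already occupies the unique strong-stable direction at $p_2^+$ from the $\tilde{\mathcal{B}}_{RF}$ side, hence $\zeta_\infty$ must take $A_1^+$. Your version---$\mathcal{B}_{\spin(7)}$ is invariant and $\zeta_\infty$ starts strictly off it, so it never reaches the $u_2$-manifold---is equivalent and arguably more direct. One phrasing to tighten: rather than saying ``the $u_2$-manifold exits these sets'', it is cleaner to say that the limiting angle along $u_2$ lies outside the $\theta$-range of the invariant set, which immediately rules out convergence to $B_i^+$.
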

\begin{proof} 
In the new coordinate, the critical point $p_0^+$ is $\left(1,\frac{1}{3}\sqrt{\frac{4m+5}{2m+3}},\arctan\left(\sqrt{\frac{2m+3}{2m+2}}\right),\frac{1}{3}\right)$. As $\gamma_\infty$ joins $p_0^+$ and $p_2^+$ in the $(X_1,X_2,Y,Z)$-coordinate, it is clear that the integral curve joins $p_0^+$ and one of $A_i^+$ or $B_i^+$ in the new coordinate. From Proposition \ref{prop: Ricci flat invariant set} and Proposition \ref{prop: no rotation} we know that $r\cos(\theta)\geq 0$ and $\sqrt{\frac{2m+3}{2m+2}}r\cos(\theta)-r\sin(\theta)\geq 0$ along $\gamma_\infty$. In particular, the second inequality reaches equality so that $\theta=\arctan\left(\sqrt{\frac{2m+3}{2m+2}}\right)$ along $\gamma_\infty$ for $m=1$. Note that $0<a_0^+<\arctan\left(\sqrt{\frac{2m+3}{2m+2}}\right)\leq b_0^+$ and the last inequality reaches equality for $m=1$. The $\theta$-coordinate for $p_0^+$ is positive and $\{\theta\geq 0\}$ is clearly invariant. Hence for $m\geq 2$, the integral curve $\gamma_\infty$ converges to $A_0^+$; for $m=1$, the integral curve $\gamma_\infty$ converges to $B_0^+$ as $\theta=b_0^+$ along $\gamma_\infty$.

In the $(H,r,\theta,Y)$-coordinate, the critical point $p_1^+$ is $\left(1,\sqrt{\frac{2m+2}{2m+3}}\frac{2m+2}{n},\pi,\frac{1}{n}\right)$. It is established in \cite{chi_einstein_2020} that $\zeta_\infty$ is an integral curve that joins $p_1^+$ and $p_2^+$ in the $(X_1,X_2,Y,Z)$-coordinate. Therefore, in the $(H,r,\theta,Y)$-coordinate, the integral curve $\zeta_\infty$ joins $p_1^+$ and one of $A_i^+$'s or $B_i^+$'s. As $\zeta_\infty$ is in $\tilde{\mathcal{B}}_{RF}$ initially, by Remark \ref{rem: another Ricci flat invariant set} we know that $\cos(\theta)=\sqrt{\frac{2m+2}{2m+3}}(Y-(2m+3)Z)\leq 0$ and $\sqrt{\frac{2m+3}{2m+2}}r\cos(\theta)-r\sin(\theta)\leq 0$ along $\zeta_\infty$. Hence we know that $\frac{\pi}{2}\leq \theta\leq \arctan\left(\sqrt{\frac{2m+3}{2m+2}}\right)+\pi\leq b_1^+$ along $\zeta_\infty$. Therefore $\zeta_\infty$ converges to $A_1^+$ for $m\geq 2$. We claim that the integral curve $\zeta_\infty$ also converges to $A_1^+$ for $m=1$. Recall Remark \ref{rem: integral curve chi} and Remark \ref{rem: more on spin(7)}. The integral curve $\chi$ also converges to $p_2^+$ and along $\chi$ we have $X_2-X_1\geq 0$ and $X_2-X_1+Y-5Z=0$. Hence $\chi$ converges to $B_1^+$ in the $(H,r,\theta,Y)$-coordinate. As the linearization at $B_1^+$ has only one stable eigenvector, we know that $\zeta_\infty$ converges to $A_1^+$.
\end{proof}

We claim the following lemma.
\begin{lemma}
\label{lem: limit angle}
Let $\Pi$ be the integral curve of the subsystem \eqref{eqn:_rotational subsystem} that emanates from $A_0^+$. Let $(0,0,\theta_*,(2m+3)z_0)$ be the midpoint of $\Pi$ at which it passes through $H=0$. For $m\geq 2$, we have $\theta_*=\lim\limits_{s_1\to\infty}\theta(\gamma_{s_1}\cap \{H=0\})$.
\end{lemma}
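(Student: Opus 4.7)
The plan is to combine three ingredients: convergence of $\gamma_{s_1}$ to $\gamma_\infty$ after an appropriate time shift, hyperbolicity of $A_0^+$ as a critical point of \eqref{eqn:_rotational system} with one-dimensional unstable manifold equal to $\Pi$, and the inclination ($\lambda$-)lemma to control the exit direction near $A_0^+$.

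First, I would establish that for any compact $\hat\eta$-interval $I$ and any $\delta>0$, there exists $S$ so that the reparametrized curve $\tilde\gamma_{s_1}(\hat\eta):=\gamma_{s_1}(\hat\eta-\tfrac{3}{2}\log s_1)$ satisfies $\sup_{\hat\eta\in I}\|\tilde\gamma_{s_1}(\hat\eta)-\gamma_\infty(\hat\eta)\|<\delta$ for all $s_1>S$. The key observation is that in the linearization \eqref{eqn:_linearized solution near p_0}, the substitution $\eta=\hat\eta-\tfrac{3}{2}\log s_1$ turns the leading term $e^{(2/3)\eta}v_1+s_1 e^{(2/3)\eta}v_2$ into $s_1^{-1}e^{(2/3)\hat\eta}v_1+e^{(2/3)\hat\eta}v_2$, whose pointwise limit as $s_1\to\infty$ coincides with the linearization \eqref{eqn:_linearized RF near p_0} of $\gamma_\infty$. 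Because the shift sends $\eta\to-\infty$, we remain inside the neighborhood of $p_0^+$ in which the linearization is accurate, and continuous dependence on initial conditions for \eqref{eqn:_new Positive Einstein system} propagates the convergence forward over any compact $\hat\eta$-interval.

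Next, I would verify that $A_0^+$ is a hyperbolic critical point of \eqref{eqn:_rotational system} on $\mathcal{E}$, with two-dimensional stable manifold contained in $\{H=1\}$ and one-dimensional unstable manifold coinciding with $\Pi$. Using local coordinates $(H,r,\theta)$ on $\mathcal{E}$, the $H$-equation yields the eigenvalue $2/n>0$ transverse to $\{H=1\}$, while the other two eigenvalues are tangent to the invariant manifold $\{H=1\}$ and are strictly negative by Remark \ref{rem: an old point into two new points} (which records that $A_0^+$ is a sink of the Ricci-flat subsystem there). Invariance of $\{r=0\}$ forces the unstable eigenspace to lie in $\{r=0\}$, so the unstable manifold is $\Pi$. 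The proposition immediately preceding the lemma identifies $\gamma_\infty$ as a stable-manifold trajectory through $A_0^+$ when $m\geq 2$.

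Finally, I would apply the $\lambda$-lemma. Given $\epsilon>0$, fix a small ball $B_\delta(A_0^+)\subset\mathcal{E}$ to which the $\lambda$-lemma applies and choose $\hat\eta_2$ with $\gamma_\infty(\hat\eta_2)\in B_{\delta/2}(A_0^+)$. By the first step, for all sufficiently large $s_1$ the point $\tilde\gamma_{s_1}(\hat\eta_2)$ lies in $B_\delta(A_0^+)$ within $O(\epsilon)$ of $W^s(A_0^+)$. The $\lambda$-lemma then forces the exit point of $\tilde\gamma_{s_1}$ across a small transversal to $W^u(A_0^+)=\Pi$, placed at unit unstable distance from $A_0^+$, to converge as $s_1\to\infty$ to the corresponding point of $\Pi$. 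Because $H'<0$ on $\mathcal{E}\cap\{H=0\}$ by \eqref{eqn:_derivative of H}, the flow carries that transversal to $\{H=0\}$ transversally in finite $\eta$-time, and continuous dependence propagates the convergence all the way to the section $\{H=0\}$. Reading off the $\theta$-coordinate of the limiting point $(0,0,\theta_*,(2m+3)z_0)=\Pi\cap\{H=0\}$ gives $\theta(\gamma_{s_1}\cap\{H=0\})\to\theta_*$. The principal obstacle is controlling the long transit of $\gamma_{s_1}$ near $A_0^+$: hyperbolicity and the $\lambda$-lemma are exactly what upgrade $C^0$ closeness of $\tilde\gamma_{s_1}$ to $\gamma_\infty$ (which lies in the stable manifold) into $C^0$ closeness after the critical point to $\Pi$ (the unstable manifold), after which standard continuous dependence finishes the proof.
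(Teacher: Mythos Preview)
Your proof is correct and follows the same strategy as the paper: both recognize $A_0^+$ as a hyperbolic saddle with two-dimensional stable manifold in $\{H=1\}$ (containing $\gamma_\infty$ for $m\geq 2$) and one-dimensional unstable manifold $\Pi$, track $\gamma_{s_1}$ into a neighborhood of $A_0^+$ via its proximity to $\gamma_\infty$, use the local hyperbolic dynamics to force the exit near $\Pi$, and then propagate by continuous dependence to the transversal section $\{H=0\}$. The only differences are tactical: the paper invokes Hartman--Grobman and an explicit cylinder/box construction in place of your $\lambda$-lemma, and it leaves the convergence $\gamma_{s_1}\to\gamma_\infty$ to an unspecified ``continuous dependence'' where you spell out the time-shift reparametrization $\eta=\hat\eta-\tfrac{3}{2}\log s_1$.
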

\begin{proof}
We think of \eqref{eqn:_rotational system} on $\mathcal{E}$ as a dynamical system in the three dimensional $Hr\theta$-space with $Y$ as a function in $(H,r,\theta)$. As mentioned in Remark \ref{rem: an old point into two new points}, each $A_i^+$ is a saddle whose linearization has two stable eigenvectors and one unstable eigenvector. Furthermore, the two stable eigenvectors are parallel to $\mathcal{E}\cap \{H=1\}$, meaning that each $A_i^+$ is a sink in the Ricci-flat subsystem and $\Pi$ is the only integral curve that emanates from $A_i^+$ in $\mathcal{E}\cap \{H<1\}$. By Hartman–Grobman theorem, there is a local homeomorphism $(\phi, \mathcal{U})$ defined around $A_0^+$ through which the system \eqref{eqn:_rotational system} is topologically equivalent to the following linear dynamical system
\begin{equation}
\label{eqn:_linearized solution xyz}
\begin{bmatrix}
x\\
y\\
z
\end{bmatrix}'=\begin{bmatrix}
-1&0&0\\
0&-1&0\\
0&0&1\\
\end{bmatrix}\begin{bmatrix}
x\\
y\\
z
\end{bmatrix}.
\end{equation}
In particular, we have 
$$\phi(A_0^+)=(0,0,0),\quad \phi(\mathcal{U}\cap\{H=1\})\subset \{(x,y,0)\mid x,y\in \mathbb{R}\},\quad \phi(\mathcal{U}\cap\Pi)\subset \{(0,0,z)\mid z\in \mathbb{R}\}.$$ Let $D_\epsilon$ be an open disk on the $xy$-plane with radius $\epsilon_1$. Let $U_1$ be the cylinder $D_{\epsilon_1}\times (-\epsilon_2,\epsilon_2]$. Note that integral curves in $U_1\cap \{z>0\}$ can only escape through the face $D_{\epsilon_1}\times \{\epsilon_2\}$. Choose small enough $\epsilon_1$ and $\epsilon_2$ so that $\mathcal{U}_1:=\phi^{-1}(U_1)$ is contained in $\mathcal{U}$. Then $p:=\phi^{-1}(0,0,\epsilon_2)$ is a point on $\mathcal{U}\cap \Pi$. 

Let $\mathcal{U}_0$ be an open neighborhood around the point $(0,0,\theta_*)$ in the $Hr\theta$-space. By the continuous dependence, there exists an open set $\mathcal{U}_2\ni p$ in $\mathcal{U}$ such that any point in $\mathcal{U}_2$ lies on an integral curve that enters $\mathcal{U}_0$. It is clear that $p\in \mathcal{U}_1\cap \mathcal{U}_2$. Modify $U_1$ by shrinking $\epsilon_1$ so that $D_{\epsilon_1}\times \{\epsilon_2\}$ is contained in $\phi(\mathcal{U}_2)$ while leaving $\epsilon_2$ unchanged.  Then correspondingly with the modified $\mathcal{U}_1$, an integral curve in $\mathcal{U}_1\cap \{H<1\}$ must enter $\mathcal{U}_2$. Since $\gamma_\infty$ converges to $A_0^+$, there exists a point $q\in \gamma_\infty\cap \mathcal{U}_1$. By the continuous dependence, there exists a large enough $N$ such that $s_1>N$ implies $\gamma_{s_1}$ must enter $\mathcal{U}_1\cap \{H<1\}$ and hence $\mathcal{U}_0$.
\end{proof}

Note that proving Lemma \ref{lem: limit angle} for $m=1$ is more subtle. As $\gamma_\infty$ converges to $B_0^+$ and there is an obvious integral curve that joins $B_0^+$ and $A_0^+$, a more delicate analysis is needed to show that $\gamma_{s_1}$ with a large enough $s_1$ must enter $U_0$. On the other hand, since $\zeta_{\infty}$ converges to $A_1^+$ for $m\geq 1$, we have the following corollary to Lemma \ref{lem: limit angle}.
\begin{corollary}
\label{cor: limit angle zeta}
For $m\geq 1$, the number $\theta_*+\pi$ is the limiting winding angle of $\zeta_{s_2}$ around $\Phi$ while $H>0$ as $s_2\to \infty$.
\end{corollary}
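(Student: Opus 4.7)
The plan is to reprise the proof of Lemma \ref{lem: limit angle} at the critical point $A_1^+$ in place of $A_0^+$, with $\zeta_\infty$ in place of $\gamma_\infty$ and $\zeta_{s_2}$ in place of $\gamma_{s_1}$. Two preparatory observations make the substitution essentially mechanical. First, the subsystem \eqref{eqn:_rotational subsystem} depends on $\theta$ only through $\sin(2\theta) = 2\sin\theta\cos\theta$, so it is invariant under the shift $\theta \mapsto \theta + \pi$; since $a_1^+ = a_0^+ + \pi$, the unique integral curve $\Pi_1$ of \eqref{eqn:_rotational subsystem} emanating from $A_1^+$ is the $\pi$-translate of $\Pi$, and hence crosses $\{H=0\}$ at $\theta = \theta_* + \pi$. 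Second, the linearization of the full system \eqref{eqn:_rotational system} at $A_1^+$ coincides with that at $A_0^+$: the only $\theta$-dependence of the $r$-equation that contributes to the Jacobian at a critical point with $r=0$ enters through $\sin^2\theta$ (the cubic-in-$r$ term $r^2\sin\theta\cos^2\theta$ drops out), and $\sin^2(a_1^+) = \sin^2(a_0^+)$. Consequently $A_1^+$ is again a hyperbolic saddle with a two-dimensional stable manifold tangent to $\mathcal{E}\cap\{H=1\}$ and a one-dimensional unstable manifold that is precisely $\Pi_1$ in $\mathcal{E}\cap\{H<1\}$.

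With these facts in place, the Hartman--Grobman normal form yields a local homeomorphism $\phi_1$ around $A_1^+$, and the cylindrical neighborhood $\mathcal{U}_1$ together with the capture neighborhood $\mathcal{U}_2$ of a chosen point $p \in \Pi_1$ are constructed exactly as in the proof of Lemma \ref{lem: limit angle}. Since $\zeta_\infty \subset \mathcal{E}\cap\{H=1\}$ converges to $A_1^+$, continuous dependence forces $\zeta_{s_2}$, for $s_2$ sufficiently large, to enter $\mathcal{U}_1 \cap \{H<1\}$ and therefore any prescribed open neighborhood $\mathcal{U}_0$ of the point $(0,0,\theta_*+\pi,(2m+3)z_0)$. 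By Proposition \ref{prop:turning or W-intersection} each $\zeta_{s_2}$ with $s_2>0$ admits either a turning point at $\{H=0\}$ or a $W$-intersection at $\{Y-Z=0\}$; since $\Pi_1$ lies in $\{Y-Z>0\}$ (where $Y=(2m+3)z_0$ and $Z=z_0$), an integral curve shadowing $\zeta_\infty \cup \Pi_1$ must reach $\{H=0\}$ before it can meet $\{Y-Z=0\}$. Hence $\theta(\zeta_{s_2}\cap\{H=0\}) \to \theta_*+\pi$, which is the limiting winding angle accrued while $H>0$.

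The only point that requires genuine care is the verification that the hyperbolic type at $A_1^+$ matches that at $A_0^+$ even though the full system \eqref{eqn:_rotational system} is not $\pi$-symmetric in $\theta$; this reduces to the one-line Jacobian check noted above, after which the rest of the argument is a bookkeeping translation of Lemma \ref{lem: limit angle}.
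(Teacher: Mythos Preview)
Your proposal is correct and follows exactly the approach the paper intends: the paper states the corollary as a direct consequence of Lemma~\ref{lem: limit angle}, relying on the fact (established just before the corollary) that $\zeta_\infty$ converges to $A_1^+$ for all $m\geq 1$, together with the $\pi$-translation symmetry of the subsystem~\eqref{eqn:_rotational subsystem} and the uniform saddle structure of every $A_i^+$ already noted in the proof of Lemma~\ref{lem: limit angle}. Your write-up simply unpacks that one-line reduction.

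One small imprecision worth tightening: the full Jacobians at $A_0^+$ and $A_1^+$ do not literally coincide, since the entry $\partial_r\theta'|_{r=0}$ picks up terms in $\cos\theta$ (both directly and through $\partial_r Y$ from the conservation law) which flip sign under $\theta\mapsto\theta+\pi$. What is true is that, at $r=0$ and $H=1$, the Jacobian is lower-triangular in the ordering $(H,r,\theta)$ (because $\partial_r H'=\partial_\theta H'=\partial_\theta r'=0$ there), so the eigenvalues are the diagonal entries $\tfrac{2}{n}$, $-\tfrac{n-1}{n}\sin^2(a_i^+)$, $-\tfrac{n-1}{n}\cos(2a_i^+)$, and these are genuinely $\pi$-periodic in $\theta$. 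Hence the hyperbolic type is identical at $A_0^+$ and $A_1^+$, which is all your argument requires.
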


\begin{lemma}
\label{lem: limiting angle determines the existence}
For $m\geq 2$, there exist at least two Einstein metrics on $\mathbb{HP}^{m+1}\sharp\overline{\mathbb{HP}}^{m+1}$ if $\theta_*< \pi$. For $m\geq 1$, there exist at least two Einstein metrics on $\mathbb{S}^{4m+4}$ if $\theta_*> \pi$.
\end{lemma}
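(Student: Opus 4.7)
The plan is to invoke Theorem \ref{thm: Bohm local constant} by exhibiting a jump in the counting function $\sharp C$ between parameter values whose behaviour is already controlled. For the first claim (on $\mathbb{HP}^{m+1}\sharp\overline{\mathbb{HP}}^{m+1}$), three inputs suffice: Proposition \ref{prop: positive X1-X2} gives $\sharp C(\gamma_0) = 0$; the proof of Theorem \ref{thm: first Einstein metric} gives $\sharp C(\gamma_{s_1}) \geq 1$ for every $s_1$ in $\bigl(\tfrac{3}{m-1},\tfrac{9(5m+3)(4m^2+4m+3)}{n^2(2m+3)(m-1)}\bigr)$; and, the new step, $\sharp C(\gamma_{s_1}) = 0$ for all sufficiently large $s_1$ under the hypothesis $\theta_* < \pi$. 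Granting these three inputs, Theorem \ref{thm: Bohm local constant} forces at least two $s_1$-values at which $\gamma_{s_1}$ is a heterocline joining $p_0^\pm$, yielding two distinct Einstein metrics on $\mathbb{HP}^{m+1}\sharp\overline{\mathbb{HP}}^{m+1}$.

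For the new step, Lemma \ref{lem: limit angle} combined with continuous dependence implies that for $s_1$ large enough $\gamma_{s_1}$ shadows $\gamma_\infty$ into an arbitrarily small neighbourhood of $A_0^+$ and then shadows $\Pi$ until $H$ reaches $0$. Proposition \ref{prop: no rotation} places $\gamma_\infty$ inside $\mathcal{B}_{RF}$, which in the $(H,r,\theta,Y)$-coordinate translates to $\cos\theta\geq 0$ and $\tan\theta\leq\sqrt{(2m+3)/(2m+2)}$, so $\theta\in\bigl(0,\arctan\sqrt{(2m+3)/(2m+2)}\bigr]$ throughout $\gamma_\infty$. Along $\Pi$, equation \eqref{eqn:_rotational subsystem} reads
\begin{equation*}
\theta' \;=\; 2\sqrt{(2m+2)(2m+3)}\,z_0 \;-\; \tfrac{n-1}{2n}H\sin(2\theta).
\end{equation*}
At $A_0^+$ the two terms cancel (with $H=1$), but as soon as $H<1$ the balance is broken so $\theta'>0$ near $a_0^+$, and whenever $\theta\in[\pi/2,\pi]$ the sign of $\sin(2\theta)$ makes $\theta'$ unambiguously positive. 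Hence $\theta$ is strictly monotone from $a_0^+$ up to $\theta_*$ along $\Pi$. Under the hypothesis $\theta_*<\pi$, the combined path $\gamma_\infty\cup\Pi$ keeps $\theta$ in $(0,\pi)$ throughout $\{H>0\}$; for $s_1$ large, $\sin\theta>0$ along $\gamma_{s_1}$ there, and since $r>0$ for finite $s_1$, the identity $X_1-X_2=r\sin\theta$ gives $X_1-X_2>0$, so $(\sqrt{Z/Y})'=\sqrt{Z/Y}(X_1-X_2)$ has no zero in $\{H>0,\,Y>Z\}$, forcing $\sharp C(\gamma_{s_1})=0$.

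The second claim follows the parallel template on the $\zeta_{s_2}$ family. Proposition \ref{prop: positive X1-X2} supplies $\sharp C(\zeta_{1/(2m+6)})=0$ (the quaternionic-K\"ahler metric on $\overline{\mathbb{HP}}^{m+1}$). At the other end, Corollary \ref{cor: limit angle zeta} identifies $\theta_*+\pi$ as the limiting winding angle of $\zeta_{s_2}$ around $\Phi$ while $H>0$. When $\theta_*>\pi$ this exceeds $2\pi$; since the initial $\theta$-coordinate of $p_1^+$ equals $\pi$, the angle along $\zeta_{s_2}$ must cross the value $2\pi$ inside $\{H>0\}$ for $s_2$ large, producing at least one zero of $X_1-X_2=r\sin\theta$ in the counting region and hence $\sharp C(\zeta_{s_2})\geq 1$. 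The $\zeta$-analogue of Theorem \ref{thm: Bohm local constant} applied on $s_2\in(1/(2m+6),\infty)$ then yields a heterocline joining $p_1^\pm$, distinct from the standard sphere $\zeta_0$.

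The main obstacle is the shadowing step: one must show that for $s_1$ (respectively $s_2$) large, the curve $\gamma_{s_1}$ (respectively $\zeta_{s_2}$) genuinely follows $\gamma_\infty\cup\Pi$ (respectively $\zeta_\infty\cup\Pi$) all the way to $\{H=0\}$ without escaping prematurely through a $W$-intersection at $\{Y=Z\}$. This is where Lemma \ref{lem: limit angle} and Corollary \ref{cor: limit angle zeta} do the heavy lifting via the Hartman-Grobman reduction near $A_0^+$ and $A_1^+$. One must also verify the monotonicity of $\theta$ along $\Pi$ used above and, in the sphere case, ensure that the crossing of $\theta\equiv 0\pmod\pi$ occurs before any such $W$-intersection.
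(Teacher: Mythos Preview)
Your overall strategy matches the paper's: use the counting function $\sharp C$ together with Theorem~\ref{thm: Bohm local constant}, anchored at $\gamma_0$ (respectively $\zeta_{1/(2m+6)}$) on one side and at large parameter values on the other. Your shadowing discussion for the $\mathbb{HP}$-case in fact supplies more detail than the paper, which simply asserts $\lim_{s_1\to\infty}\sharp C(\gamma_{s_1})=0$ as a consequence of Lemma~\ref{lem: limit angle}. (Your monotonicity claim for $\theta$ along $\Pi$ on the range $(a_0^+,\pi/2)$ is not fully justified by what you wrote, but it follows readily: at any critical point of $\theta$ with $\theta\in(0,\pi/2)$ and $H\in(-1,1)$ one checks $\theta''=-\tfrac{n-1}{2n}H'\sin(2\theta)>0$, ruling out local maxima.)

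There are, however, two genuine gaps in your sphere argument that the paper closes with short explicit steps. First, the $W$-intersection issue you flag at the end is not merely a technicality to be deferred: without it, you cannot conclude $\sharp C(\zeta_{s_2})\geq 1$, since the first zero of $X_1-X_2$ might fall outside the counting region $\{Y>Z\}$. The paper's resolution is a one-line ODE observation: along $\zeta_{s_2}$ with $s_2>0$ both $Y-Z$ and $X_2-X_1$ are positive initially, and since $(Y/Z)'=2(Y/Z)(X_2-X_1)$, the ratio $Y/Z$ is increasing (hence $>1$) as long as $X_2-X_1>0$. Therefore the first sign change of $X_2-X_1$ necessarily occurs while $Y>Z$, and is counted. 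Second, you assert that the heterocline $\zeta_{s_\bullet}$ with $s_\bullet>\tfrac{1}{2m+6}$ represents a metric distinct from the standard sphere, but distinct integral curves need not a priori give non-isometric metrics. The paper excludes the standard metric by noting that constant sectional curvature forces $\lim_{t\to 0}\tfrac{\ddot f_2}{f_2}\tfrac{f_1}{\ddot f_1}=1$, which by \eqref{eqn:_initial condition related to s_2} happens only at $s_2=0$.
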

\begin{proof}
Consider $\mathbb{HP}^{m+1}\sharp\overline{\mathbb{HP}}^{m+1}$ for $m\geq 2$.
Let $\tilde{s}_1=\max\left\{\frac{3}{m-1},s_\star\right\}$, where $\gamma_{s_\star}$ is the heterocline in the proof of theorem \ref{thm: first Einstein metric} that joins $p_0^\pm$. The proof of Theorem \ref{thm: first Einstein metric} shows that we have $\sharp C(\gamma_{s_1})\geq 1$ for $s_1\in \left(\tilde{s}_1,\frac{9(5m+3)(4m^2+4m+3)}{n^2(2m+3)(m-1)}\right)$. If $\theta_*< \pi$, then $\lim\limits_{s_1\to \infty} \sharp C(\gamma_{s_1})=0$ by Lemma \ref{lem: limit angle}. By Theorem \ref{thm: Bohm local constant} there exists a heterocline $\gamma_{s_{\star\star}}$ for some $s_{\star \star}\in \left(\tilde{s}_1,\infty\right)$ and $s_{\star \star}\neq s_\star$.

Consider $\mathbb{S}^{4m+4}$ for $m\geq 1$. If $\theta_*>\pi$, then $\Pi+(0,0,\pi,(2m+3)z_0)$ is an integral curve that emanates from $A_1^+$ and passes $(0,0,\theta_*+\pi,(2m+3)z_0)$ and $\theta_*+\pi>2\pi$.  By Proposition \ref{prop:turning or W-intersection}, any $\zeta_{s_2}$ with $s_2>0$ either has a turning point or a $W$-intersection point. We learn from the linearized solution \eqref{eqn:_linearized positive einstein near P1+} that along $\zeta_{s_2}$ with $s_2> 0$ the functions $Y-Z$ and $X_2-X_1$ are positive initially. Since $\left(\frac{Y}{Z}\right)'=2\frac{Y}{Z}(X_2-X_1)$ by \eqref{eqn:_derivative of Z-Y}, the function $Y-Z=Z\left(\frac{Y}{Z}-1\right)$ can only have a zero after $X_2-X_1$ changes sign. In particular, the function $X_2-X_1$ must vanish first before any $W$-intersection point occur.

Replace $(s_1,\gamma_{s_1})$ by $(s_2,\zeta_{s_2})$ in Definition \ref{def: counting} and Theorem \ref{thm: Bohm local constant}. For $\theta_*>\pi$ we have $\lim\limits_{s_2\to \infty}\sharp C(\zeta_{s_2})\geq 1$ by Corollary \ref{cor: limit angle zeta}. On the other hand, from Proposition \ref{prop: positive X1-X2} we know that $X_2-X_1> 0$ along $\zeta_{\frac{1}{2m+6}}$ and hence $\sharp C\left(\zeta_{\frac{1}{2m+6}}\right)=0$. By Theorem \ref{thm: Bohm local constant} there exists some $s_\bullet\in \left(\frac{1}{2m+6},\infty\right)$ such that $\zeta_{s_\bullet}$ is a heterocline. 

Assume $\theta_*>\pi$ so that such an $s_\bullet$ exists. We claim that the Einstein metric $\hat{g}_{\mathbb{S}^{4m+4}}$ represented by $\zeta_{s_\bullet}$ is not the standard sphere metric. If $\hat{g}_{\mathbb{S}^{4m+4}}$ were the standard sphere metric, it would have constant sectional curvature. In particular, we must have $\frac{\ddot{f_2}}{f_2}\frac{f_1}{\ddot{f_1}}=1$. From \eqref{eqn:_initial condition related to s_2} we must have $s_2=0$. Hence $\hat{g}_{\mathbb{S}^{4m+4}}$ is not the standard sphere metric. The proof is complete.
\end{proof}

From Lemma \ref{lem: limiting angle determines the existence} we learn that the number $\theta_*$ plays an important role in proving the existence of Einstein metrics on $\mathbb{HP}^{m+1}\sharp\overline{\mathbb{HP}}^{m+1}$ and $\mathbb{S}^{4m+4}$. We can apply the Runge--Kutta algorithm to estimate $\Pi$. Then one can only set the initial step near $A_0^+$, making the approximation less accurate as $m$ increases. To bypass this issue, we make use of the symmetry of \eqref{eqn:_rotational subsystem} and estimate $\theta_*$ using the 4th order Runge--Kutta algorithm with a well-defined initial step. 

Consider the $H\theta$-plane in the following. It is obvious that \eqref{eqn:_rotational subsystem} admits $\mathbb{Z}_2$-symmetry in the sign of $(H,\theta)$. The system also admits translation symmetry $(H,\theta)\rightarrow (H,\theta+i\pi)$ for any $i\in\mathbb{Z}$. Let $O_i^\pm$ be either $A_i^\pm$ or $B_i^\pm$ and correspondingly, let $o_i^\pm$ be either $a_i^\pm$ or $b_i^\pm$. Let $\Psi$ be the integral curve with the initial condition $\Psi(0)=(0,0)$. In general, the integral curve $\Psi$ must converge to some $O_i^-$ with $i\geq 1$. By symmetry, we know that $\Psi$ is defined on $\mathbb{R}$ and joins $O_{-i}^+$ and $O_i^-$. Then $\Psi+(0,\pi)$ is an integral curve that joins $O_{-i+1}^+$ and $O_{i+1}^-$ and passes through $(0,\pi)$, forming a barrier for estimating $\theta_*$. In particular, if $\Psi$ converges to $O_1^-$, then we have $o_1^-<\pi$. Then $\Psi+(0,\pi)$ passes $(0,\pi)$ and either joins $A_0^+$ and $A_2^-$ or joins $B_0^+$ and $B_2^-$. In both cases, the integral curve $\Pi$ passes through $(0,\theta_*)$ for some $\theta_*\leq \pi$. On the other hand, if $\Psi$ converges to $O_i^-$ with $i\geq 2$, then $\Psi+(0,\pi)$ passes $(0,\pi)$ and joins $O_{-i+1}^+$ and $O_{i+1}^-$. But $o_{-i+1}^+<a_0^-$, hence $\Pi$ passes through $(0,\theta_*)$ for some $\theta_*> \pi$. We present two sets of graphs on $H\theta$-plane for $\Psi$ and $\Psi+(0,\pi)$ in Figure \ref{fig: Psi curve 1} to illustrate our argument. 
\begin{figure}[h!] 
\centering
\begin{subfigure}{.3\textwidth}
  \centering 
  \includegraphics[clip,width=1\linewidth]{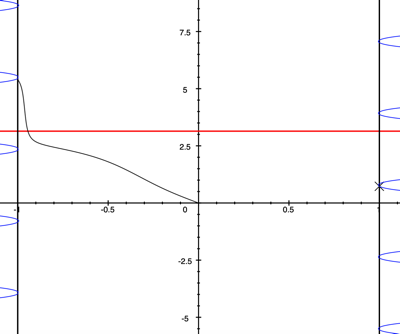}
    \caption{$m=1,\quad \Psi(\eta), \eta\geq 0$}
\end{subfigure}
\begin{subfigure}{.3\textwidth}
  \centering
  \includegraphics[clip,width=1\linewidth]{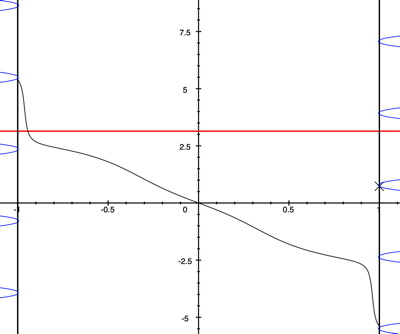}
    \caption{$m=1,\quad \Psi(\eta), \eta\in\mathbb{R}$}
\end{subfigure}
\begin{subfigure}{.3\textwidth}
  \centering
  \includegraphics[clip,width=1\linewidth]{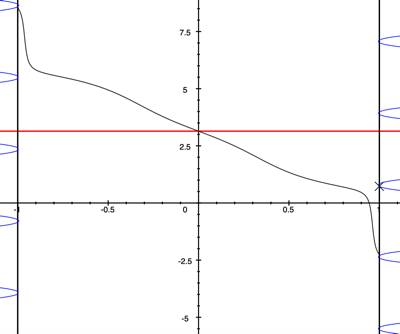}
  \caption{$m=1,\quad \Psi(\eta)+(0,\pi), \eta\in\mathbb{R}$}
\end{subfigure}\\
\begin{subfigure}{.3\textwidth}
  \centering
  \includegraphics[clip,width=1\linewidth]{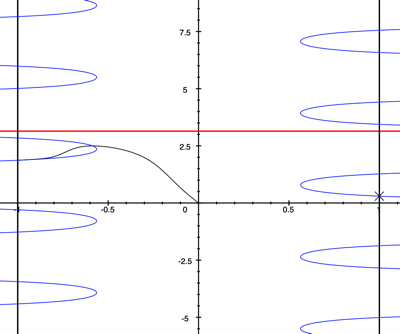}
  \caption{$m=5,\quad \Psi(\eta), \eta\geq 0$}
\end{subfigure}
\begin{subfigure}{.3\textwidth}
  \centering
  \includegraphics[clip,width=1\linewidth]{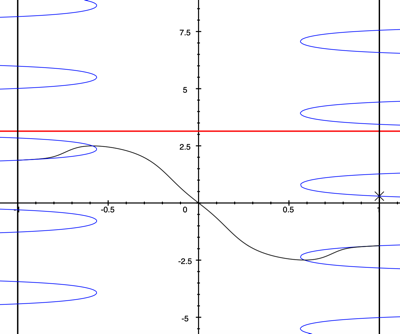}
  \caption{$m=5,\quad \Psi(\eta), \eta\in\mathbb{R}$}
\end{subfigure}
\begin{subfigure}{.3\textwidth}
  \centering
  \includegraphics[clip,width=1\linewidth]{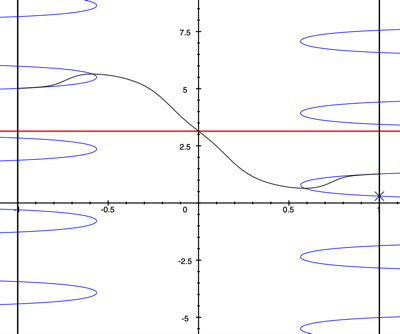}
  \caption{$m=5,\quad \Psi(\eta)+(0,\pi), \eta\in\mathbb{R}$}
\end{subfigure}
\caption{In each figure above, the red horizontal line is  $\theta=\pi$ and the blue curve is the level curve $2\sqrt{(2m+2)(2m+3)}z_0-\frac{n-1}{n}H\sin(\theta)\cos(\theta)=0$. The cross at $\{H=1\}$ is $A_0^+$. Plots above show that for $m=1$, the graph of $\Pi$ must be above $\Psi+(0,\pi)$ and hence $\theta_*>\pi$. On the other hand for $m=5$, the graph of $\Pi$ is below $\Psi+(0,\pi)$ and hence $\theta_*<\pi$.}
\label{fig: Psi curve 1}
\end{figure}
Note that if $\Psi$ converges to $A_1^-$, then $\Pi=\Psi+(0,\pi)$ and we have $\theta_*=\pi$. In such a situation, a more delicate analysis is needed to obtain $\lim\limits_{s_1\to \infty} \sharp C(\gamma_{s_1})$ and $\lim\limits_{s_2\to \infty} \sharp C(\zeta_{s_2})$. In fact, Lemma \ref{lem: limiting angle determines the existence} implies that as long as $\Psi$ does not converge to $A_1^-$ for a fixed $m$, we either have a second Einstein metric on $\mathbb{HP}^{m+1}\sharp\overline{\mathbb{HP}}^{m+1}$ or a new Einstein metric on $\mathbb{S}^{4m+4}$.

Fortunately, the 4th order Runge--Kutta algorithm shows that for $m\in [2,100]$, the integral curve $\Psi$ converges to $B_1^-$. Hence $\Pi$ must pass $(0,\theta_*)$ for some $\theta_*<\pi$. Therefore, by Lemma \ref{lem: limiting angle determines the existence}, the second Einstein metric exists on $\mathbb{HP}^{m+1}\sharp\overline{\mathbb{HP}}^{m+1}$ for $m\in[2,100]$. The function $H$ in \eqref{eqn:_rotational subsystem} can be solved explicitly. By Remark \ref{rem: fix the guage}, it is clear that $H=-\tanh\left(\frac{\eta}{n}\right).$ Hence the above discussion can be summarized into a more compact statement as in Theorem \ref{thm: second Einstein metric}. Since $\Psi$ converges to one of the $O_i^-$'s, from \eqref{eqn:_basic range} the inequality $\Omega<\frac{3\pi}{4}$ in Theorem \ref{thm: second Einstein metric} essentially means that $\Psi$ converges to $B_1^-$. As shown by the algorithm, for $m=1$ the integral curve $\Psi$ converges to $B_2^-$, meaning that $\Pi$ must pass $(0,\theta_*)$ for some $\theta_*>\pi$. We show some plots of $\Psi$ for different $m$ on the $H\theta$-plane in Figure \ref{fig: Psi curve 2}, generated by the 4th order Runge--Kutta algorithm with step size 0.01 in Grapher.
\begin{figure}[h!] 
\centering
\begin{subfigure}{.3\textwidth}
  \centering 
  \includegraphics[clip,width=1\linewidth]{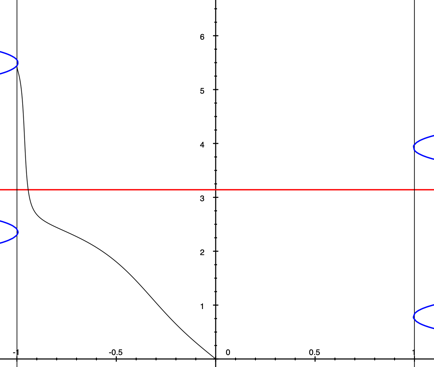}
    \caption{$m=1$}
\end{subfigure}
\begin{subfigure}{.3\textwidth}
  \centering
  \includegraphics[clip,width=1\linewidth]{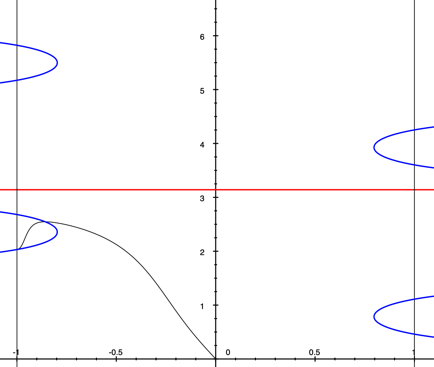}
    \caption{$m=2$}
\end{subfigure}
\begin{subfigure}{.3\textwidth}
  \centering
  \includegraphics[clip,width=1\linewidth]{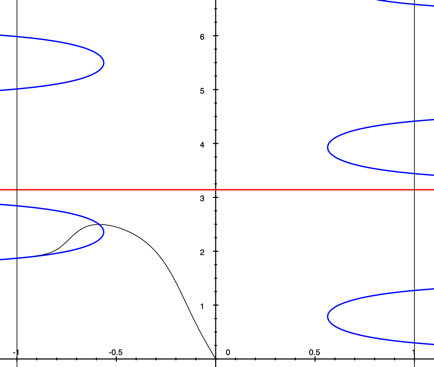}
  \caption{$m=5$}
\end{subfigure}\\
\begin{subfigure}{.3\textwidth}
  \centering
  \includegraphics[clip,width=1\linewidth]{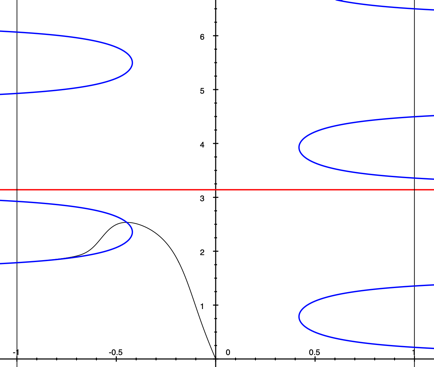}
  \caption{$m=10$}
\end{subfigure}
\begin{subfigure}{.3\textwidth}
  \centering
  \includegraphics[clip,width=1\linewidth]{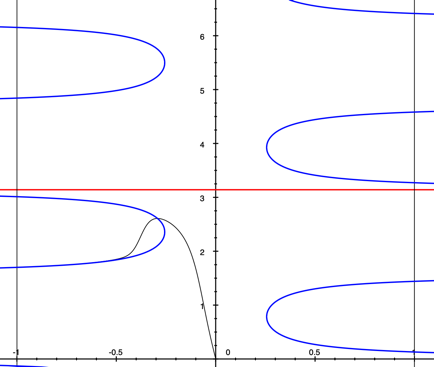}
  \caption{$m=29$}
\end{subfigure}
\begin{subfigure}{.3\textwidth}
  \centering
  \includegraphics[clip,width=1\linewidth]{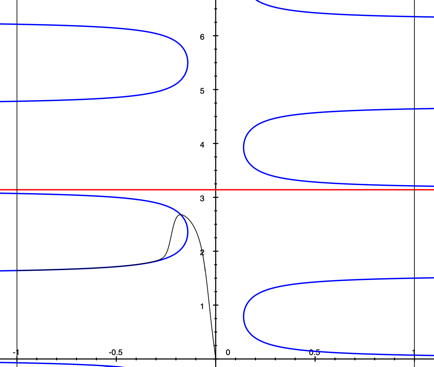}
  \caption{$m=100$}
\end{subfigure}
\caption{}
\label{fig: Psi curve 2}
\end{figure}

In the following lemma, we prove that $\Omega>\pi$ for $m=1$. Therefore, the inequality $\theta_*>\pi$ is indeed valid for $m=1$.
\begin{lemma}
\label{lem: for thm 1.3}
Let $\Psi$ be an integral curve to the following dynamical system
\begin{equation}
\label{eqn:_rotational subsystem m=1}
\begin{bmatrix}
H\\
\theta
\end{bmatrix}'=\tilde{V}(H,\theta):=
\begin{bmatrix}
(H^2-1)\frac{1}{7}\\
\frac{4\sqrt{5}}{21}-\frac{3}{7}H\sin(2\theta)
\end{bmatrix}
\end{equation}
with $\Psi(0)=(0,0)$. Then $\lim\limits_{\eta\to \infty}\theta(\Psi) >\pi$.
\end{lemma}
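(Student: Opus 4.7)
The plan is to integrate the $H$-equation explicitly and reduce the system to a scalar non-autonomous ODE in $\theta$, establish that $\theta$ passes the saddle value $a_1^- = \pi - \arctan(2/\sqrt{5}) \approx 2.412$ before a certain explicit time $\eta^*$, and then use invariance and a linearisation near $a_1^-$ to conclude that $\theta_\infty$ must be at least the next attractor $b_2^- = \tfrac{3\pi}{2}+\arctan(2/\sqrt{5})>\pi$.

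Solving $H' = (H^2-1)/7$ with $H(0)=0$ gives $H(\eta) = -\tanh(\eta/7)$, so the $\theta$-equation becomes $\theta'(\eta) = \tfrac{4\sqrt{5}}{21} + \tfrac{3}{7}\tanh(\eta/7)\sin(2\theta)$ with $\theta(0)=0$. Define $\eta^* := 7\ln(9+4\sqrt{5}) = 14\ln(2+\sqrt{5})$, the unique positive $\eta$ with $\tanh(\eta/7) = 4\sqrt{5}/9$; since $(9+4\sqrt{5})(9-4\sqrt{5})=1$, one checks $\cosh(\eta^*/7)=9$. The unconditional lower bound $\theta'(\eta) \geq \tfrac{4\sqrt{5}}{21} - \tfrac{3}{7}\tanh(\eta/7)$ is strictly positive on $[0,\eta^*)$, so $\theta$ is strictly increasing there. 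Using the sharper bound $\theta' \geq 4\sqrt{5}/21$ while $\theta \in [0,\pi/2]$, $\theta$ reaches $\pi/2$ at some time $\eta_0 \leq 21\pi/(8\sqrt{5})$.

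The crucial quantitative step is $\theta(\eta^*) > a_1^-$. Integrating the unconditional bound on $[\eta_0,\eta^*]$ and noting that the tail functional
\[
f(\eta_1) := \int_{\eta_1}^{\eta^*}\!\left(\tfrac{4\sqrt{5}}{21}-\tfrac{3}{7}\tanh(\eta/7)\right)d\eta
\]
is monotonically decreasing in $\eta_1 \in [0,\eta^*]$ (its derivative is $-(\tfrac{4\sqrt{5}}{21}-\tfrac{3}{7}\tanh(\eta_1/7))<0$), one may replace $\eta_0$ by its worst-case value $21\pi/(8\sqrt{5})$ to obtain the explicit lower bound
\[
\theta(\eta^*) \;\geq\; \tfrac{8\sqrt{5}}{3}\ln(2+\sqrt{5}) - 6\ln 3 + 3\ln\cosh\!\left(\tfrac{3\pi}{8\sqrt{5}}\right).
\]
The main obstacle is verifying that this exceeds $a_1^- = \pi - \arctan(2/\sqrt{5})$: numerically the right-hand side is about $2.4148$ while $a_1^-\approx 2.4118$, so the margin is only of order $3\times 10^{-3}$. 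A rigorous proof therefore demands explicit rational enclosures of $\sqrt{5}$, $\ln(2+\sqrt{5})$, $\ln 3$, $\cosh(3\pi/(8\sqrt{5}))$, and $\arctan(2/\sqrt{5})$, produced for instance via truncated Taylor series with verified remainders.

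Once this tight inequality is in hand, the rest is soft. At any time with $\theta=a_1^-$ one has $\sin(2a_1^-)=-4\sqrt{5}/9$ and hence $\theta'=\tfrac{4\sqrt{5}}{21}(1-\tanh(\eta/7))>0$ for every finite $\eta$; thus $\{\theta>a_1^-\}$ is forward-invariant, so $\theta(\eta)>a_1^-$ for all $\eta\geq\eta^*$. The limit $\theta_\infty$ must be a critical point of the $H=-1$ subsystem with $\theta_\infty\geq a_1^-$, so it lies in $\{a_1^-,b_2^-,a_2^-,\ldots\}$. To rule out $\theta_\infty=a_1^-$, set $\delta := \theta-a_1^-$; Taylor expansion turns the ODE into $\delta' = \tfrac{4\sqrt{5}}{21}(1-\tanh(\eta/7)) + \tfrac{2}{21}\tanh(\eta/7)\delta + O(\delta^2)$, in which both linear coefficients are strictly positive, so $\delta>0$ forces $\delta'>0$ and $\delta$ cannot decay to zero. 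Hence $\theta_\infty\geq b_2^- = \tfrac{3\pi}{2}+\arctan(2/\sqrt{5})>\pi$.
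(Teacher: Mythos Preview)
Your approach is logically sound and genuinely different from the paper's. The paper builds a piecewise barrier curve $\Theta(H)$ in the $(H,\theta)$-plane---a logarithmic piece on $[H_1,0]$, then the constant $\pi/2$, then the hyperbola $\theta=1/H+1+a_1^-$ on $[-1,H_2]$---and checks that the vector field $\tilde V$ pushes $\Psi$ above this barrier all the way to $A_1^-$; the exclusion of $A_1^-$ itself is done by matching against the single stable eigendirection $(2,-\sqrt{5})$, which forces $\theta-a_1^-<0$ near the saddle, contradicting the barrier. Your route instead parametrises by $\eta$, uses the worst-case bound $\theta'\geq \tfrac{4\sqrt5}{21}-\tfrac37\tanh(\eta/7)$, and aims to cross $a_1^-$ by the explicit time $\eta^*$.

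The genuine gap is the one you flag yourself: the inequality
\[
\frac{8\sqrt5}{3}\ln(2+\sqrt5)-6\ln 3+3\ln\cosh\!\Bigl(\frac{3\pi}{8\sqrt5}\Bigr)\;>\;\pi-\arctan\!\Bigl(\frac{2}{\sqrt5}\Bigr)
\]
has a margin of roughly $3\times10^{-3}$, and you do not verify it. Without that verification the proof is incomplete. The reason the margin is so thin is structural: on $[\eta_0,\eta^*]$ your unconditional bound replaces $\sin(2\theta)$ by $-1$ throughout, whereas $\theta$ spends a long time near $\pi/2$ where $\sin(2\theta)$ is close to $0$; you are throwing away almost all of the favourable drift. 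The paper's hyperbolic barrier $\theta=1/H+1+a_1^-$ is precisely designed to track this region more tightly, and its numerical checks (the sharpest being of order $1.5\times10^{-3}$, the inequality leading to the choice of the constant $0.3$) involve fewer nested transcendentals and reduce in the end to a clean quadratic sign check on $[-1,-\tfrac12]$.

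Two smaller remarks. First, your exclusion of $\theta_\infty=a_1^-$ via the expansion $\delta'=\tfrac{4\sqrt5}{21}(1-\tanh(\eta/7))+\tfrac{2}{21}\tanh(\eta/7)\,\delta+O(\delta^2)$ is correct in spirit, but ``$\delta>0$ forces $\delta'>0$'' is not literally true once the $O(\delta^2)$ term is present; you should say that for $\delta$ small the linear part dominates, giving $\delta'\geq c\delta$ for some $c>0$, which is incompatible with $\delta\to0$. The paper's version of this step---comparing the barrier against the stable eigendirection---is cleaner. Second, your identification $b_2^-=\tfrac{3\pi}{2}+\arctan(2/\sqrt5)$ and the conclusion $\theta_\infty\geq b_2^->\pi$ are correct.
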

\begin{proof}
Let $H_1=\frac{2}{e^{\frac{\sqrt{5}\pi}{6}}+1}-1\approx -0.527$ and $H_2= \frac{2}{\pi-2-2a_1^-}\approx -0.543$.
Define the following function.
$$
\Theta(H)=\left\{\begin{array}{ll}
\frac{1}{H}+1+a_1^-& -1\leq H\leq H_2\\
\frac{\pi}{2}&  H_2<H< H_1\\
\frac{3\sqrt{5}}{5}\ln\left(\frac{1-H}{1+H}\right)& H_1\leq H\leq 0
\end{array}\right..
$$
Recall in \eqref{eqn:_critical point in rotational} we have $a_1^-=\pi-\arctan\left(\frac{2\sqrt{5}}{5}\right)$. Then we have 
$$\sin(a_1^-)=\frac{2}{3},\quad \cos(a_1^-)=-\frac{\sqrt{5}}{3},\quad\sin(2a_1^-)=-\frac{4\sqrt{5}}{9},\quad \cos(2a_1^-)=\frac{1}{9}.$$
The function $\Theta$ is non-increasing. The numbers $H_1$ and $H_2$ are chosen so that $\{\theta-\Theta(H)=0\}$ is a continuous curve that joins the origin and $A_1^-$.

We show that $\tilde{V}$ restricted to $\{\theta-\Theta(H)=0\}$ points upward. For $(H,\theta)\in (H_1,0)\times \left(0,\frac{\pi}{2}\right)$, we have 
\begin{equation}
\label{eqn:_first barrier for m=1}
\begin{split}
\left\langle \nabla\left(\theta-\frac{3\sqrt{5}}{5}\ln\left(\frac{1-H}{1+H}\right)\right),\tilde{V}\right\rangle &=\frac{4\sqrt{5}}{21}-\frac{3}{7}H\sin(2\theta)+\frac{6\sqrt{5}}{5}\frac{1}{1-H^2}\frac{H^2-1}{7}\\
&=\frac{2\sqrt{5}}{105}-\frac{3}{7}H\sin(2\theta)\\
&> 0.
\end{split}
\end{equation}
Therefore, the function $\theta-\Theta(H)$ remains positive along $\Psi$ as $H$ decreases from $0$ to $H_1$. The computation above also shows that the function $\theta-\Theta(H)$ is positive along $\Psi(\eta)$ once the integral curve leaves the origin.
If $\theta=\frac{\pi}{2}$, then $\left\langle \nabla \theta,\tilde{V} \right\rangle=\frac{4\sqrt{5}}{21}>0.$ Hence $\theta>\frac{\pi}{2}$ along $\Psi$ as $H$ decreases from $H_1$ to $H_2$. 

Finally, as $H$ decreases from $H_2$ to $-1$, the function $\Theta(H)$ increases from $\frac{\pi}{2}$ to $a_1^-$. We first claim that for $\theta\in \left[\frac{\pi}{2},a_1^-\right]$, the inequality 
\begin{equation}
\label{eqn:_m=1technical1}
D(\theta):=\sin(2\theta)- \left(\theta-\frac{\pi}{2}\right)\left(\frac{7}{4}(\theta-a_1^-)+\frac{\sin(2a_1^-)}{a_1^--\frac{\pi}{2}}\right)\geq 0
\end{equation}
is valid. It is obvious that $D(a_1^-)=D\left(\frac{\pi}{2}\right)=0$. Hence there exists some $\theta_1\in\left(\frac{\pi}{2},a_1^-\right)$ that $D'(\theta_1)=0$ by the mean value theorem. On the other hand, we have
$$
D'(\theta)=2\cos(2\theta)-\left(\frac{7}{4}(\theta-a_1^-)+\frac{\sin(2a_1^-)}{a_1^--\frac{\pi}{2}}\right)-\frac{7}{4}\left(\theta-\frac{\pi}{2}\right),\quad D''(\theta)=-4\sin(2\theta)-\frac{7}{2}.
$$
Hence $D''>0$ on $\left(\frac{\pi}{2}+\frac{1}{2}\arcsin\left(\frac{7}{8}\right),\pi-\frac{1}{2}\arcsin\left(\frac{7}{8}\right)\right)$. Since $\pi-\frac{1}{2}\arcsin\left(\frac{7}{8}\right)\approx 2.609> 2.412\approx a_1^-$, it is clear that $D'$ decreases in $\left(\frac{\pi}{2},\frac{\pi}{2}+\frac{1}{2}\arcsin\left(\frac{7}{8}\right)\right)$ and increases in $\left(\frac{\pi}{2}+\frac{1}{2}\arcsin\left(\frac{7}{8}\right),a_1^-\right)$. Since 
\begin{equation}
\begin{split}
&D'\left(\frac{\pi}{2}\right)=-2-\left(\frac{7}{4}\left(\frac{\pi}{2}-a_1^-\right)+\frac{\sin(2a_1^-)}{a_1^--\frac{\pi}{2}}\right)\approx 0.654 >0,\\
&D'\left(a_1^-\right)=2\cos(2a_1^-)-\frac{\sin(2a_1^-)}{a_1^--\frac{\pi}{2}}-\frac{7}{4}\left(a_1^- -\frac{\pi}{2}\right)\approx -0.068 <0,
\end{split}
\end{equation}
we know that $D'$ only vanishes once in $\left[\frac{\pi}{2},a_1^-\right]$.
Therefore, the function $D$ is indeed positive for $\theta\in \left(\frac{\pi}{2},a_1^-\right)$.

Then from \eqref{eqn:_m=1technical1} we have
\begin{equation}
\label{eqn:_m=1technical2}
\begin{split}
&\left.\left\langle \nabla\left(\theta-\frac{1}{H}-1-a_1^-\right),\tilde{V} \right\rangle\right|_{\theta-\frac{1}{H}-1-a_1^-= 0}\\
 &=\frac{4\sqrt{5}}{21}-\frac{3}{7}H\sin(2\theta)+\frac{1}{7H^2}(H^2-1)\\
&\geq \frac{4\sqrt{5}}{21}-\frac{3}{7}H\left(\theta-\frac{\pi}{2}\right)\left(\frac{7}{4}(\theta-a_1^-)+\frac{\sin(2a_1^-)}{a_1^- -\frac{\pi}{2}}\right)+\frac{1}{7H^2}(H^2-1)\\
&=     \frac{4\sqrt{5}}{21}-\frac{3}{7}H\left(\frac{1+H}{H}+a_1^--\frac{\pi}{2}\right)\left(\frac{7}{4}\left(\frac{1+H}{H}\right)+\frac{\sin(2a_1^-)}{a_1^- -\frac{\pi}{2}}\right)+\frac{1}{7H^2}(H^2-1)\\
&\quad \text{since $\theta-\frac{1}{H}-1-a_1^-= 0$}\\
& = \frac{4\sqrt{5}}{21}(1+H)-\left(\frac{3}{4}\left(a_1^--\frac{\pi}{2}\right)+\frac{3}{7}\frac{\sin(2a_1^-)}{a_1^- -\frac{\pi}{2}}\right)(1+H)-\frac{3}{4}\frac{(1+H)^2}{H}+\frac{1}{7H^2}(H^2-1).
\end{split}
\end{equation}
Since $\frac{4\sqrt{5}}{21}-\left(\frac{3}{4}\left(a_1^--\frac{\pi}{2}\right)+\frac{3}{7}\frac{\sin(2a_1^-)}{a_1^- -\frac{\pi}{2}}\right)\approx 0.3015 >0.3$, the computation above continues as
\begin{equation}
\begin{split}
\left.\left\langle \nabla\left(\theta-\frac{1}{H}-1-a_1^-\right),\tilde{V} \right\rangle\right|_{\theta-\frac{1}{H}-1-a_1^-= 0} &\geq \frac{3}{10}(1+H)-\frac{3}{4}\frac{(1+H)^2}{H}+\frac{1}{7H^2}(H^2-1)\\
&=\frac{1+H}{H^2}\left(-\frac{9}{20}H^2-\frac{17}{28}H-\frac{1}{7}\right).
\end{split}
\end{equation}
A straightforward computation shows that the factor $-\frac{9}{20}H^2-\frac{17}{28}H-\frac{1}{7}$ is positive on $\left[-1,-\frac{1}{2}\right]$. As $H_2<-\frac{1}{2}$, it is proved that $\Psi$ does not pass the barrier $\theta-\Theta(H)=0$ where $H\in (-1,H_2)$. Therefore, we must have $\lim\limits_{\eta\to \infty}\theta(\Psi)\geq a_1^-$.

We claim that $\Psi$ does not converge to $A_1^-$. The linearization of \eqref{eqn:_rotational subsystem m=1} at $A_1^-$ is $\begin{bmatrix}
-\frac{2}{7}&0\\
\frac{4\sqrt{5}}{21}&\frac{2}{21}
\end{bmatrix}$, whose only stable eigenvalue and eigenvector are respectively $-\frac{2}{7}$ and $\begin{bmatrix}
2\\
-\sqrt{5}\\
\end{bmatrix}$. Hence the linearized solution in $\{H> -1\}$ takes the form of $A_1^- + \begin{bmatrix}
2\\
-\sqrt{5}\\
\end{bmatrix}e^{-\frac{2}{7}\eta}$. Suppose $\Psi$ is the integral curve that tends to $A_1^-$. We must have 
$$
(\theta-\Theta(H))(\Psi(\eta))\sim (\theta-\Theta(H)) \left(A_1^- +\begin{bmatrix}
2\\
-\sqrt{5}\\
\end{bmatrix}e^{-\frac{2}{7}\eta}\right)=-e^{-\frac{2}{7}\eta}\left(\sqrt{5}-\frac{2}{1-2e^{-\frac{2}{7}\eta}}\right)<0
$$
as $\eta\to \infty$, which is a contradiction. Therefore, the integral curve $\Psi$ converges to some $O_i^-$ with $i\geq 2$. As $A_2^->B_2^->\pi$, we conclude that $\lim\limits_{\eta\to \infty}\theta(\Psi)>\pi$. 
\end{proof}
Theorem \ref{thm: 8-sphere} is proved with Lemma \ref{lem: for thm 1.3} established.

\newpage
\section{Appendix}
\label{sec: Appendix}

\subsection{Detailed computation of \eqref{eqn:_deri fof A} and \eqref{eqn:_deri fof P}}
We first give detailed computation for $\left\langle\nabla A,V\right\rangle$ in the following.
\begin{equation}
\begin{split}
&\left\langle\nabla A,V\right\rangle\\
&=Y\left(X_2H\left(G+\frac{1}{n}(1-H^2)-1\right)+R_2-\frac{1}{n}(1-H^2)\right)+YX_2\left(H\left(G+\frac{1}{n}(1-H^2)\right)-X_1\right)\\
&\quad -\frac{3}{m}Z\left(\left(X_1+\frac{2m}{3}X_2\right)H\left(G+\frac{1}{n}(1-H^2)-1\right)+R_1+\frac{2m}{3}R_2-\left(1+\frac{2m}{3}\right)\frac{1}{n}(1-H^2)\right)\\
&\quad- \frac{3}{m}Z\left(X_1+\frac{2m}{3}X_2\right)\left(H\left(G+\frac{1}{n}(1-H^2)\right)+X_1-2X_2\right)\\
&=Y\left(2X_2H\left(G+\frac{1}{n}(1-H^2)\right)-X_2H+R_2-\frac{1}{n}(1-H^2)\right)+YX_2\left(-X_1\right)\\
&\quad -\frac{3}{m}Z\left(2\left(X_1+\frac{2m}{3}X_2\right)H\left(G+\frac{1}{n}(1-H^2)\right)-\left(X_1+\frac{2m}{3}X_2\right)H+R_1+\frac{2m}{3}R_2-\left(1+\frac{2m}{3}\right)\frac{1}{n}(1-H^2)\right)\\
&\quad- \frac{3}{m}Z\left(X_1+\frac{2m}{3}X_2\right)(X_1-2X_2)\\
&=2YX_2H\left(G+\frac{1}{n}(1-H^2)\right)+Y\left(-X_2H+R_2-\frac{1}{n}(1-H^2)\right)+YX_2\left(-X_1\right)\\
&\quad -2\frac{3}{m}Z\left(X_1+\frac{2m}{3}X_2\right)H\left(G+\frac{1}{n}(1-H^2)\right)\\
&\quad -\frac{3}{m}Z\left(-\left(X_1+\frac{2m}{3}X_2\right)H+R_1+\frac{2m}{3}R_2-\left(1+\frac{2m}{3}\right)\frac{1}{n}(1-H^2)\right)\\
&\quad- \frac{3}{m}Z\left(X_1+\frac{2m}{3}X_2\right)(X_1-2X_2)\\
&=2AH\left(G+\frac{1}{n}(1-H^2)\right)+Y\left(-X_2H+R_2-\frac{1}{n}(1-H^2)\right)+YX_2\left(-X_1\right)\\
&\quad -\frac{3}{m}Z\left(-\left(X_1+\frac{2m}{3}X_2\right)H+R_1+\frac{2m}{3}R_2-\left(1+\frac{2m}{3}\right)\frac{1}{n}(1-H^2)\right)\\
&\quad- \frac{3}{m}Z\left(X_1+\frac{2m}{3}X_2\right)(X_1-2X_2)\\
&=2AH\left(G+\frac{1}{n}(1-H^2)\right)+Y\left(-X_2(2X_1+(4m+2)X_2)-X_2(2X_1-2X_2)+R_2-\frac{1}{n}(1-H^2)\right)\\
&\quad -\frac{3}{m}Z\left(-\left(X_1+\frac{2m}{3}X_2\right)(2X_1+(4m+2)X_2)+R_1+\frac{2m}{3}R_2-\left(1+\frac{2m}{3}\right)\frac{1}{n}(1-H^2)\right)\\
&=2AH\left(G+\frac{1}{n}(1-H^2)\right)-YX_2(2X_1+(4m+2)X_2)+Y\left(-X_2(2X_1-2X_2)+R_2-\frac{1}{n}(1-H^2)\right)\\
&\quad +\frac{3}{m}Z\left(X_1+\frac{2m}{3}X_2\right)(2X_1+(4m+2)X_2)-\frac{3}{m}Z\left(R_1+\frac{2m}{3}R_2-\left(1+\frac{2m}{3}\right)\frac{1}{n}(1-H^2)\right)\\
&=A\left(2H\left(G+\frac{1}{n}(1-H^2)\right)-(2X_1+(4m+2)X_2)\right)\\
&\quad +Y\left(-X_2(2X_1-2X_2)+R_2-\frac{1}{n}(1-H^2)\right) -\frac{3}{m}Z\left(R_1+\frac{2m}{3}R_2-\left(1+\frac{2m}{3}\right)\frac{1}{n}(1-H^2)\right).
\end{split}
\end{equation}
The last two terms in the computation above continue as the following.
\begin{equation}
\begin{split}
&Y\left(-X_2(2X_1-2X_2)+R_2-\frac{1}{n}(1-H^2)\right) -\frac{3}{m}Z\left(R_1+\frac{2m}{3}R_2-\left(1+\frac{2m}{3}\right)\frac{1}{n}(1-H^2)\right)\\
&=Y\left(R_2-\frac{1}{n}(1-H^2)\right)-2YX_2(X_1-X_2)-\frac{3}{m}Z\left(R_1+\frac{2m}{3}R_2-\left(1+\frac{2m}{3}\right)\frac{1}{n}(1-H^2)\right)\\
&=\frac{1}{X_2(X_1+\frac{2m}{3}X_2)}\left(YX_2(X_1+\frac{2m}{3}X_2)\left(R_2-\frac{1}{n}(1-H^2)\right)\right)\\
&\quad -\frac{1}{X_2(X_1+\frac{2m}{3}X_2)}\left(2YX_2^2(X_1-X_2)\left(X_1+\frac{2m}{3}X_2\right)\right)\\
&\quad +\frac{1}{X_2(X_1+\frac{2m}{3}X_2)}\left(-\frac{3}{m}ZX_2(X_1+\frac{2m}{3}X_2)\left(R_1+\frac{2m}{3}R_2-\left(1+\frac{2m}{3}\right)\frac{1}{n}(1-H^2)\right)\right)\\
&=\frac{1}{X_2(X_1+\frac{2m}{3}X_2)}\left(YX_2(X_1+\frac{2m}{3}X_2)\left(R_2-\frac{1}{n}(1-H^2)\right)\right)\\
&\quad -\frac{1}{X_2(X_1+\frac{2m}{3}X_2)}\left(2YX_2^2(X_1-X_2)\left(X_1+\frac{2m}{3}X_2\right)\right)\\
&\quad +\frac{1}{X_2(X_1+\frac{2m}{3}X_2)}\left((A-YX_2)X_2\left(R_1+\frac{2m}{3}R_2-\left(1+\frac{2m}{3}\right)\frac{1}{n}(1-H^2)\right)\right)\\
&=\frac{1}{X_2(X_1+\frac{2m}{3}X_2)}\left(YX_2(X_1+\frac{2m}{3}X_2)\left(R_2-\frac{1}{n}(1-H^2)\right)\right)\\
&\quad -\frac{1}{X_2(X_1+\frac{2m}{3}X_2)}\left(2YX_2^2(X_1-X_2)\left(X_1+\frac{2m}{3}X_2\right)\right)\\
&\quad +\frac{1}{X_2(X_1+\frac{2m}{3}X_2)}\left(-YX_2^2\left(R_1+\frac{2m}{3}R_2-\left(1+\frac{2m}{3}\right)\frac{1}{n}(1-H^2)\right)\right)\\
&\quad +\frac{1}{X_2(X_1+\frac{2m}{3}X_2)}\left(AX_2\left(R_1+\frac{2m}{3}R_2-\left(1+\frac{2m}{3}\right)\frac{1}{n}(1-H^2)\right)\right)\\
&=\frac{Y}{X_1+\frac{2m}{3}X_2}(X_1+\frac{2m}{3}X_2)\left(R_2-\frac{1}{n}(1-H^2)\right)\\
&\quad -\frac{Y}{X_1+\frac{2m}{3}X_2}\left(2X_2(X_1-X_2)\left(X_1+\frac{2m}{3}X_2\right)\right)\\
&\quad +\frac{Y}{X_1+\frac{2m}{3}X_2}\left(-X_2\left(R_1+\frac{2m}{3}R_2-\left(1+\frac{2m}{3}\right)\frac{1}{n}(1-H^2)\right)\right)\\
&\quad +\frac{A}{X_1+\frac{2m}{3}X_2}\left(R_1+\frac{2m}{3}R_2-\left(1+\frac{2m}{3}\right)\frac{1}{n}(1-H^2)\right)\\
&=\frac{Y}{X_1+\frac{2m}{3}X_2}\left((X_1+\frac{2m}{3}X_2)\left(R_2-\frac{1}{n}(1-H^2)\right)-X_2\left(R_1+\frac{2m}{3}R_2-\left(1+\frac{2m}{3}\right)\frac{1}{n}(1-H^2)\right)\right)\\
&\quad -\frac{Y}{X_1+\frac{2m}{3}X_2}\left(2X_2(X_1-X_2)\left(X_1+\frac{2m}{3}X_2\right)\right)\\
&\quad +\frac{A}{X_1+\frac{2m}{3}X_2}\left(R_1+\frac{2m}{3}R_2-\left(1+\frac{2m}{3}\right)\frac{1}{n}(1-H^2)\right)\\
&=\frac{Y}{X_1+\frac{2m}{3}X_2}\left(X_1\left(R_2-\frac{1}{n}(1-H^2)\right)-X_2\left(R_1-\frac{1}{n}(1-H^2)\right)-2X_2(X_1-X_2)\left(X_1+\frac{2m}{3}X_2\right)\right)\\
&\quad +\frac{A}{X_1+\frac{2m}{3}X_2}\left(R_1+\frac{2m}{3}R_2-\left(1+\frac{2m}{3}\right)\frac{1}{n}(1-H^2)\right)\\
&=\frac{Y}{X_1+\frac{2m}{3}X_2}P +\frac{A}{X_1+\frac{2m}{3}X_2}\left(R_1+\frac{2m}{3}R_2-\left(1+\frac{2m}{3}\right)\frac{1}{n}(1-H^2)\right).
\end{split}
\end{equation}
Hence computation \eqref{eqn:_deri fof A} indeed holds.

We give detailed computation for $\langle \nabla P,V\rangle$ in the following. First of all, we have 
\begin{equation}
\begin{split}
&\left\langle\nabla \left(R_1-\frac{1}{n}(1-H^2)\right),V\right\rangle\\
&= \left\langle\nabla \left(2Y^2+4mZ^2-\frac{1}{n}(1-H^2)\right),V\right\rangle\\
&= 4Y^2\left(H\left(G+\frac{1}{n} (1-H^2)\right)-X_1\right)+8mZ^2\left(H\left(G+\frac{1}{n} (1-H^2)\right)+X_1-2X_2\right)\\
&\quad + \frac{2H}{n}(H^2-1)\left(G+\frac{1}{n} (1-H^2)\right)\\
&= 2\left(R_1-\frac{1}{n}(1-H^2)\right)H\left(G+\frac{1}{n} (1-H^2)\right)-4Y^2X_1+8mZ^2(X_1-2X_2)\\
&=2\left(R_1-\frac{1}{n}(1-H^2)\right)H\left(G+\frac{1}{n} (1-H^2)\right)-(4Y^2-8mZ^2)(X_1-X_2)-2R_1X_2,
\end{split}
\end{equation}
and 
\begin{equation}
\begin{split}
&\left\langle\nabla \left(R_2-\frac{1}{n}(1-H^2)\right),V\right\rangle\\
&= \left\langle\nabla \left((4m+8)YZ-6Z^2-\frac{1}{n}(1-H^2)\right),V\right\rangle\\
&= 2(4m+8)YZ\left(H\left(G+\frac{1}{n} (1-H^2)\right)-X_2\right)-12Z^2\left(H\left(G+\frac{1}{n} (1-H^2)\right)+X_1-2X_2\right)\\
&\quad + \frac{2H}{n}(H^2-1)\left(G+\frac{1}{n} (1-H^2)\right)\\
&= 2\left(R_2-\frac{1}{n}(1-H^2)\right)H\left(G+\frac{1}{n} (1-H^2)\right)-2(4m+8)YZX_2-12Z^2(X_1-2X_2)\\
&=2\left(R_2-\frac{1}{n}(1-H^2)\right)H\left(G+\frac{1}{n} (1-H^2)\right)-((4m+8)YZ+6Z^2)(X_1-X_2)-R_2(3X_2-X_1).
\end{split}
\end{equation}
Then it follows that
\begin{equation}
\label{eqn:_X1R2-X2R1}
\begin{split}
&X_1\left\langle\nabla \left(R_2-\frac{1}{n}(1-H^2)\right),V\right\rangle-X_2\left\langle\nabla \left(R_1-\frac{1}{n}(1-H^2)\right),V\right\rangle\\
&=2X_1\left(R_2-\frac{1}{n}(1-H^2)\right)H\left(G+\frac{1}{n} (1-H^2)\right)-((4m+8)YZ+6Z^2)X_1(X_1-X_2)-R_2X_1(3X_2-X_1)\\
&\quad -2X_2\left(R_1-\frac{1}{n}(1-H^2)\right)H\left(G+\frac{1}{n} (1-H^2)\right)+(4Y^2-8mZ^2)X_2(X_1-X_2)+2R_1X_2^2\\
&=2X_1\left(R_2-\frac{1}{n}(1-H^2)\right)H\left(G+\frac{1}{n} (1-H^2)\right)-((4m+8)YZ+6Z^2)X_1(X_1-X_2)-R_2X_1(3X_2-X_1)\\
&\quad -2X_2\left(R_1-\frac{1}{n}(1-H^2)\right)H\left(G+\frac{1}{n} (1-H^2)\right)+(4Y^2-8mZ^2)X_2(X_1-X_2)+2R_1X_2^2\\
&\quad -4X_2\left(X_1+\frac{2m}{3}X_2\right)(X_1-X_2)H\left(G+\frac{1}{n} (1-H^2)\right)\\
&\quad +4X_2\left(X_1+\frac{2m}{3}X_2\right)(X_1-X_2)H\left(G+\frac{1}{n} (1-H^2)\right)\\
&=2PH\left(G+\frac{1}{n} (1-H^2)\right)\\
&\quad -((4m+8)YZ+6Z^2)X_1(X_1-X_2)-R_2X_1(3X_2-X_1)+(4Y^2-8mZ^2)X_2(X_1-X_2)+2R_1X_2^2\\
&\quad +4X_2\left(X_1+\frac{2m}{3}X_2\right)(X_1-X_2)H\left(G+\frac{1}{n} (1-H^2)\right)\\
&=2PH\left(G+\frac{1}{n} (1-H^2)\right)\\
&\quad +(X_1-X_2)\left((4Y^2-8mZ^2)X_2-((4m+8)YZ+6Z^2)X_1\right) -R_2X_1(3X_2-X_1)+2R_1X_2^2\\
&\quad +4X_2\left(X_1+\frac{2m}{3}X_2\right)(X_1-X_2)H\left(G+\frac{1}{n} (1-H^2)\right).
\end{split}
\end{equation}
Then we have
\begin{equation}
\begin{split}
&\langle\nabla P,V\rangle\\
&=\left\langle\nabla \left(X_1\left(R_2-\frac{1}{n}(1-H^2)\right)-X_2\left(R_1-\frac{1}{n}(1-H^2)\right)-2X_2\left(X_1+\frac{2m}{3}X_2\right)(X_1-X_2)\right),V\right\rangle\\
&=\left(X_1H\left(G+\frac{1}{n} (1-H^2)-1\right)+R_1-\frac{1}{n}(1-H^2)\right)\left(R_2-\frac{1}{n}(1-H^2)\right)+X_1\left\langle\nabla \left(R_2-\frac{1}{n}(1-H^2)\right),V\right\rangle\\
&\quad -\left(X_2H\left(G+\frac{1}{n} (1-H^2)-1\right)+R_2-\frac{1}{n} (1-H^2)\right)\left(R_1-\frac{1}{n}(1-H^2)\right)-X_2\left\langle\nabla \left(R_1-\frac{1}{n}(1-H^2)\right),V\right\rangle\\
&\quad -2\left(X_1+\frac{2m}{3}X_2\right)(X_1-X_2)\left(X_2H\left(G+\frac{1}{n} (1-H^2)-1\right)+R_2-\frac{1}{n} (1-H^2)\right)\\
&\quad -2X_2(X_1-X_2)\left(\left(X_1+\frac{2m}{3}X_2\right)H\left(G+\frac{1}{n} (1-H^2)-1\right)+R_1+\frac{2m}{3}R_2-\left(1+\frac{2m}{3}\right)\frac{1}{n}(1-H^2)\right)\\
&\quad -2X_2\left(X_1+\frac{2m}{3}X_2\right)\left((X_1-X_2)H\left(G+\frac{1}{n} (1-H^2)-1\right)+R_1-R_2\right)\\
&=X_1\left(R_2-\frac{1}{n}(1-H^2)\right)H\left(G+\frac{1}{n} (1-H^2)-1\right)+X_1\left\langle\nabla \left(R_2-\frac{1}{n}(1-H^2)\right),V\right\rangle\\
&\quad -X_2\left(R_1-\frac{1}{n}(1-H^2)\right)H\left(G+\frac{1}{n} (1-H^2)-1\right)-X_2\left\langle\nabla \left(R_1-\frac{1}{n}(1-H^2)\right),V\right\rangle\\
&\quad -2\left(X_1+\frac{2m}{3}X_2\right)(X_1-X_2)X_2H\left(G+\frac{1}{n} (1-H^2)-1\right)-2\left(X_1+\frac{2m}{3}X_2\right)(X_1-X_2)\left(R_2-\frac{1}{n} (1-H^2)\right)\\
&\quad -2X_2(X_1-X_2)\left(\left(X_1+\frac{2m}{3}X_2\right)H\left(G+\frac{1}{n} (1-H^2)-1\right)+R_1+\frac{2m}{3}R_2-\left(1+\frac{2m}{3}\right)\frac{1}{n}(1-H^2)\right)\\
&\quad -2X_2\left(X_1+\frac{2m}{3}X_2\right)\left((X_1-X_2)H\left(G+\frac{1}{n} (1-H^2)-1\right)+R_1-R_2\right)\\
&=PH\left(G+\frac{1}{n} (1-H^2)-1\right)\\
&\quad +X_1\left\langle\nabla \left(R_2-\frac{1}{n}(1-H^2)\right),V\right\rangle -X_2\left\langle\nabla \left(R_1-\frac{1}{n}(1-H^2)\right),V\right\rangle\\
&\quad -2\left(X_1+\frac{2m}{3}X_2\right)(X_1-X_2)\left(R_2-\frac{1}{n} (1-H^2)\right)\\
&\quad -4X_2(X_1-X_2)\left(X_1+\frac{2m}{3}X_2\right)H\left(G+\frac{1}{n} (1-H^2)-1\right)\\
&\quad -2X_2(X_1-X_2)\left(R_1+\frac{2m}{3}R_2-\left(1+\frac{2m}{3}\right)\frac{1}{n}(1-H^2)\right) -2X_2\left(X_1+\frac{2m}{3}X_2\right)(R_1-R_2).
\end{split}
\end{equation}
The first term in the last line of the computation above is obtained by gathering the first, the third, and the fifth term from the second last line. From \eqref{eqn:_X1R2-X2R1}, the computation above continues as the following.
\begin{equation}
\label{eqn:_so much pain}
\begin{split}
&=PH\left(G+\frac{1}{n} (1-H^2)-1\right)\\
&\quad +2PH\left(G+\frac{1}{n} (1-H^2)\right)\\
&\quad +(X_1-X_2)\left((4Y^2-8mZ^2)X_2-((4m+8)YZ+6Z^2)X_1\right) -R_2X_1(3X_2-X_1)+2R_1X_2^2\\
&\quad +4X_2\left(X_1+\frac{2m}{3}X_2\right)(X_1-X_2)H\left(G+\frac{1}{n} (1-H^2)\right)\\
&\quad -2\left(X_1+\frac{2m}{3}X_2\right)(X_1-X_2)\left(R_2-\frac{1}{n} (1-H^2)\right)\\
&\quad -4X_2(X_1-X_2)\left(X_1+\frac{2m}{3}X_2\right)H\left(G+\frac{1}{n} (1-H^2)-1\right)\\
&\quad -2X_2(X_1-X_2)\left(R_1+\frac{2m}{3}R_2-\left(1+\frac{2m}{3}\right)\frac{1}{n}(1-H^2)\right)-2X_2\left(X_1+\frac{2m}{3}X_2\right)(R_1-R_2)\\
&=PH\left(3G+\frac{3}{n} (1-H^2)-1\right)\\
&\quad +(X_1-X_2)\left((4Y^2-8mZ^2)X_2-((4m+8)YZ+6Z^2)X_1\right) -R_2X_1(3X_2-X_1)+2R_1X_2^2\\
&\quad -2(X_1-X_2)\left(X_1+\frac{2m}{3}X_2\right)\left(R_2-\frac{1}{n} (1-H^2)\right)\\
&\quad +4X_2(X_1-X_2)\left(X_1+\frac{2m}{3}X_2\right)H\\
&\quad -2X_2(X_1-X_2)\left(R_1+\frac{2m}{3}R_2-\left(1+\frac{2m}{3}\right)\frac{1}{n}(1-H^2)\right)-2X_2\left(X_1+\frac{2m}{3}X_2\right)(R_1-R_2)\\
&=PH\left(3G+\frac{3}{n} (1-H^2)-1\right)\\
&\quad +(X_1-X_2)\left((4Y^2-8mZ^2)X_2-((4m+8)YZ+6Z^2)X_1\right)\\
&\quad -2(X_1-X_2)\left(X_1+\frac{2m}{3}X_2\right)\left(R_2-\frac{1}{n} (1-H^2)\right)\\
&\quad +4X_2(X_1-X_2)\left(X_1+\frac{2m}{3}X_2\right)H\\
&\quad -2X_2(X_1-X_2)\left(R_1+\frac{2m}{3}R_2-\left(1+\frac{2m}{3}\right)\frac{1}{n}(1-H^2)\right)\\
&\quad +R_2X_1(X_1-X_2)+2R_1X_2(X_2-X_1)+\frac{4m}{3}X_2(X_2R_2-X_2R_1)\\
&=PH\left(3G+\frac{3}{n} (1-H^2)-1\right)\\
&\quad +(X_1-X_2)\left((4Y^2-8mZ^2)X_2-((4m+8)YZ+6Z^2)X_1\right)\\
&\quad -2(X_1-X_2)\left(X_1+\frac{2m}{3}X_2\right)\left(R_2-\frac{1}{n} (1-H^2)\right)\\
&\quad +4X_2(X_1-X_2)\left(X_1+\frac{2m}{3}X_2\right)H\\
&\quad -2X_2(X_1-X_2)\left(R_1+\frac{2m}{3}R_2-\left(1+\frac{2m}{3}\right)\frac{1}{n}(1-H^2)\right)\\
&\quad +R_2X_1(X_1-X_2)+2R_1X_2(X_2-X_1)+\frac{4m}{3}X_2R_2(X_2-X_1)+\frac{4m}{3}X_2(X_1R_2-X_2R_1).
\end{split}
\end{equation}
Write the last term as 
\begin{equation}
\begin{split}
&\frac{4m}{3}X_2(X_1R_2-X_2R_1)\\
&=\frac{4m}{3}X_2\left(X_1\left(R_2-\frac{1}{n}(1-H^2)\right)-X_2\left(R_1-\frac{1}{n}(1-H^2)\right)-2X_2\left(X_1+\frac{2m}{3}X_2\right)(X_1-X_2)\right)\\
&\quad +\frac{4m}{3}X_2\left((X_1-X_2)\frac{1}{n}(1-H^2)+2X_2\left(X_1+\frac{2m}{3}X_2\right)(X_1-X_2)\right)\\
&=\frac{4m}{3}PX_2 +\frac{4m}{3}(X_1-X_2)X_2\left(\frac{1}{n}(1-H^2)+2X_2\left(X_1+\frac{2m}{3}X_2\right)\right).
\end{split}
\end{equation}
Then \eqref{eqn:_so much pain} continues as
\begin{equation}
\begin{split}
&=PH\left(3G+\frac{3}{n} (1-H^2)-1\right)\\
&\quad +(X_1-X_2)\left((4Y^2-8mZ^2)X_2-((4m+8)YZ+6Z^2)X_1\right)\\
&\quad -2(X_1-X_2)\left(X_1+\frac{2m}{3}X_2\right)\left(R_2-\frac{1}{n} (1-H^2)\right)\\
&\quad +4X_2(X_1-X_2)\left(X_1+\frac{2m}{3}X_2\right)H\\
&\quad -2X_2(X_1-X_2)\left(R_1+\frac{2m}{3}R_2-\left(1+\frac{2m}{3}\right)\frac{1}{n}(1-H^2)\right)\\
&\quad +R_2X_1(X_1-X_2)+2R_1X_2(X_2-X_1)+\frac{4m}{3}X_2R_2(X_2-X_1)\\
&\quad +\frac{4m}{3}PX_2 +\frac{4m}{3}(X_1-X_2)X_2\left(\frac{1}{n}(1-H^2)+2X_2\left(X_1+\frac{2m}{3}X_2\right)\right)\\
&=P\left(H\left(3G+\frac{3}{n} (1-H^2)-1\right)+\frac{4m}{3}X_2\right)+(X_1-X_2)Q.
\end{split}
\end{equation}

\subsection{Coefficients for $\tilde{B}_{m,\kappa}$ and $\tilde{Q}_{m,\kappa}$}
We list coefficients for $\tilde{B}_{m,\kappa}$ and $\tilde{Q}_{m,\kappa}$ in the following. 
\begin{itemize}
\item
$\tilde{B}_{m,\kappa}(x)=b_2x^2+b_1x+b_0$
\\
\begin{equation}
\label{eqn:_coefficients for tildeB}
\begin{split}
b_2&=-\frac{(96 \kappa^3 m^3+264 \kappa^2 m^2+216 \kappa m+54)}{(1-\kappa)(2\kappa^2m(8m-5)+6\kappa(4m-1)+9)}<0,\\
b_1&=\frac{4(m+2)(4 \kappa m+3) (2 \kappa^2 m+4 \kappa m+3)}{(1-\kappa)(2\kappa^2m(8m-5)+6\kappa(4m-1)+9)}-\frac{2 (4m+3)^2 (2 m+3) (m-1)}{m(2m+1)(8m+3)},\\
b_0&=-\frac{3 (4+(4m-2) \kappa) (4\kappa m+3) \kappa}{(1-\kappa)(2\kappa^2m(8m-5)+6\kappa(4m-1)+9)}<0.
\end{split}
\end{equation}
\item
$\tilde{Q}_{m,\kappa}(x)=q_2x^2+q_1x+q_0$
\\
\begin{equation}
\label{eqn:_coefficients for tildeQ}
\begin{split}
q_2&=-\frac{4(2\kappa m+3)(32\kappa^3m^3+\kappa^2m^2(96-68\kappa)+\kappa m(90-84\kappa)+27(1-\kappa))}{3(1-\kappa)(2\kappa^2m(8m-5)+6\kappa(4m-1)+9)}<0,\\
q_1&=\frac{16 \kappa(m+2)(16 \kappa^3 m^3-18 \kappa^3 m^2+32  \kappa^2 m^2-24  \kappa^2 m+27 \kappa m-9 \kappa+9)}{3(1- \kappa)(2\kappa^2m(8m-5)+6\kappa(4m-1)+9)}>0,\\
q_0&=-\frac{4\kappa^2(32\kappa^2m^3-20\kappa^2m^2-6\kappa^2m+48\kappa m^2-6\kappa m-9\kappa+18m+9)}{3(1-\kappa)(2\kappa^2m(8m-5)+6\kappa(4m-1)+9)}<0.
\end{split}
\end{equation}
\end{itemize}

\begin{proposition}
\label{prop: some property of B}
For any $(m,\kappa)\in [1,\infty)\times (0,1)$, the function $\tilde{B}_{m,\kappa}$ has a real root $\sigma(m,\kappa)$ in the interval $\left(0,\frac{m\kappa}{3+2m\kappa}\right)$. For $m=1$, polynomials $\tilde{B}_{1,\kappa}$ and $\tilde{Q}_{1,\kappa}$ share a common real root $\sigma(1,\kappa)=\frac{\kappa}{3+2\kappa}$. 
\end{proposition}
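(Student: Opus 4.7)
The plan is to treat the two assertions separately via elementary analysis of the quadratic $\tilde{B}_{m,\kappa}(x)=b_2x^2+b_1x+b_0$. First I would verify directly from \eqref{eqn:_coefficients for tildeB} that $b_2<0$ and $b_0<0$ for every $(m,\kappa)\in[1,\infty)\times(0,1)$: the numerators are manifestly positive, and the common denominator $(1-\kappa)(2\kappa^2m(8m-5)+6\kappa(4m-1)+9)$ is also positive because $8m-5,\,4m-1>0$ for $m\geq 1$. Hence $\tilde{B}_{m,\kappa}$ opens downward and $\tilde{B}_{m,\kappa}(0)=b_0<0$. By the intermediate value theorem, existence of a real root in the open interval $\bigl(0,\tfrac{m\kappa}{3+2m\kappa}\bigr)$ (for $m>1$) will follow once I show $\tilde{B}_{m,\kappa}\!\bigl(\tfrac{m\kappa}{3+2m\kappa}\bigr)>0$; since the parabola opens downward, this root is automatically the smaller of the two real roots, as required by the application in Proposition \ref{prop: Q is nonnegative}.

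The key positivity estimate is proved by first establishing part (2) and then factoring. Substituting $x_0=\tfrac{m\kappa}{3+2m\kappa}$ and clearing the common denominator $m(2m+1)(8m+3)(1-\kappa)(2\kappa^2m(8m-5)+6\kappa(4m-1)+9)(2m\kappa+3)^2$, the expression $\tilde{B}_{m,\kappa}(x_0)$ becomes a polynomial $N(m,\kappa)$ times a positive factor. Because $\tilde{B}_{1,\kappa}\bigl(\tfrac{\kappa}{3+2\kappa}\bigr)=0$ identically in $\kappa$ (the second claim), the polynomial $N(m,\kappa)$ is divisible by $(m-1)$. Writing $N(m,\kappa)=(m-1)\tilde{R}(m,\kappa)$, the task reduces to showing $\tilde{R}(m,\kappa)>0$ on $[1,\infty)\times(0,1)$. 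I expect that, after expansion, $\tilde{R}$ regroups as a polynomial in the non-negative variables $(m-1)$ and either $\kappa$ or $1-\kappa$ whose coefficients are all non-negative (and at least one strictly positive), giving the strict inequality by inspection.

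For the second claim, the cleanest proof is geometric rather than algebraic. From the proof of Proposition \ref{prop: S1 for m=1}, the $\spin(7)$ trajectory $\gamma_\infty$ satisfies $X_1=Y-2Z$, $X_2=3Z$, and the polynomials $A$ and $P$ vanish identically along it; by Proposition \ref{prop: Q is nonnegative}, $Q$ also vanishes along $\gamma_\infty$ for $m=1$. Since for $m=1$ the polynomial $B$ reduces to $\tfrac{1}{n}(1-H^2)$ (the second term of $B$ carries the factor $m-1$), and since $\gamma_\infty$ lies in the Ricci-flat invariant set $\{H=1\}$, we also have $B=0$ along $\gamma_\infty$. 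Parametrizing the slice $\mathcal{L}_\kappa=\{X_2=\kappa X_1\}$, the conditions $X_2=3Z$ and $X_1=Y-2Z$ force $\tfrac{Z}{Y}=\tfrac{\kappa}{3+2\kappa}$, so $\tilde{B}_{1,\kappa}\bigl(\tfrac{\kappa}{3+2\kappa}\bigr)=\tilde{Q}_{1,\kappa}\bigl(\tfrac{\kappa}{3+2\kappa}\bigr)=0$ as claimed. A purely algebraic cross-check, plugging $m=1$ and $x=\tfrac{\kappa}{3+2\kappa}$ into \eqref{eqn:_coefficients for tildeB} and \eqref{eqn:_coefficients for tildeQ} and clearing denominators, is also immediate.

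The main obstacle is the polynomial positivity of $\tilde{R}(m,\kappa)$: the numerator $N(m,\kappa)$ has substantial total degree in both variables, and exhibiting a transparent non-negative regrouping of $\tilde{R}$ will likely be the longest computational step. A viable fallback, if the direct regrouping is unwieldy, is to verify $\tilde{R}(1,\kappa)>0$ for $\kappa\in(0,1)$ and then establish $\partial_m\tilde{R}(m,\kappa)\geq 0$ on the domain, which together force $\tilde{R}>0$ throughout $[1,\infty)\times(0,1)$.
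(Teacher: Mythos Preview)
Your approach is essentially the paper's: evaluate $\tilde{B}_{m,\kappa}$ at $0$ and at $x_0=\tfrac{m\kappa}{3+2m\kappa}$, factor out $(m-1)$ from the latter, and verify the residual factor is positive on $[1,\infty)\times(0,1)$; for $m=1$ the paper simply factors $\tilde{B}_{1,\kappa}$ and $\tilde{Q}_{1,\kappa}$ explicitly, exhibiting the common linear factor $(2\kappa+3)x-\kappa$. Two small cautions: the residual polynomial (the paper's $B_*$) does \emph{not} have all non-negative coefficients in $(m,\kappa)$, so your hoped-for regrouping will not work as stated, but each coefficient of $\kappa^j$ is an explicit polynomial in $m$ that is easily seen to be positive for $m\geq 1$, so the verification is still routine; and your geometric route to part~(2) via Proposition~\ref{prop: Q is nonnegative} is mildly circular since that proposition already invokes the present one, so you should rely on the direct algebraic substitution you mention at the end.
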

\begin{proof}
From \eqref{eqn:_coefficients for tildeB}, computations show that
\begin{equation}
\begin{split}
\tilde{B}_{m,\kappa}(0)&=b_0<0,\\
\tilde{B}_{m,\kappa}\left(\frac{m\kappa}{3+2m\kappa}\right)&=\frac{768(m-1)\kappa B_*}{(8  m + 3)  (2  \kappa  m + 3)  (2  m + 1)  (1 - \kappa)  (2\kappa^2m(8m-5)+6\kappa(4m-1)+9)}
\end{split}
\end{equation}
where
\begin{equation}
\begin{split}
B_*&=\left(m^4 + \frac{15}{8}  m^3 + \frac{5}{16} m^2 - \frac{45}{32} m - \frac{45}{64}\right) m\kappa^3\\
&\quad +\left(\frac{13}{4} m^4 +  \frac{163}{32} m^3 + \frac{255}{64} m^2 + \frac{9}{16}  m - \frac{27}{64}\right)  \kappa^2 \\
&\quad +\left( \frac{21}{8} m^3 + \frac{147}{64} m^2 + \frac{243}{128} m + \frac{27}{32}\right)  \kappa \\
&\quad + \frac{21}{32}m^2 - \frac{27}{128}\\
&>0
\end{split}
\end{equation}
for any $(m,\kappa)\in [1,\infty)\times (0,1)$.
Hence such a $\sigma(m,\kappa)$ exists. Furthermore, we have 
\begin{equation}
\begin{split}
\tilde{B}_{1,\kappa}(x)&=\frac{2(3+4\kappa)((2\kappa+1)x-\kappa-2)((2\kappa+3)x-\kappa)}{(2\kappa^2+6\kappa+3)(\kappa-1)},\\
\tilde{Q}_{1,\kappa}(x)&=-\frac{4((12\kappa^3-4\kappa^2-21\kappa-9)x+2 \kappa^3+11 \kappa^2+9 \kappa)((2\kappa+3)x-\kappa)}{3(2\kappa^2+6\kappa+3)(\kappa-1)}.
\end{split}
\end{equation}
The proof is complete.
\end{proof}

\subsection{Non-positivity of $r(\tilde{Q}_{m,\kappa},\tilde{B}_{m,\kappa})$}
\begin{proposition}
\label{prop: resultant is non-positive}
The resultant $r(\tilde{Q}_{m,\kappa},\tilde{B}_{m,\kappa})$ in \eqref{eqn:_resultant QB} is non-positive for any $(m,\kappa)\in [1,\infty)\times (0,1)$ and vanishes if and only if $m=1$.
\end{proposition}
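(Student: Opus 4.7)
The plan is to reduce the claim to an explicit polynomial positivity statement for $\tilde{r}$ and then verify it by a coefficient-level computation. First, I would check that the prefactor of $\tilde{r}$ in \eqref{eqn:_resultant QB}, namely $-\frac{64\kappa^2(m-1)(2\kappa m+3)}{(8m+3)^2(2m+1)^2 m^2 (1-\kappa)(2\kappa^2 m(8m-5) + 6\kappa(4m-1) + 9)^2}$, has the sign of $-(m-1)$ on $(m,\kappa)\in[1,\infty)\times(0,1)$. The only delicate point is the non-vanishing of the denominator factor $2\kappa^2 m(8m-5) + 6\kappa(4m-1) + 9$, which is immediate for $m\geq 1$, $\kappa\geq 0$ since all three summands are non-negative and the constant is $9$. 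Hence $r \leq 0$ with equality only at $m=1$ is equivalent to the statement $\tilde{r}(m,\kappa) > 0$ on $(m,\kappa)\in(1,\infty)\times(0,1)$; the vanishing of $r$ at $m=1$ is already carried by the $(m-1)$ factor in the prefactor, so no constraint on $\tilde{r}(1,\kappa)$ itself is required.

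Second, I would substitute $m = 1+s$ with $s \geq 0$ and expand $\tilde{r}(1+s,\kappa)$ as a polynomial in $(s,\kappa)$. To capture non-negativity uniformly over $\kappa\in[0,1]$, I would then pass to a tensor-product Bernstein form in $\kappa$, writing $\tilde{r}(1+s,\kappa) = \sum_{i=0}^{10}\sum_{j=0}^{4} c_{ij}\, s^i \binom{4}{j}\kappa^j(1-\kappa)^{4-j}$ (equivalently, substitute $\kappa=1-\tau$ and collect coefficients of $s^i \tau^j$, redistributing the binomial weights). If every $c_{ij}$ is non-negative, with at least one $c_{0j}$ strictly positive for each $\kappa\in(0,1)$ witness point, then $\tilde{r} > 0$ on $[0,\infty)\times(0,1)$ in the $(s,\kappa)$ coordinates, which is exactly what is needed.

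The main obstacle is purely computational: $\tilde{r}$ has bidegree $(10,4)$, so there are up to $55$ Bernstein coefficients to organize and sign-check, and the verification is realistically only feasible in a computer algebra system. The written proof will essentially amount to displaying the resulting coefficient table. Structurally, positivity is very plausible: the leading coefficient $262144\,m^{10}\kappa^4$ is manifestly positive; the $\kappa^0$-row $54432\, m^6 + 92016\, m^5 + 199584\, m^4 + 375192\, m^3 + 333882\, m^2 + 133407\, m + 19683(1-\kappa)$ is visibly positive on $m\geq 1$, $\kappa\in[0,1]$; and the mixed-sign middle coefficients of $\kappa^3$ and $\kappa^4$ at the lower powers of $m$ are precisely the ones expected to be absorbed by the Bernstein rewriting after expanding about $m=1$. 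The only genuine risk is that some $c_{ij}$ turns out negative, in which case I would refine by subdividing the $(s,\kappa)$ box via a further substitution such as $s = \sigma/(1-\sigma)$ or by splitting off an explicit sum-of-squares contribution; but given the structural evidence just described, I expect a direct Bernstein certificate for $\tilde{r}(1+s,\kappa)$ to suffice.
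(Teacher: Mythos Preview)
Your reduction is correct and matches the paper's first step: the prefactor has the sign of $-(m-1)$ on $[1,\infty)\times(0,1)$, so the claim is equivalent to $\tilde{r}(m,\kappa)>0$ for $m>1$, $\kappa\in(0,1)$. Where you diverge is in the positivity argument for $\tilde{r}$. The paper does not pass to a Bernstein basis; instead it observes that the coefficient of $m^i$ (as a polynomial in $\kappa$) is positive on $(0,1)$ for each $i\geq 5$, and then uses $m^i\geq m^{i-1}$ for $m\geq 1$ to iteratively collapse the degree in $m$: first replace $m^{10},\dots,m^5$ by $m^4$, combine, check the new $m^4$-coefficient is positive on $(0,1)$, drop to $m^3$, and so on down to degree zero. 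Each step is a single univariate check on a degree-$4$ polynomial in $\kappa$, done by inspection. Your Bernstein/shift approach is sound and more algorithmic, and would produce a clean machine-verifiable certificate, but it requires computing and sign-checking on the order of $55$ coefficients and, as you note, may need subdivision; the paper's ad hoc degree reduction is cruder but needs only a handful of easy univariate positivity checks and no computer algebra.
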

\begin{proof}
Recall that 
\scriptsize
\begin{equation}
\label{eqn:_resultant QB}
\begin{split}
r(\tilde{Q}_{m,\kappa},\tilde{B}_{m,\kappa})&=-\frac{64\kappa^2(m-1)(2\kappa m+3)}{(8m+3)^2(2m+1)^2m^2(1-\kappa)(2\kappa^2m(8m-5)+6\kappa(4m-1)+9)^2}\tilde{r},\\
\tilde{r}&=262144 \kappa ^4 m^{10}+(516096 \kappa ^4+679936 \kappa ^3) m^9+(373760 \kappa ^4+1233920 \kappa ^3+675840 \kappa ^2) m^8\\
&\quad +(-275904 \kappa ^4+1151040 \kappa ^3+1143744 \kappa ^2+308160 \kappa ) m^7\\
&\quad +(-926496 \kappa ^4+248832 \kappa ^3+1432512 \kappa ^2+507456 \kappa +54432) m^6\\
&\quad +(-800496 \kappa ^4-1256256 \kappa ^3+1472688 \kappa ^2+857520 \kappa +92016) m^5\\
&\quad +(-281880 \kappa ^4-1525392 \kappa ^3+266328 \kappa ^2+1353024 \kappa +199584) m^4\\
&\quad +(-33048 \kappa ^4-644436 \kappa ^3-672624 \kappa ^2+957420 \kappa +375192) m^3\\
&\quad +(-90396 \kappa ^3-433026 \kappa ^2 +186624 \kappa +333882) m^2\\
&\quad +(-74358 \kappa ^2-59049 \kappa +133407) m+19683(1-\kappa).
\end{split}
\end{equation}
\normalsize
It is clear that the coefficient for $\tilde{r}$ is non-positive on $[1,\infty)\times (0,1)$ and vanishes if and only if $m=1$.

Consider the polynomial $\tilde{r}$. Since coefficients for $m^i$ is obviously positive for $k\in(0,1)$ if $i\geq 5$, we must have 
\scriptsize
\begin{equation}
\begin{split}
\tilde{r}&> 262144 \kappa ^4 m^4+(516096 \kappa ^4+679936 \kappa ^3) m^4+(373760 \kappa ^4+1233920 \kappa ^3+675840 \kappa ^2) m^4\\
&\quad +(-275904 \kappa ^4+1151040 \kappa ^3+1143744 \kappa ^2+308160 \kappa ) m^4\\
&\quad +(-926496 \kappa ^4+248832 \kappa ^3+1432512 \kappa ^2+507456 \kappa +54432) m^4\\
&\quad +(-800496 \kappa ^4-1256256 \kappa ^3+1472688 \kappa ^2+857520 \kappa +92016) m^4\\
&\quad +(-281880 \kappa ^4-1525392 \kappa ^3+266328 \kappa ^2+1353024 \kappa +199584) m^4\\
&\quad +(-33048 \kappa ^4-644436 \kappa ^3-672624 \kappa ^2+957420 \kappa +375192) m^3\\
&\quad +(-90396 \kappa ^3-433026 \kappa ^2 +186624 \kappa +333882) m^2\\
&\quad +(-74358 \kappa ^2-59049 \kappa +133407) m+19683(1-\kappa)\\
&=(-1132776\kappa ^4+532080\kappa ^3+4991112\kappa ^2+3026160\kappa +346032)m^4\\
&\quad +(-33048\kappa ^4-644436\kappa ^3-672624\kappa ^2+957420\kappa +375192)m^3\\
&\quad +(-90396\kappa ^3-433026\kappa ^2+186624\kappa +333882)m^2\\
&\quad +(-74358\kappa ^2-59049\kappa +133407)m+19683(1-\kappa)\\
&\geq (-1132776\kappa ^4+532080\kappa ^3+4991112\kappa ^2+3026160\kappa +346032)m^3\\
&\quad +(-33048\kappa ^4-644436\kappa ^3-672624\kappa ^2+957420\kappa +375192)m^3\\
&\quad +(-90396\kappa ^3-433026\kappa ^2+186624\kappa +333882)m^2\\
&\quad +(-74358\kappa ^2-59049\kappa +133407)m+19683(1-\kappa)\\
&=(-1165824\kappa^4-112356\kappa^3+4318488\kappa^2+3983580\kappa+721224)m^3\\
&\quad +(-90396\kappa^3-433026\kappa^2+186624\kappa+333882)m^2\\
&\quad +(-74358\kappa^2-59049\kappa+133407)m+19683(1-\kappa)\\
&\geq (-1165824\kappa^4-112356\kappa^3+4318488\kappa^2+3983580\kappa+721224)m^2\\
&\quad +(-90396\kappa^3-433026\kappa^2+186624\kappa+333882)m^2\\
&\quad +(-74358\kappa^2-59049\kappa+133407)m+19683(1-\kappa)\\
&=(-1165824\kappa^4-202752\kappa^3+3885462\kappa^2+4170204\kappa+1055106)m^2\\
&\quad +(-74358\kappa^2-59049\kappa+133407)m+19683(1-\kappa)\\
&> 0.
\end{split}
\end{equation}
\normalsize
Since $\tilde{r}$ is positive on $[1,\infty)\times (0,1)$, the proof is complete.
\end{proof}

\subsection{Visual Summaries}
We summarize Theorem \ref{thm: first Einstein metric}-\ref{thm: 8-sphere} with the following four sets of figures generated by Grapher, where integral curves presented are generated by the 4th order Runge--Kutta algorithm with step size 0.01 and the initial step is set in a neighborhood around $p_0^+$ or $p_1^+$. All figures are in the $X_1X_2Z$-space and the variable $Y$ is eliminated by \eqref{eqn:_new positive conservation law 1}.
\begin{figure}[h!] 
\centering
\begin{subfigure}{.32\textwidth}
  \centering 
  \includegraphics[clip,width=1\linewidth]{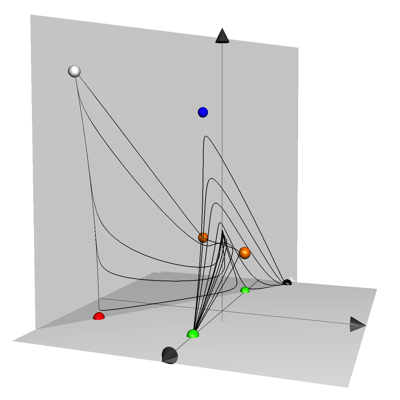}
    \caption{$\gamma_{s_1}$ for $m=1$}
\end{subfigure}
\begin{subfigure}{.32\textwidth}
  \centering
  \includegraphics[clip,width=1\linewidth]{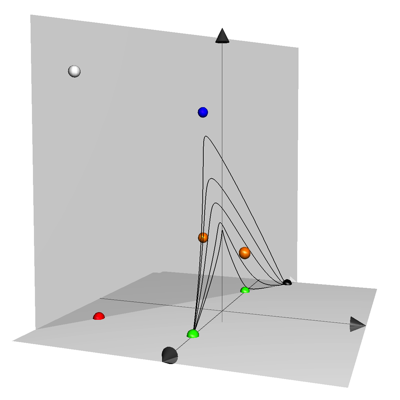}
    \caption{$s_1\in \left(0,s_\star\right)$}
\end{subfigure}
\begin{subfigure}{.32\textwidth}
  \centering
  \includegraphics[clip,width=1\linewidth]{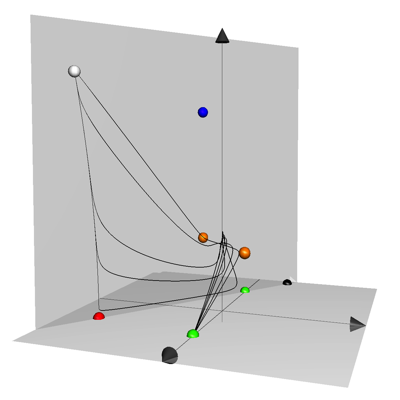}
    \caption{$s_2\in \left(s_\star,\infty\right)$}
\end{subfigure}
\caption{The set $\mathcal{S}$ is degenerate as shown in Proposition \ref{prop: S1 for m=1}. Hence only one integral curve $\gamma_{s_\star}$ is known to join $p_0^\pm$ (green) and it represents the Bohm's metric. Figures above indicate that $\gamma_{s_1}$ converges to $q_1^-$ (black) for $s_1\in (0,s_\star)$ and converges to $q_3^-$ (white) for $s_1\in (s_{\star},\infty)$.}
\label{fig: theorem 1.1-1.2-1}
\end{figure}
\begin{figure}[h!] 
\centering
\begin{subfigure}{.24\textwidth}
  \centering 
  \includegraphics[clip,width=1\linewidth]{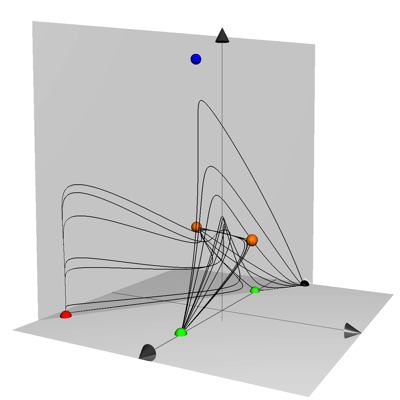}
    \caption{$\gamma_{s_1}$ for $m=2$}
\end{subfigure}
\begin{subfigure}{.24\textwidth}
  \centering
  \includegraphics[clip,width=1\linewidth]{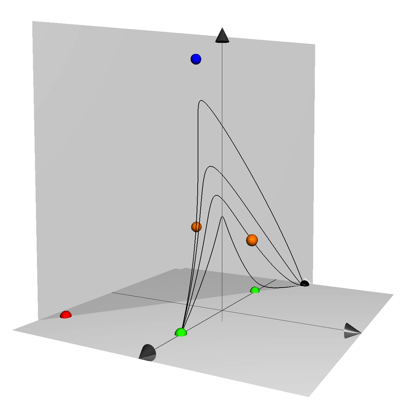}
    \caption{$s_1\in \left(0,s_\star\right)$}
\end{subfigure}
\begin{subfigure}{.24\textwidth}
  \centering
  \includegraphics[clip,width=1\linewidth]{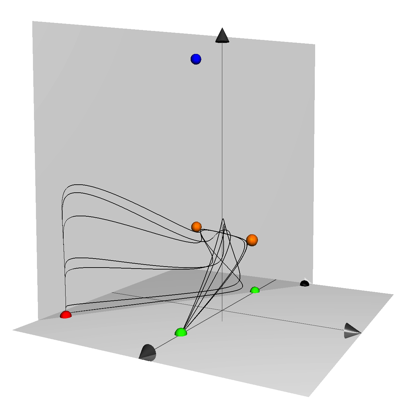}
    \caption{$s_2\in \left(s_\star,s_{\star\star}\right)$}
\end{subfigure}
\begin{subfigure}{.24\textwidth}
  \centering
  \includegraphics[clip,width=1\linewidth]{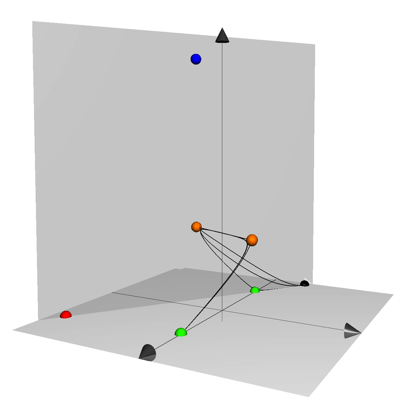}
    \caption{$s_2\in (s_{\star\star},\infty)$}
\end{subfigure}
\caption{Theorem \ref{thm: first Einstein metric} and Theorem \ref{thm: second Einstein metric} claim that for $m\geq 2$ there are at least two integral curves $\gamma_{s_\star}$ and $\gamma_{s_{\star\star}}$ that join $p_0^\pm$ (green). Figures above indicate that $\gamma_{s_1}$ converges to $q_1^-$ (black) for $s_1\in (0,s_\star)\cup (s_{\star\star},\infty)$ and converges to $q_2^-$ (red) for $s_1\in (s_{\star},s_{\star\star})$.}
\label{fig: theorem 1.1-1.2-2}
\end{figure}
\begin{figure}[h!] 
\centering
\begin{subfigure}{.24\textwidth}
  \centering 
  \includegraphics[clip,width=1\linewidth]{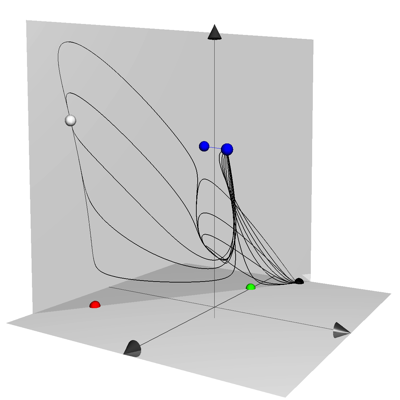}
    \caption{$\zeta_{s_2}$ for $m=1$}
\end{subfigure}
\begin{subfigure}{.24\textwidth}
  \centering
  \includegraphics[clip,width=1\linewidth]{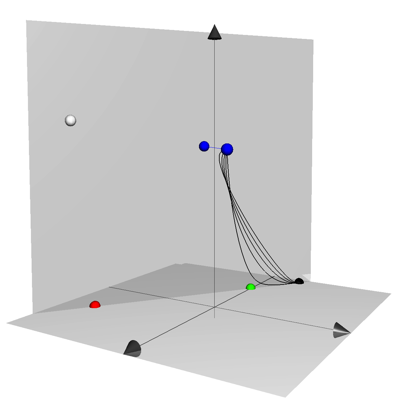}
    \caption{$s_2\in \left(0,\frac{1}{2m+6}\right)$}
\end{subfigure}
\begin{subfigure}{.24\textwidth}
  \centering
  \includegraphics[clip,width=1\linewidth]{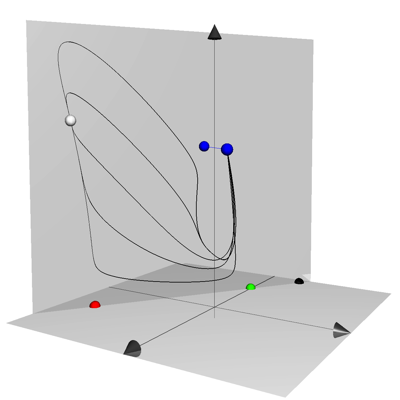}
    \caption{$s_2\in \left(\frac{1}{2m+6},s_\bullet\right)$}
\end{subfigure}
\begin{subfigure}{.24\textwidth}
  \centering
  \includegraphics[clip,width=1\linewidth]{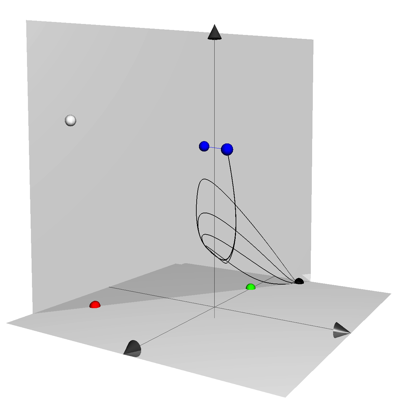}
    \caption{$s_2\in (s_\bullet,\infty)$}
\end{subfigure}
\caption{Theorem \ref{thm: 8-sphere} is realized in the plots above. For $m=1$, the graph of $\zeta_0$ is the straight line that joins $p_0^\pm$ (blue). As $s_2$ increases from $0$, the integral curve $\zeta_{s_2}$ converges to $q_1^-$ (black), until $\zeta_{\frac{1}{2m+6}}$ converges to $p_0^-$ (green). For $s_2>\frac{1}{2m+6}$, the integral curve $\zeta_{s_2}$ converges to $q_3^-$ (white), until $s_2=s_\bullet$ once again joins $p_0^\pm$. For the $s_2>s_\bullet$, the integral curve $\zeta_{s_2}$ converges again to $q_1^-$.}
\label{fig: theorem 1.3-1}
\end{figure}
\begin{figure}[h!] 
\centering
\begin{subfigure}{.32\textwidth}
  \centering 
  \includegraphics[clip,width=1\linewidth]{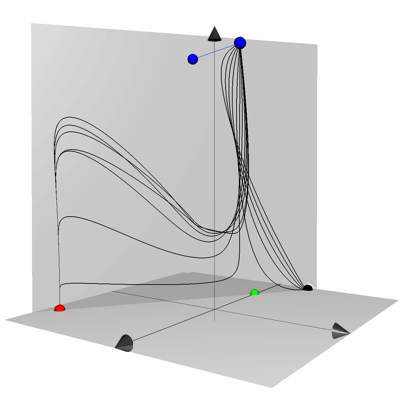}
    \caption{$\zeta_{s_2}$ for $m=1$}
\end{subfigure}
\begin{subfigure}{.32\textwidth}
  \centering
  \includegraphics[clip,width=1\linewidth]{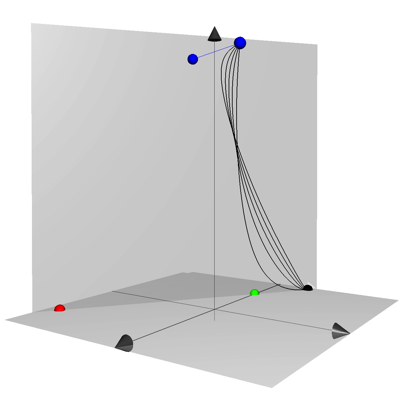}
    \caption{$s_2\in\left(0,\frac{1}{2m+6}\right)$}
\end{subfigure}
\begin{subfigure}{.32\textwidth}
  \centering
  \includegraphics[clip,width=1\linewidth]{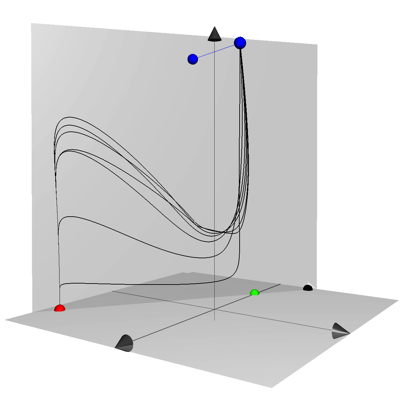}
    \caption{$s_2\in\left(\frac{1}{2m+6},\infty\right)$}
\end{subfigure}
\caption{For $m\geq 2$, the behavior of $\zeta_{s_2}$ is relatively simpler. For $s_2\in \left(0,\frac{1}{2m+6}\right)$, the integral curve $\zeta_{s_2}$ converges to $q_1^-$ (black). For $s_2>\frac{1}{2m+6}$, the integral curve $\zeta_{s_2}$ converges to $q_2^-$ (red).}
\label{fig: theorem 1.3-2}
\end{figure}
\newpage
\bibliography{PEMPO}
\bibliographystyle{alpha}
\end{document}